\theoremstyle{definition}
\numberwithin{equation}{subsection} 
\newtheorem{guess}{theorem}[subsection]
\newtheorem{rem}[guess]{Remark}
\newtheorem{thm}[guess]{Theorem}
\newtheorem{prop}[guess]{Proposition}
\newtheorem{Cor}[guess]{Corollary}
\newcommand{\cO}{\mathcal{O}}
\newcommand{\cG}{\mathcal{G}}
\newcommand{\cF}{\mathcal{F}}
\newcommand{\Pic}{\mathrm{Pic}}
\newcommand{\Spec}{\mathrm{Spec}}
\newcommand{\rank}{\mathrm{rank}}
\newcommand{\lra}{\longrightarrow}
\newcommand{\hra}{\hookrightarrow}
\newcommand{\ra}{\rightarrow}
\newcommand{\ol}{\overline}
\newcommand{\ms}{\mapsto}
\newcommand{\ul}{\underline}
\newcommand{\FF}{\mathbb{F}}
\newcommand{\PP}{\mathbb{P}}
\newcommand{\ZZ}{\mathbb{Z}}
\newcommand{\GG}{\mathbb{G}}
\newcommand{\QQ}{\mathbb{Q}}
\newcommand{\NN}{\mathbb{N}}
\newcommand{\XX}{\mathbb{X}}
\newcommand{\CC}{\mathbb{C}}
\newcommand{\Hom}{\mathrm{Hom}}
\begin{document}

\title{Brauer group of punctual Quot scheme of points on a smooth projective surface}

\author[A. J. Parameshwaran]{A. J. Parameshwaran}
\address{Tata Institute of Fundamental Research, Mumbai }
\email{param@math.tifr.res.in}

\author[Y. Pandey]{Yashonidhi Pandey}
\thanks{The support of Science and Engineering Research Board under Mathematical Research Impact Centric Support File number: MTR/2017/000229 is gratefully acknowledged.}
\address{ 
Indian Institute of Science Education and Research, Mohali Knowledge city, Sector 81, SAS Nagar, Manauli PO 140306, India}
\email{ ypandey@iisermohali.ac.in, yashonidhipandey@yahoo.co.uk}

\begin{abstract} Let $X$ be a smooth projective surface over an algebraically closed field $k$ such  that  $char(k) \neq 2$. Let $X^{[d]}$ denote the punctual Hilbert scheme of zero dimensional quotients of degree $d$ and $X^{(d)}$ denote the symmetric product of $X$. For $\ell \neq 2$, we give a formula for the $\ell$-primary part of the Brauer group of $X^{[2]}$. We show that the Hilbert to Chow morphism induces an isomorphism of cohomological Brauer groups for $d=2$ and a similar result for $d \geq 3$. Let $Q(r,d)$ denote the punctual Quot-scheme parametrising zero dimensional quotients of $\cO_X^{ \oplus r}$ of degree $d$. We show that the natural morphism from $Q(r,d) \ra X^{[d]}$ induces an isomorphism on cohomological Brauer groups. 

\end{abstract}
\subjclass[2000]{14F22,14D23,14D20}
\keywords{Brauer group, Quot-scheme}
\maketitle

\section{Introduction}
Let $k$ be an algebraically closed field of characteristic $p \neq 2$ and
 $X$ be a smooth projective surface over $k$. We denote by $X^{[d]}$  the punctual Hilbert scheme of zero dimensional quotients of degree $d$  and by $X^{(d)}$ the $d$-th symmetric product of $X$. Let $C_s(X)$ denote the group of symmetric divisorial correspondences (cf \cite[page 678]{fogarty}).  Fogarty \cite{fogarty} showed that the Picard group of $X^{[d]}$ is naturally isomorphic to $\Pic(X) \times C_s(X) \times \ZZ$. 
Our main result extends this to the description of the Brauer group using invariants of the surface $X$. We denote the Brauer group by $Br(-)$ and the cohomological Brauer group by $Br'(-)$.

In this paper, we assume that $\ell$ is a prime and $\ell \neq p$. For an abelian group $A$, by $A_{\ell^*}$ we mean the set of all elements of $A$ whose order is a pure power of $\ell$. Let $\omega_2: X^{[2]} \ra X^{(2)}$ denote the Hilbert to Chow morphism (\ref{hilbtochow}). We show in Theorem \ref{anychar} that $\omega_2$ induces an isomorphism $Br'(X^{(2)})_{\ell^*} \ra Br'(X^{[2]})_{\ell^*}$. 

Let $\ell$ be odd. Let $[X^{\times 2}/S_2]$ denote the stack quotient of the product of $X$ with itself, $X^{\times 2}$, by the symmetric group $S_2$.  In \S \ref{cohbrgpsym2}, we show that the natural morphisms \begin{eqnarray}
Br'([X^{\times 2}/S_2])_{\ell^*} \ra Br'(X^{\times 2})^{S_2}_{\ell^*} \\
Br'(X^{(2)})_{\ell^*} \ra Br'([X^{\times 2}/S_2])_{\ell^*}
\end{eqnarray}
are isomorphisms.
Let $NS(-)$ denote the N\'eron-Severi group. Set  \begin{equation} 
a:= b_2+b_1^2- rank NS(X)- rank NS(Alb(X)) 
\end{equation} 
where $b_i$ are the $i$-th Betti numbers.
We have 
\begin{eqnarray*} Br(X^{[2]})_{\ell^*}=  (\QQ_{\ell}/\ZZ_{\ell})^{\oplus a} \oplus  H^3(X, \ZZ_{\ell})_{\ell^*} \oplus [H^1(X,\ZZ_{\ell}) \otimes H^2(X, \ZZ_{\ell})]_{\ell^*} \\ \oplus  Tor_1^{\ZZ_{\ell}}(H^1(X,\ZZ_{\ell}),H^3(X,\ZZ_{\ell}))_{\ell^*} \oplus Tor_1^{\ZZ_{\ell}}(H^2(X,\ZZ_{\ell}),H^2(X,\ZZ_{\ell}))_{\ell^*}
\end{eqnarray*} 
 We refer the reader to \S \ref{cohbrgpsym2} for the case $\ell=2$.

Over complex numbers, a sequence (cf Prop \ref{sch}) due to S.Schroer (cf \cite[Prop 1.1]{schroer}) expresses the analytic Brauer group of any complex analytic space as an extension of a finite torsion group by a divisible group. In the setting of complex numbers, methods more elementary than \'etale cohomology are also available. So it is natural to search for a proof using these. We give another proof in Theorem \ref{ansd=2} proving that $\omega_2$ induces an isomorphism of cohomological Brauer groups by showing that it does so separately on the torsion part and the divisible group part. Under the hyptothesis $H_1(X)_{tor}$ has no element of two torsion, we show
\begin{equation}
Br(X^{[2]})=(\QQ/\ZZ)^{\oplus a} \oplus H_1(X)_{tor}^{\oplus (b_1+1)} \oplus Tor_1^{\ZZ}(H^2(X,\ZZ),H^2(X,\ZZ))
\end{equation}

As a corollary we get that the Brauer group  of  $X^{[2]}$ for Godeaux and Catanese surfaces is $\ZZ/5\ZZ^{\oplus 2}$, it is  $(\QQ/\ZZ)^a$ where $a=22-NS(X)$ and $22-2 NS(X)$ for $K3$ surfaces and abelian surfaces respectively and it is trivial for the case of $\PP^2$, Hirzebruch and del Pezzo surfaces (cf Table \ref{table}). 

More generally, over the complex numbers, we can show that
\begin{equation}
Br(X^{[2]})=Br'(X^{(2)})=H_2(X^{(2)},\ZZ)_{tor} \oplus \QQ/\ZZ^{\oplus a}
\end{equation}
In \cite{milgram}, Milgram using results of A.Dold has expressed the homology of the symmetric product of a CW-complex $X$ in terms of the homology of $X$. In this sense, we describe the Brauer group of $X^{[2]}$ reducing it to $X^{(2)}$ which in principle can be computed using Milgram's result (cf Remark \ref{computation}). In Remarks \ref{cctotaro} and \ref{ccsteenrod}, for the case $k=\CC$ we cross-check our results with some consequenes of results of B.Totaro and N.Steenrod respectively.
 

Now we come to the  case of more than two points. In \S $3$, we replace $X^{(2)}$ and $\PP(\Omega_X)$ by more technical invariants. They all have the property that they are built out of the surface $X$. They are summarized in the diagram below where all squares are cartesian
\begin{equation} 
\xymatrix{
X^{[2]} \ar[d] & \PP(\Omega_X) \ar[d] \ar@{_{(}->}[l] & V_2 \ar[d] \ar@{.>}[l] \ar[r] \ar@/_1pc/[ll] & X^{[d]}_* \ar[d] \\
X^{(2)} & X \ar@{_{(}->}[l] & W_2 \ar[l] \ar[r] & X^{(d)}_*
}
\end{equation}
 Roughly speaking, the $d$-th symmetric product $X^{(d)}$ admits a natural stratification and $X^{(d)}_*$ is the union of the two largest stratas while $X^{[d]}_*$ is its inverse image in $X^{[d]}$ under the Hilbert-Chow morphism $\omega_d: X^{[d]} \ra X^{(d)}$.  We show in Theorem \ref{geq3} that $Br'(X^{(d)}_*)_{\ell^*} \ra Br'(X^{[d]}_*)_{\ell^*}$ is an isomorphism for all $\ell \neq p$.

Now let us mention an application of our results. Let $Q(r,d)$ denote the punctual Grothendieck Quot-scheme parametrizing zero dimensional quotients of degree $d$ of $\cO_X^{\oplus r}$.  Given a quotient $q: \cO_X^{\oplus r} \ra S$ in $Q(r,d)$, by considering $\ker(q)  \hra \cO_X^{\oplus r}$,  we can associate the quotient of $\wedge^r ker(q) \hra \cO_X$.  Thus we get a natural morphism
\begin{equation} \label{quothilb}
\phi: Q(r,d) \ra X^{[d]}.
\end{equation} 
In the last section, we show that $\phi$ induces an isomorphism on cohomological Brauer groups. The method of proof is inspired from \cite[Remark 6.3]{bidhhu}.




\section{Hilbert-Chow morphism induces isomorphism on cohomological Brauer group for the case of two points}
\subsection{Generalities on Brauer group of a space}
Let $Z$ be any scheme.  Let $P_1 \ra Z$ and $P_2 \ra Z$ be principal $PGL(r_1)$ and $PGL(r_2)$ bundles on $Z$ respectively. Consider the homomorphism
\begin{equation}
PGL(r_1) \times PGL(r_2) \ra PGL(r_1 r_2).
\end{equation}
Let $P_1 \otimes P_2$ denote the principal $PGL(r_1r_2)$ bundle obtained by means of associated constructions via the above homomorphism and the fibered product $P_1 \times_Z P_2$.

For a principal $GL(m)$ bundle $Q$, let $\PP(Q)$ denote the associated principal $PGL(m)$ bundle.
We say that two principal $PGL$ bundles  $P_1$ and $P_2$ are Brauer equivalent if there exist principal $GL$ bundles $Q_1$ and $Q_2$ such that $P_1 \otimes \PP(Q_1) \simeq P_2 \otimes \PP(Q_2)$ as principal $PGL$ bundles. The Brauer group $Br(Z)$ of a scheme $Z$ consists of equivalences classes of projective bundles \cite{biswasholla}. The trivial class consists of $PGL$ bundles induced by $GL$ bundles. The  group law is defined by tensoring as above. The inverse operation is defined by 
taking the dual projective bundle i.e the projective bundle whose transition functions  are inverse duals of the original bundle.

The cohomological Brauer group is denoted $Br'(Z)$. It is the torsion subgroup of $H^2_{\acute{e}t}(Z,\GG_m)$.  The Brauer group of a smooth quasi-projective variety coincides with the cohomological Brauer group by a theorem of O.Gabber \cite[de Jong]{dj}.  Therefore $Br(Z)=Br'(Z)$ for a smooth quasi-projective variety.

\subsection{Hilbert-Chow morphism} We assume that our base field $k$ is an algebraically closed field of arbitrary characteristic. 
Let $X^{[d]}$ denote the Hilbert scheme of torsion quotients of  $\cO_X$ degree $d$ on $X$. Let $X^{(d)}$ denote the symmetric product obtained by taking quotient of $X^{\times d}$ by the action of the symmetric group $S_d$. Let 
\begin{equation} \label{hilbtochow}
\omega_d: X^{[d]} \ra X^{(d)}
\end{equation}
denote the Hilbert to Chow  morphism. It sends a subscheme $W= \sum_{ p \in Supp(W)} \cO_{W,p}$  of $X$ to $\sum_{p \in Supp(Z)} l(\cO_{W,p}) p $ where $l(\cO_{W,p})$ denotes the length.

We now specialize to the case $d=2$. In this case the singular locus of $X^{(2)}$ is $X$
which itself is smooth.  Let $\Omega_X$ denote the cotangent bundle on $X$. In this paper, by projective bundle of a vector bundle $V$ on $Z$ we shall mean the space of rank one locally free quotients of $V$.

The natural closed diagonal embedding of $i: X \ra X^{(2)}$ gives rise to the following cartesian square over $k$ (\cite[Lemma 4.4]{fogarty})
\begin{equation} \label{diag2}
\xymatrix{
\PP(\Omega_X) \ar[r]_{j} \ar[d]^{\pi} & X^{[2]} \ar[d]^{\omega_2} \\
X \ar[r]^{i} & X^{(2)}
}
\end{equation}
Over an arbitrary algebraically closed field, by \cite[Fogarty page 668 Lemma 4.3]{fogarty} the reduced fibers of $\omega_2$ identify with $\PP^1$ over $X$. It has been shown recently that the fibers of $\omega_2$ are indeed reduced
\cite[Paul-Sebastian \S 3.3 and Prop 3.3.3]{ps}.
We set $U:=X^{(2)} \setminus X$ and $j: U \ra X^{(2)}$ the inclusion.

\begin{prop} \label{pushforwardbyomega2} The natural adjunction maps $\GG_m \ra \omega_{2,*} \omega_2^* \GG_m$ and $\mu_n \ra \omega_{2,*} \omega_2^* \mu_n$ are isomorphisms of \'etale sheaves. 
\end{prop}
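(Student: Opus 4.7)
Since $\GG_m$ and $\mu_n$ are representable étale sheaves whose pullbacks under any morphism are canonically identified with $\GG_m$ and $\mu_n$ on the target, the assertions reduce to showing that the adjunction maps $\GG_m \ra \omega_{2,*}\GG_m$ and $\mu_n \ra \omega_{2,*}\mu_n$ are isomorphisms of étale sheaves on $X^{(2)}$. I would verify each statement by checking it on stalks at geometric points $\bar y$ of $X^{(2)}$, exploiting that $\omega_2$ is proper with geometrically connected fibers: a single reduced point over $U = X^{(2)} \setminus X$, and a copy of $\PP^1_k$ over $\bar y$ lying on the diagonal $X$ (Fogarty's description together with reducedness of fibers from Paul--Sebastian).

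For $\mu_n$ (with $n$ invertible in $k$), proper base change for torsion sheaves applies and gives
\[
(\omega_{2,*}\mu_n)_{\bar y} \;=\; H^0_{\mathrm{et}}(\omega_2^{-1}(\bar y),\mu_n).
\]
In both fiber cases the geometric fiber is a connected $k$-scheme, so $H^0(\omega_2^{-1}(\bar y),\mu_n) = \mu_n(k) = (\mu_n)_{\bar y}$, and the adjunction is a stalkwise isomorphism. Over $U$ the map $\omega_2$ is an isomorphism, so the check is trivial there.

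For $\GG_m$, the key input is the coherent sheaf equality $\omega_{2,*}\cO_{X^{[2]}} = \cO_{X^{(2)}}$. This follows from Stein factorization / Zariski's Main Theorem: $\omega_2$ is proper and birational, $X^{(2)}$ is normal (being the quotient of the smooth $X^{\times 2}$ by $S_2$ in characteristic $\neq 2$), and the fibers are geometrically connected, so the finite $\cO_{X^{(2)}}$-algebra $\omega_{2,*}\cO_{X^{[2]}}$ is birational over a normal scheme and hence equal to $\cO_{X^{(2)}}$. Properness, birationality, normality, and geometric connectedness of fibers are all preserved by étale base change $V \ra X^{(2)}$, so the same argument run on $X^{[2]} \times_{X^{(2)}} V \to V$ yields $(\omega_2)_{V,*}\cO = \cO_V$ for every such $V$. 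Passing to units of global sections gives $\GG_m(X^{[2]} \times_{X^{(2)}} V) = \GG_m(V)$, which is precisely the statement that $\GG_m \ra \omega_{2,*}\GG_m$ is an isomorphism of étale sheaves.

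The one nontrivial ingredient is the equality $\omega_{2,*}\cO_{X^{[2]}} = \cO_{X^{(2)}}$, which is where Fogarty's structure result and the normality of the symmetric square enter; everything else is a direct application of proper base change and the representability of $\GG_m, \mu_n$. I do not anticipate a serious obstacle beyond verifying this coherent pushforward identity and its stability under étale base change.
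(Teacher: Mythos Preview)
Your proof is correct, and it takes a genuinely different route from the paper's. The paper argues via the gluing (``decomposition'') theorem for \'etale sheaves along the open/closed decomposition $U \hookrightarrow X^{(2)} \hookleftarrow X$: it checks that the adjunction maps become isomorphisms after applying $i^*$ (using the explicit $\PP^1$-bundle structure $\PP(\Omega_X) \to X$ to compute $\Gamma(\PP(\Omega_X \times_X V))^\times = \Gamma(V)^\times$ and similarly for $\mu_n$) and after restricting to $U$ (where $\omega_2$ is an isomorphism), then concludes that the two sheaves have the same gluing data and hence agree. Your approach instead handles $\mu_n$ by a direct stalk computation via proper base change, and handles $\GG_m$ by invoking normality of $X^{(2)}$ together with Zariski's Main Theorem to get $\omega_{2,*}\cO = \cO$, checked \'etale-locally. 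Your argument is more conceptual and transports more readily to other proper birational maps onto normal targets; the paper's argument is more hands-on and has the minor virtue of treating $\GG_m$ and $\mu_n$ uniformly without invoking normality of the symmetric square.
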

\begin{proof} Now $\omega_2^* \GG_m$ (resp $\omega_2^* \mu_n$) identifies with the sheaf defined by $\GG_m$ (resp. $\mu_n$) on $X^{[2]}$. Here below after using this indentification, we get the following commutative diagram where all vertical arrows are obtained from adjunction $Id \ra \omega_{2,*} \omega_2^*$ and the horizontal by $Id \ra j_* j^*$:
\begin{equation} \label{decompositiondiag}
\xymatrix{
i^* \GG_m \ar[r] \ar[d] & i^* j_* j^* \GG_m \ar[d] \\
i^* \omega_{2,*} \GG_m \ar[r] & i^* j_* j^* \omega_{2,*} \GG_m
} \quad
\xymatrix{
i^* \mu_n \ar[r] \ar[d] & i^* j_* j^* \mu_n \ar[d] \\
i^* \omega_{2,*} \mu_n \ar[r] & i^* j_* j^* \omega_{2,*} \mu_n 
}
\end{equation}

Now the left vertical arrows are an isomorphism because  $i^* \omega_{2,*} \GG_m$ (resp. $i^* \omega_{2,*} \mu_n$) identifies with $\GG_m$ (resp. $\mu_n$) too because for any \'etale open $V \ra X$ we have 
\begin{eqnarray*} i^* \omega_{2,*} \GG_m(V)= \GG_m(\PP(\Omega_X \times_X V))=\Gamma(\PP(\Omega_X \times_X V))^{\times} = \Gamma(V)^{\times}= \GG_m(V)=i^* \GG_m(V) \\
i^* \omega_{2,*} \mu_n(V)= \mu_n(\PP(\Omega_X \times_X V))=\mu_n(\Gamma(\PP(\Omega_X \times_X V))) = \mu_n(\Gamma(V))= \mu_n(V)=i^* \mu_n(V)
\end{eqnarray*}
Since $\omega_2^{-1} (U) \ra U$ is an isomorphism, so the second vertical arrow is an isomorphism. Thus we see that the sheaves $\GG_m$ and $\omega_{2,*} \GG_m$ on $X^{(2)}$ have the same mapping cylinder of the left exact additive functor $i^* j_*:  \tilde{U}_{\acute{e}t} \ra \tilde{X}_{\acute{e}t}$ (cf \cite[page 136]{tamme}) where $\tilde{U}_{\acute{e}t}$ denotes the category of sheaves on the \'etale site of $U$. By the decomposition theorem \cite[Theorem 8.1.7]{tamme}, the \'etale sheaves $\omega_{2,*} \GG_m$ and $\GG_m$ are isomorphic. The same argument also shows that $\omega_{2,*} \mu_n$ is isomorphic to $\mu_n$.

\end{proof}

\subsection{Neron-Severi group of $X^{[2]}$ and $X^{(2)}$}
For a scheme $Z$ over a field $k$, let $\Pic(Z)$ denote the group of line bundles on $Z$. Let $\Pic^0(Z)$ denote the Picard scheme of $Z$ parametrizing algebraically trivial line bundles. Let \begin{equation}
NS(Z):=\Pic(Z)/\Pic^0(Z)(k)
\end{equation} denote the N\'eron-Severi group of $Z$.

We recall some results from \cite[\S 2, page 79-80]{knop} which hold over an algebraically closed field of arbitrary characteristic. Let an algebraic group $G$ act on an irreducible variety $X$. Let $L$ be a line bundle on $X$. A $G$-linearization on $L$ is a lift of $G$-action on $X$ to $L$ which is linear on the fibers. A $G$-line bundle is a line bundle on $X$ together with a $G$-linearization. Let $\Pic_G(X)$ denote the set of isomorphism classes of $G$-line bundles on $X$. By \cite[Lemma 2.2, page 80]{knop}  there is an exact sequence
\begin{equation} \label{forgetlin}
0 \ra H^1(G,\cO(X)^{\times}) \ra \Pic_G(X) \stackrel{forget}{\lra} \Pic(X)
\end{equation}

Now assume that $\pi: X \ra X//G$ admits a quotient. Then by \cite[\S 5, page 85]{knop}, we further have
\begin{equation} \label{lbquotlin}
1 \ra \Pic(X// G) \ra \Pic_G(X) \ra \prod_{x \in C} \XX^* (G_x)
\end{equation}
where $C$ is a set of representatives of closed orbits in $X$ and $G_x$ is the isotropy subgroup at $x$ and $\XX^*(G_x)$ is the group of characters. We also have the following exact sequence by \cite[\S 5.1]{knop}
\begin{equation} \label{5.1}
 E(X//G) \hra E(X)^G \ra \XX^*(G) \ra H^1(G, \cO(X)^{\times} ) \ra H^1(G/G^0,E(X)) 
\end{equation}
where $G^0$ is the connected component of the neutral element, $E(X)=\cO(X)^{\times}/k^{\times}$. 

Although these results are stated over an algebraically closed field of characteristic zero but we want to apply them to our situation of $\pi: X^{\times 2} \ra X^{(2)}$ where these hold for characteristic other  than two.

We need the following proposition for the case $d=2$ in this section but $d \geq 3$ in \S \ref{dgeq3}. So we state and prove it in general.
\begin{prop} \label{A1} Let $char(k) \neq 2$. We have  
\begin{eqnarray} 
\label{pichilbsym2} \label{1} \Pic(X^{(d)})=\Pic(X^{\times d})^{S_d} \\
 \label{2} \Pic^0(X^{(d)})=\Pic^0(X) \\
\label{3} NS(X^{[d]})=NS(X^{(d)}) \oplus \ZZ.
\end{eqnarray}
\end{prop}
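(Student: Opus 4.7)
The proposition has three parts; I plan to treat them in order.

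For part (\ref{1}), my plan is to apply the two exact sequences (\ref{forgetlin}) and (\ref{lbquotlin}) to $G = S_d$ acting on $Y = X^{\times d}$ with quotient $\pi: Y \to X^{(d)}$. Since $X$ is smooth and projective, $\cO(Y)^\times = k^\times$ with trivial $S_d$-action, so the kernel in (\ref{forgetlin}) is $H^1(S_d, k^\times) = \Hom(S_d^{\mathrm{ab}}, k^\times) = \mu_2(k) = \ZZ/2$ (the hypothesis $\mathrm{char}(k) \neq 2$ enters here to guarantee this $2$-torsion is non-trivial and behaves well). The closed $S_d$-orbits in $Y$ are indexed by unordered $d$-tuples in $X$; at a point whose coordinates partition with multiplicities $(n_1, \ldots, n_r)$ the isotropy is $S_{n_1} \times \cdots \times S_{n_r}$, whose character group $\XX^*(G_x)$ is generated by sign characters on the factors and is thus a product of copies of $\ZZ/2$. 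I would then show that every $S_d$-invariant line bundle $L$ on $Y$ admits a linearization whose image in $\prod_x \XX^*(G_x)$ in (\ref{lbquotlin}) vanishes, so that $L$ lifts to $\Pic(X^{(d)})$; injectivity of the resulting map $\Pic(X^{(d)}) \to \Pic(Y)^{S_d}$ follows from the fact that $\pi$ is a surjective finite quotient, together with the kernel description from (\ref{forgetlin}).

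For part (\ref{2}), I would combine part (\ref{1}) with the K\"unneth decomposition $\Pic^0(X^{\times d}) = \Pic^0(X)^{\oplus d}$, on which $S_d$ permutes the summands; the invariants form the diagonal copy of $\Pic^0(X)$. Since $\pi^*$ is a morphism of Picard schemes and hence carries $\Pic^0(X^{(d)})$ into $\Pic^0(X^{\times d})^{S_d}$, and conversely every $S_d$-invariant algebraically trivial line bundle on $X^{\times d}$ descends (by part (\ref{1})) to a line bundle on $X^{(d)}$ which is again algebraically trivial, one obtains $\Pic^0(X^{(d)}) = \Pic^0(X)$. Equivalently one may argue via the Albanese, noting that the sum map identifies $\mathrm{Alb}(X^{(d)}) = \mathrm{Alb}(X)$ and dualizing.

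For part (\ref{3}), I use that the Hilbert--Chow morphism $\omega_d: X^{[d]} \to X^{(d)}$ is proper and birational, an isomorphism over the open locus of configurations of $d$ distinct points, with exceptional set a single prime divisor $E$ lying over the big diagonal $\Delta_d \subset X^{(d)}$ (irreducible because its generic part is a $\PP^1$-bundle over the stratum of a single coincidence). Since $X^{[d]}$ is smooth and $\omega_d$ contracts only $E$ in codimension one, the exceptional-divisor sequence gives $0 \to \ZZ \cdot [E] \to \Pic(X^{[d]}) \to \Pic(X^{[d]} \setminus E) \to 0$, and the identification $X^{[d]} \setminus E \cong X^{(d)} \setminus \Delta_d$, combined with the fact that $\Delta_d$ is of codimension two in the normal variety $X^{(d)}$, makes $\Pic(X^{(d)}) \hookrightarrow \Pic(X^{(d)} \setminus \Delta_d)$ injective with torsion cokernel. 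Passing to N\'eron--Severi groups and using part (\ref{2}) to ensure $\omega_d^*$ respects the $\Pic^0$ part, the independent class $[E]$ contributes the extra $\ZZ$ summand, yielding $NS(X^{[d]}) = NS(X^{(d)}) \oplus \ZZ$.

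The main obstacle will be part (\ref{1}): producing a linearization on each invariant $L$ whose image in $\prod_x \XX^*(G_x)$ vanishes demands a coherent choice across infinitely many closed orbits. The sign characters attached to the isotropy $S_{n_1} \times \cdots \times S_{n_r}$ can a priori be any combination, while twisting by the single non-trivial character of $S_d$ can only flip them simultaneously; making the obstruction vanish therefore requires exploiting the symmetric tensor structure of line bundles on $X^{\times d}$ and is exactly where the hypothesis $\mathrm{char}(k) \neq 2$ becomes essential.
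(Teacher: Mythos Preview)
Your overall strategy is sound, but the execution diverges from the paper in ways worth noting, and your acknowledged obstacle in part~(\ref{1}) is precisely where the paper takes a shortcut you are missing.

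For part~(\ref{1}), the paper does not attempt to kill the obstruction in the full product $\prod_{x \in C} \XX^*(G_x)$ orbit by orbit. Instead it invokes two facts. First, \cite[Lemma~6.1]{fogarty} already gives a surjection $\Pic(X^{(d)}) \to \Pic(X^{\times d})^{S_d}$, which immediately tells you that the forgetful map in (\ref{forgetlin}) is onto $\Pic(X^{\times d})^{S_d}$; you are effectively proposing to reprove this lemma by hand. Second, because the branch locus of $\pi: X^{\times d} \to X^{(d)}$ is connected, the product in (\ref{lbquotlin}) collapses to a single copy of $\XX^*(S_d)$, and the paper then checks surjectivity onto this single copy by sending the trivial bundle with the sign linearization to the sign character. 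These two inputs turn (\ref{forgetlin}) and (\ref{lbquotlin}) into two short exact sequences with the same middle and outer terms, compatibly split, and the identification $\Pic(X^{(d)}) = \Pic(X^{\times d})^{S_d}$ drops out. Your worry about coherently choosing linearizations across infinitely many orbits is thus bypassed entirely.

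For parts~(\ref{2}) and~(\ref{3}) your approaches are legitimate but different from the paper's. The paper leans on Fogarty throughout: it reads off $\Pic^0(X^{(d)}) = \Pic^0(X)$ from the structural decomposition $\Pic(X^{\times d})^{S_d} = \Pic(X) \times C_s(X)$, and for~(\ref{3}) it simply quotes Fogarty's computations $\Pic(X^{[d]}) = \Pic(X) \times C_s(X) \times \ZZ$ and $\Pic^0(X^{[d]}) \simeq \Pic^0(X)$, then compares with~(\ref{nsofsym}). Your K\"unneth argument for~(\ref{2}) and your exceptional-divisor argument for~(\ref{3}) are more geometric and self-contained, which is appealing; but in~(\ref{3}) be careful that $X^{(d)}$ is only normal, so $\Pic(X^{(d)}) \hookrightarrow \Pic(X^{(d)} \setminus \Delta_d)$ need not be an isomorphism a priori (you wrote ``torsion cokernel'' without justification). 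You would still need to show that $\omega_d^*\Pic(X^{(d)})$ and $\ZZ\cdot[E]$ together span $\Pic(X^{[d]})$ and meet trivially, which is exactly what Fogarty's theorem packages for you.
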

\begin{proof} By \cite[Lemma 6.1]{fogarty}, we have a surjection $\Pic(X^{(d)}) \ra \Pic(X^{\times d})^{S_d}$. This means that every line bundle in $\Pic(X^{\times d})^{S_d}$ admits at least one  linearization. The action of $S_d$ on $\Gamma(X^{\times d})=k$ is trivial. Now in (\ref{5.1}) we have $E(X^{\times d})=\cO(X^{\times d})^{\times}/k^{\times}=k^\times/k^\times$ is trivial. Therefore the natural map of (\ref{5.1})
\begin{equation}
\XX^*(S_d) \ra H^1(S_d, \cO(X^{\times d})^{\times})=H^1(S_d,k^{\times})
\end{equation}
is an isomorphism. Using these facts (\ref{forgetlin}) simplifies to give 
\begin{equation} \label{picwithlin} 0 \ra \XX^*(S_d) \ra \Pic_{S_d}(X^{\times d}) \ra \Pic(X^{\times d})^{S_d} \ra 0
\end{equation} 

Since the branch locus for $\pi: X^{\times d} \ra X^{(d)}$ is connected, so  (\ref{lbquotlin}) becomes simply the following exact sequence \begin{equation} \label{piccomparison}
\xymatrix{
1 \ra \Pic(X^{(d)}) \ra \Pic_{S_d}(X^{\times d}) \stackrel{\alpha}{\ra} \XX^*(S_d) \ar@{.>}[r]  & 1.
}
\end{equation}

By (\ref{forgetlin}), $H^1(S_d,\cO(X^{\times d})^{\times})$, which equals $\XX^*(S_d)$, identifies with the set of linearizations on the trivial line bundle over $X^{\times d}$. On the other hand, the trivial line bundle on $X^{\times d}$ with  linearization given by the non-trivial character in $\XX^*(S_d)$ maps to the non-trivial character on $S_d$. Therefore the above sequence is also surjective.  Therefore (\ref{picwithlin}) is split exact  using $\alpha$. So it follows that \begin{equation} \label{nsymtwo}  \Pic(X^{(d)})= \Pic(X^{\times d})^{S_d} \quad \text{and hence} \quad NS(X^{(d)})=NS(X^{\times d})^{S_d}.
\end{equation}
This shows (\ref{1}).

Recall that $C_s(X)$ is the group of symmetric divisorial classes (cf \cite[page 678]{fogarty} for the case of general $d$).
Further by \cite[(6'), Lemma 6.1]{fogarty}, it follows that 
\begin{equation} \label{picinvtwo}
\Pic(X^{\times d})^{S_d}=\Pic(X) \times C_s(X).
\end{equation} 
Thus \begin{equation} \Pic^0(X^{(d)})=\Pic^0(X) 
\end{equation}  
This shows (\ref{2}).
Further
\begin{equation} \label{nsofsym}
NS(X^{(d)})=NS(X^{\times d})^{S_d}= NS(X) \times C_s(X).
\end{equation}

On the other  hand by \cite[Theorem 6.2]{fogarty}, for any $d$ we have 
\begin{equation} \label{picxd} \Pic(X^{[d]})= \Pic(X) \times C_s(X) \times \ZZ
\end{equation}   Thus by (\ref{nsymtwo}), (\ref{picinvtwo}) and (\ref{picxd}) we get
\begin{equation} 
\Pic(X^{[d]}) = \Pic(X^{(d)}) \times \ZZ.
\end{equation}

By \cite[Theorem 5.4]{fogarty}, for a smooth projective surface over any field $k$, for any $d$, we have an isomorphism  \begin{equation} \label{isopic0} \Pic^0(X/k) \ra \Pic^0(X^{[d]}/k). \end{equation}
Therefore $NS(X^{[d]})=NS(X) \times C_s(X) \times \ZZ$. Thus by (\ref{nsofsym}) we get
\begin{equation*}
NS(X^{[d]})=NS(X^{(d)}) \oplus \ZZ.
\end{equation*}
This shows (\ref{3}).


\end{proof}
\begin{rem} By the proof of \cite[Theorem 6.2]{fogarty}, we see that for $d=2$ the copy of $\ZZ$ in (\ref{picxd}) comes from the divisor corresponding to $\PP(\Omega_X)$.
\end{rem}
\subsection{Comparison of Cohomological Brauer group in any characteristic}

Let $k$ be an algebraically closed field of characteristic $p$. Let $\ell \neq p$ be a prime. Let $A$ be an abelian group. For any $m \geq 1$, let $A_{\ell^m}$ denote the subgroup of $\ell^m$-torsion points of $A$. Let $A_{\ell^*}$ denote the set of all elements of $A$ of order a power of $\ell$.

\begin{thm} \label{anychar} Let $char(k) \neq 2$. The morphism $\omega_2:X^{[2]} \ra X^{(2)}$ induces an isomorphism of cohomological Brauer groups $Br'(X^{(2)})_{\ell^*} \ra Br(X^{[2]})_{\ell^*}$.
\end{thm}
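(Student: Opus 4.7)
The plan is to compare $H^2(X^{[2]}, \mu_{\ell^n})$ with $H^2(X^{(2)}, \mu_{\ell^n})$ via the Leray spectral sequence for $\omega_2$, deduce the statement for $\ell^n$-torsion of Brauer groups by the snake lemma applied to the Kummer sequences of $X^{[2]}$ and $X^{(2)}$ combined with Proposition \ref{A1}(3), and then take the colimit in $n$.

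First I would compute the higher direct images $R^q\omega_{2,*}\mu_{\ell^n}$. Proposition \ref{pushforwardbyomega2} gives $R^0\omega_{2,*}\mu_{\ell^n} = \mu_{\ell^n}$. Since $\omega_2$ is an isomorphism on $U = X^{(2)}\setminus X$, the sheaves $R^q\omega_{2,*}\mu_{\ell^n}$ for $q\ge 1$ are supported on the diagonal $X$. Applying proper base change to the cartesian square \eqref{diag2} (using that the fibres of $\omega_2$ are reduced $\PP^1$'s by Paul-Sebastian), one gets $i^* R^q\omega_{2,*}\mu_{\ell^n} \simeq R^q\pi_* \mu_{\ell^n}$ for $\pi: \PP(\Omega_X)\to X$. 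The cohomology of $\PP^1$ with $\mu_{\ell^n}$ coefficients (recall $\ell\neq p$) then gives vanishing for $q=1$ and the constant sheaf $\ZZ/\ell^n$ for $q=2$. Consequently in the Leray spectral sequence $E_2^{p,q} = H^p(X^{(2)}, R^q\omega_{2,*}\mu_{\ell^n}) \Rightarrow H^{p+q}(X^{[2]}, \mu_{\ell^n})$ the $E_2$-page is concentrated in rows $q=0$ and $q=2$, all $d_2$'s vanish, and the only potentially nonzero differential affecting $H^2$ is $d_3: H^0(X, \ZZ/\ell^n) \to H^3(X^{(2)}, \mu_{\ell^n})$. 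We obtain
\[ 0 \to H^2(X^{(2)}, \mu_{\ell^n}) \to H^2(X^{[2]}, \mu_{\ell^n}) \to \ker(d_3) \to 0. \]

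Next I would produce a splitting using the exceptional divisor $E = \PP(\Omega_X)\subset X^{[2]}$ from Proposition \ref{A1}(3). Its cycle class $c^{[2]}([E]) \in H^2(X^{[2]}, \mu_{\ell^n})$ is sent by the edge map to the class in $H^0(X, R^2\omega_{2,*}\mu_{\ell^n})=\ZZ/\ell^n$ determined by the restriction $\cO_{X^{[2]}}(E)|_{\PP^1}$ on a fibre of $\omega_2$; since this restriction has degree $\pm 1$ (the tautological line bundle on the exceptional fibre), it generates $H^2(\PP^1, \mu_{\ell^n})$. This forces $d_3 = 0$, identifies $\ker(d_3)$ with $\ZZ/\ell^n$, and splits the sequence above. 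Combining this with the Kummer sequences of $X^{[2]}$ and $X^{(2)}$ and the identification $\Pic(X^{[2]})/\ell^n = \Pic(X^{(2)})/\ell^n \oplus \ZZ/\ell^n\cdot[E]$ (which follows from Proposition \ref{A1}(3) since $\Pic^0$ is divisible), the extra $\ZZ/\ell^n$ summands on the Picard side and the $H^2$ side cancel, and the snake lemma yields $Br'(X^{(2)})[\ell^n] \simeq Br'(X^{[2]})[\ell^n]$. Passing to the colimit in $n$ gives the desired isomorphism on $\ell$-primary parts.

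The principal technical step is identifying the edge-map image of $c^{[2]}([E])$: this hinges on the local geometry of $\omega_2$ near the diagonal, where $E$ is the exceptional divisor and $\cO_{X^{[2]}}(E)$ restricts to $\cO_{\PP^1}(-1)$ on each fibre. Once this normalization is in place, everything else is formal bookkeeping with the Leray spectral sequence and the Kummer sequence.
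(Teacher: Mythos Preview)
Your proof is correct and follows essentially the same route as the paper's: Leray spectral sequence for $\omega_2$ with $\mu_{\ell^n}$ coefficients, proper base change along the square~(\ref{diag2}) to compute the higher direct images, and the Kummer-sequence comparison using Proposition~\ref{A1}. Your argument for the surjectivity of the edge map $H^2(X^{[2]},\mu_{\ell^n}) \to \ZZ/\ell^n$ (via the geometric fact $\cO_{X^{[2]}}(E)|_{\PP^1} \simeq \cO_{\PP^1}(-1)$ on a fibre) is more direct than the paper's diagram chase involving the Leray sequence for $\GG_m$ and a contradiction argument, but the overall structure is identical.
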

\begin{proof} It suffices to check the isomorphism for torsion elements of order $\ell^m$ for any $m \geq 1$. We set $n=\ell^m$ in the following. Consider the Leray-Serre spectral sequence for $\omega_2: X^{[2]} \ra X^{(2)}$ with values in the constant sheaf defined by $\mu_n$:
\begin{equation}
E^{p,q}_2=H^p(X^{(2)},R^q \omega_{2*} \ul{\mu_n}) \implies E^{p+q}=H^{p+q}(X^{[2]}, \ul{\mu_n}).
\end{equation}
Consider the $7$-term long sequence (cf \cite[page 371, Cor 3.2]{merkur})
\begin{equation} \label{seventerm} 0 \ra E^{1,0}_2 \ra E^1 \ra E^{0,1}_2 \ra E^{2,0}_2 \ra ker(E^2 \ra E^{0,2}_2) \ra E^{1,1}_2 
\end{equation}
Now for $p>0$ the sheaf $R^p \omega_{2*} \mu_n$ vanishes on the complement $j:U  \ra X^{(2)}$ of the closed subscheme $i: X \ra X^{(2)}$ by the proper base  change theorem. So by the decomposition theorem \cite[Theorem 8.1.2]{tamme} the natural map 
\begin{equation}
R^p \omega_{2*} \mu_n \ra i_* i^* R^p \omega_{2*} \mu_n
\end{equation}
of \'etale sheaves is an isomorphism for any $p>0$. Further for $p>0$ by the proper base change theorem we have an isomorphism of \'etale sheaves
\begin{equation}
i^* R^p \omega_{2*} \mu_n \ra R^p \pi_* j^* \mu_n.
\end{equation}
Now we also have an isomorphism of \'etale sheaves $j^* \ul{\mu_n}=\ul{\mu_n}$. Therefore we have an isomorphism of groups for $i \in \{0,1\}$:
\begin{equation} \label{lowvanish}
H^i(X^{(2)},R^1 \omega_{2*} \mu_n) =H^i(X,R^1 \pi_* \mu_n)
\end{equation}
 The stalks of the sheaves $R^1 \pi_* \mu_n$ can be computed by considering the fiber of $\pi$. By \cite[Prop 10.3.3]{tamme} we have the exact sequence
\begin{equation*}
0 \ra H^0(\PP^1, \mu_n) \ra H^0(\PP^1, \GG_m) \ra H^0(\PP^1,\GG_m) \ra H^1(\PP^1, \mu_n) \ra \Pic(\PP^1) \stackrel{n}{\ra} \Pic(\PP^1)
\end{equation*} 
 Since each fiber of $\pi$ identifies with $\PP^1$ which has no torsion line bundles and $k$ is algebraically closed, so $H^1(\PP^1,\mu_n)=0$. Thus 
 \begin{equation} \label{r1omegavanish}
 R^1 \omega_{2*} \mu_n =R^1 \pi_* \mu_n =0,
 \end{equation}
 and the groups in (\ref{lowvanish}) vanish. This means that $E^{0,1}_2$ and $E^{1,1}_2$ in (\ref{seventerm}) vanish. 

Thus we get the exact sequence 
\begin{equation} \label{munseq}
0 \ra H^2(X^{(2)}, \omega_{2,*} \mu_n) \ra H^2(X^{[2]},\mu_n) \ra H^0(X^{(2)},R^2 \omega_{2*} \mu_n)
\end{equation} 
We will now simplify this sequence and show that it is also surjective after two preparations.
The sheaf  $\omega_2^* \ul{\mu_n}$ identifies with the sheaf defined $\mu_n$ on $X^{[2]}$ by Proposition \ref{pushforwardbyomega2}.  Reasoning for $R^2 \pi_* \mu_n$ as we have done for $R^1 \pi_* \mu_n$ we get
\begin{equation}
H^0(X^{(2)},R^2 \omega_{2*} \mu_n )=H^0(X,R^2 \pi_* \mu_n).
\end{equation}
Since the fibers of $\pi$ are isomorphic to $\PP^1$, so by \cite[Prop 10.3.3]{tamme} we note over $x \in X$ the stalk $(R^2 \pi_* \mu_n)_x$ is given by $
\Pic(\pi^{-1}(x)) \stackrel{n}{\ra} \Pic(\pi^{-1}(x)) \ra H^2(\pi^{-1}(x),\mu_n) \ra 0$. Hence it is discrete being isomorphic to $ \ZZ/n\ZZ$.
Further this isomorphism is canonical because the ample generator of $\PP(\Omega_X)_x = \PP(\Omega_{X,x})= \PP^1$ maps to $1 \in \ZZ/n\ZZ$. Finally since $\pi$ is a locally trivial $\PP^1$-fibration, so over the overlaps of \'etale open sets, the ample generator of $\PP^1$ must map to itself. Thus $R^2 \pi_* \mu_n$ identifies with the constant sheaf defined by $\ZZ/n\ZZ$. Hence we have
\begin{equation} \label{evaluationr2mun}
H^0(X^{(2)},R^2 \omega_{2*} \mu_n )=H^0(X,R^2 \pi_* \mu_n)=\ZZ/n\ZZ.
\end{equation}


We now come to the second preparation.
Consider the Kummer sequence 
\begin{equation} \label{kummerseq} 1 \ra \mu_n \ra \GG_m \stackrel{n}{\ra} \GG_m \ra 1
\end{equation} of \'etale sheaves on $X^{[2]}$.  So taking pushforward by $\omega_{2}$, we get the exact sequence $0 \ra \omega_{2*} \mu_n \ra \omega_{2*} \GG_m \ra \omega_{2*} \GG_m \ra R^1 \omega_{2*} \mu_n \ra R^1 \omega_{2*} \GG_m \ra R^1 \omega_{2*} \GG_m \ra R^2 \omega_{2*} \mu_n \ra \cdots$. Recall that $\GG_m \simeq \omega_{2*} \GG_m$ and $\mu_n \simeq \omega_{2*} \mu_n$ by Proposition \ref{pushforwardbyomega2}. So the begining of the long exact sequence identifies with the Kummer sequence of sheaves on the \'etale site of $X^{(2)}$. So it is also right exact. Thus  by (\ref{r1omegavanish}), we get an exact sequence of \'etale sheaves
\begin{equation}
0 \ra R^1 \omega_{2*} \GG_m \stackrel{R^1(n)}{\lra} R^1 \omega_{2*} \GG_m \ra R^2 \omega_{2*} \mu_n
\end{equation} 
The map $R^1(n)$ is also multiplication by $n$ on the sheaf $R^1 \omega_{2*} \GG_m$. Indeed, if we take an injective resolution of $\GG_m \ra I^{\bullet}$, then $n: \GG_m \ra \GG_m$ lifts to a map of complexes $I^{\bullet} \ra I^{\bullet}$ which is multiplication by $n$ on each piece and thus induces multiplication by $n$ on cohomology sheaves. By (\ref{evaluationr2mun}), we get
\begin{equation} \label{gammar1n}
0 \ra H^0(X^{(2)},R^1 \omega_{2*} \GG_m) \stackrel{\Gamma(R^1(n))}{\lra} H^0(X^{(2)},R^1 \omega_{2*} \GG_m) \ra \ZZ/n\ZZ
\end{equation}
and $\Gamma(R^1(n))$ is multiplication by $n$. By the Leray sequence for $\omega_2:X^{[2]} \ra X^{(2)}$ with values in $\GG_m$, in low degree terms we have the exact sequence
$0 \ra H^1(X^{(2)},\omega_{2*} \GG_m) \ra H^1(X^{[2]},\GG_m) \ra
 H^0(X^{(2)},R^1 \omega_{2*} \GG_m) \ra H^2(X^{(2)},\omega_{2*} \GG_m) \stackrel{\delta}{\ra} H^2(X^{[2]}, \omega_{2*} \GG_m)$

Recall $\omega_{2*} \GG_m \simeq \GG_m$ by Proposition \ref{pushforwardbyomega2} and 
  that by (\ref{pichilbsym2}) we have 
  \begin{equation} \label{alphaissplitsurjection} \Pic(X^{[2]}) = \Pic(X^{(2)}) \oplus \ZZ
  \end{equation} where $\ZZ$ corresponds to the line bundle of the  divisor $\PP(\Omega_X) \hra X^{[2]}$.
So the cokernel of $0 \ra \Pic(X^{(2)}) \ra \Pic(X^{[2]})$ identifies with $\ZZ$. Now we view $\ZZ$ as a subgroup of $H^0(X^{(2)},R^1 \omega_{2*} \GG_m)$. Thus  we get the exact sequence
\begin{equation} \label{intermediatesequence}
0 \ra \ZZ \ra H^0(X^{(2)},R^1 \omega_{2*} \GG_m) \ra H^2(X^{(2)},\GG_m) \stackrel{\delta}{\ra} H^2(X^{[2]}, \GG_m)
\end{equation}

Let us analyse the consequences of the inclusion $\ZZ \hra H^0(X^{(2)},R^1 \omega_{2*} \GG_m)$ on (\ref{gammar1n}) where $\Gamma(R^1(n))$ acts by multiplication by $n$. It follows that (\ref{gammar1n}) must be surjective, the induced map on $H^0(X^{(2)},R^1 \omega_{2*} \GG_m)/\ZZ$ is multiplication by $n$ and it is an isomorphism. Thus in (\ref{intermediatesequence})  we get that $\ker(\delta)$ has no non-trivial element of order dividing $n$.

Let us now prove that (\ref{munseq}) is surjective.   Now consider the diagram with exact rows and columns obtained from the Kummer sequence where $\Pic()/n$ denotes $\Pic()/n\Pic()$ where the middle vertical column is (\ref{munseq}):
\begin{equation*}
\xymatrix{
\Pic(X^{(2)})/n \ar@{^{(}->}[r] \ar@{^{(}->}[d] & H^2(X^{(2)},\mu_n) \ar[r] \ar@{^{(}->}[d] & H^2(X^{(2)},\GG_m) \ar[d]^{\delta} \\
\Pic(X^{[2]})/n \ar@{^{(}->}[r] \ar@{->>}[d]^{\alpha} & H^2(X^{[2]},\mu_n) \ar[r] \ar[d]^{\gamma} & H^2(X^{[2]},\GG_m)  \\
  \ZZ/n \ar[r]^{\beta}  & H^0(X^{(2)},R^2 \omega_{2,*} \mu_n) &  \\
}
\end{equation*}
We want to prove that $\beta$ is an isomorphism. Then a little diagram chase would show that (\ref{munseq}) is surjective. By (\ref{evaluationr2mun}), it suffices to show that $\beta$ is injective. Let $e \in \ZZ/n\ZZ$ be a non-zero element in the kernel of $\beta$.

Since  $\alpha$ is a split surjection by (\ref{alphaissplitsurjection}), so $e$ lifts to $H^2(X^{[2]},\mu_n)$. Since $\gamma(e)=0$, so it defines a non-zero element, say $e' \in H^2(X^{(2)},\GG_m)$ lying in the kernel of $\delta$. We showed earlier that $\ker(\delta)$ does not have  any element of order $n$. But $e'$ such an element. So it must be zero. Hence we get a contradiction. Thus $\beta$ is injective and so an isomorphism. Therefore $\gamma$ is surjective.
Thus from (\ref{munseq}) we get the following sequence is exact:
\begin{eqnarray} \label{uptomun}
\quad  \quad 0 \ra  H^2(X^{(2)},\mu_{\ell^m}) \ra  H^2(X^{[2]},\mu_{\ell^m}) \ra  H^0(X^{(2)},R^2 \omega_{2*} \mu_{\ell^m}) \ra 0 
\end{eqnarray}


By the Kummer sequence, for any scheme $Z$ we get the exact sequence
 \begin{eqnarray} 
\label{formun}  0 \ra NS(Z) \otimes \ZZ/\ell^m\ZZ \ra H^2_{\acute{e}t}(Z,  \mu_{\ell^m}) \ra Br'(Z)_{\ell^m} \ra 0 
  \end{eqnarray}
  By  Proposition \ref{A1} we have 
  \begin{equation} \label{nsmodn} NS(X^{[2]})/n=NS(X^{(2)})/n \oplus \ZZ/n.
  \end{equation} 
    
  So since $n=\ell^m$, thus by (\ref{uptomun}), taking in account (\ref{formun}) for $X^{[2]}$ and $X^{(2)}$, by (\ref{nsmodn}) and (\ref{evaluationr2mun}) it follows by
\begin{equation}
\xymatrix{
0 \ar[r] & Br'(X^{(2)})_{\ell^m} \ar@{.>}[r] & Br'(X^{[2]})_{\ell^m} \ar[r] & 0 \ar[r]  & 0 \\
0 \ar[r] & H^2(X^{(2)},\mu_{\ell^m}) \ar[r] \ar@{>>}[u] & H^2(X^{[2]},\mu_{\ell^m} \ar[r] \ar@{>>}[u] & H^0(X^{(2)},R^2 \omega_{2*} \GG_m) \ar[r] \ar[u] & 0 \\
0 \ar[r] & NS(X^{(2)}) \otimes \ZZ/\ell^m \ar[r] \ar@{^{(}->}[u] & NS(X^{[2]}) \otimes \ZZ/\ell^m \ar[r] \ar@{^{(}->}[u] & \ZZ/\ell^m \ar@{.>}[u] \ar[r] & 0
}
\end{equation}  
 that the dotted vertical arrow is an isomorphism. Thus the natural morphism $Br'(X^{(2)})_n \ra Br'(X^{[2]})_n$ is an isomorphism. 
 \end{proof}


\section{Cohomological Brauer group of $X^{(2)}$ in terms of $X^{\times 2}$} \label{cohbrgpsym2}  We fix an algebraically closed field $k$ of characteristic  $p$.  Let $\ell \neq p $ be a prime. For an abelian group $A$ let $A_{\ell^m}$ denote the subgroup of ${\ell}^m$ torsion elements of $A$. Let $A_{\ell^*}$ denote the set of $\ell^m$ torsion elements for all $m \geq 1$.

Let $S_2$ act on $X^{\times 2}:=X \times X$ by switching the two factors. Let 
\begin{equation}
[X^{\times 2}/S_2]
\end{equation}
denote the stack quotient. Consider the cartesian diagram
\begin{equation}
\xymatrix{
\cG \ar[r]_{j} \ar[d]^{\pi'} & [X^{\times 2}/S_2] \ar[d]^{\pi} \\
X \ar[r]^{i} & X^{(2)}
}
\end{equation}
where $\cG$ denotes the fibered product. The gerbe $\pi: \cG \ra X $ is neutral (\cite[D\'efinition 3.20]{lmb}) because $\Delta: X \ra X^{\times 2} \ra X^{(2)}$ provides a section $s: X \ra \cG$. Now the group scheme $G\ra X^{(2)}$ whose sections on $y: U \ra X$ is given by $Isom(s(y),s(y))$ identifies with $S_2 \times X$ because the identity morphism $ X \ra X$ factors through $X \ra \cG \ra X$. Thus if $BS_2:=[\Spec(k)/S_2]$, then $B(G/X) \simeq BS_2 \times X$ as classifying spaces and therefore by \cite[Lemme 3.21]{lmb} we have the following isomorphism of $S_2$-gerbes on $X$
\begin{equation}
\cG \simeq X \times BS_2
\end{equation}
Thus the sheaf $R^k \pi'_* \mu_n$ identifies with the constant sheaf with fiber $H^k(BS_2, \mu_n)$. We will denote this isomorphism of sheaves as follows:
\begin{equation} \label{isoconstantsheaf}
R^k \pi'_* \mu_n = \ul{H^k(BS_2,\mu_n)}.
\end{equation}

\begin{prop} \label{pushforwardbypi} The natural adjunction maps $\GG_m \ra \pi_* \pi^*
 \GG_m$ and $\mu_n \ra \pi_* \pi^* \mu_n$ are isomorphisms of \'etale sheaves. 
\end{prop}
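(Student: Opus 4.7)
The plan is to transcribe the proof of Proposition \ref{pushforwardbyomega2} to this setting, substituting the $BS_2$-fiber of $\pi$ over the diagonal for the $\PP^1$-fiber of $\omega_2$. Write $j_U : U \hra X^{(2)}$ for the inclusion of the complement $U = X^{(2)} \setminus X$ of the diagonal; together with $i : X \hra X^{(2)}$ this gives Tamme's decomposition of the \'etale site of $X^{(2)}$ \cite[Theorem 8.1.7]{tamme}. It therefore suffices to verify that the adjunction morphism $F \ra \pi_* \pi^* F$ (for $F = \GG_m$ or $\mu_n$) becomes an isomorphism after applying $j_U^*$ and after applying $i^*$, compatibly with the natural transformation $\mathrm{id} \ra j_{U,*} j_U^*$.

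Over $U$ the $S_2$-action on ordered pairs of distinct points is free, so the stack quotient agrees with the scheme quotient and $\pi^{-1}(U) \ra U$ is an isomorphism. Hence $j_U^* F \ra j_U^* \pi_* \pi^* F$ is tautologically an isomorphism.

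Over the diagonal, the cartesian square introduced just before the proposition, combined with proper base change applied to the proper Deligne--Mumford-stack morphism $\pi$, yields
\begin{equation*}
i^* \pi_* \pi^* F \simeq \pi'_* j^* \pi^* F = \pi'_* \pi'^* i^* F .
\end{equation*}
Using $\cG \simeq X \times BS_2$, \'etale sheaves on $\cG$ correspond to $S_2$-equivariant \'etale sheaves on $X$ with trivial action on the base, and $\pi'_*$ of such a sheaf is the subsheaf of $S_2$-invariants. The pullback $\pi'^* G$ carries the trivial $S_2$-action, so $\pi'_* \pi'^* G = G^{S_2} = G$ for every abelian sheaf $G$ on $X$. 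Taking $G = i^* F$ this gives an isomorphism $i^* F \xrightarrow{\sim} i^* \pi_* \pi^* F$.

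Naturality of adjunction makes the resulting decomposition-theorem square commute, and Tamme's theorem then forces $F \xrightarrow{\sim} \pi_* \pi^* F$ for both $F = \GG_m$ and $F = \mu_n$. The main subtlety lies in the proper base-change step and in the computation $\pi'_* \pi'^* G = G$ in the stacky setting; these can be bypassed by descending to the finite flat double cover $q : X^{\times 2} \ra X^{(2)}$, writing $\pi_* \pi^* F = (q_* q^* F)^{S_2}$, and checking stalks directly with the hypothesis $\mathrm{char}(k) \neq 2$.
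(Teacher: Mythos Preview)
Your proof is correct and follows essentially the same route as the paper's: both check the adjunction map on the open locus $U$ (where $\pi$ is an isomorphism) and on the diagonal (where $\cG \simeq X \times BS_2$), then invoke Tamme's decomposition theorem to conclude. The only cosmetic difference is that the paper computes $i^*\pi_*F$ by directly evaluating sections $\GG_m(BS_2 \times V) = \Gamma(V)^\times$ on \'etale opens $V \ra X$, whereas you phrase the same computation via proper base change and $S_2$-invariants; your additional remark about the bypass through $(q_*q^*F)^{S_2}$ is not in the paper but is a valid alternative.
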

\begin{proof} Now $\pi^* \GG_m$ (resp $\pi^* \mu_n$) identifies with the sheaf defined by $\GG_m$ (resp. $\mu_n$) on $[X^{\times 2}/S_2]$. Here below after using this indentification, we get the following commutative diagram where all vertical arrows are obtained from adjunction $Id \ra \pi_* \pi^*$ and the horizontal by $Id \ra j_* j^*$:
\begin{equation} \label{decompositiondiag2}
\xymatrix{
i^* \GG_m \ar[r] \ar[d] & i^* j_* j^* \GG_m \ar[d] \\
i^* \pi_* \GG_m \ar[r] & i^* j_* j^* \pi_* \GG_m
} \quad
\xymatrix{
i^* \mu_n \ar[r] \ar[d] & i^* j_* j^* \mu_n \ar[d] \\
i^* \pi_* \mu_n \ar[r] & i^* j_* j^* \pi_* \mu_n 
}
\end{equation}

Now the left vertical arrows are an isomorphism because  $i^* \pi_* \GG_m$ (resp. $i^* \pi_* \mu_n$) identifies with $\GG_m$ (resp. $\mu_n$) too because for any \'etale open $V \ra X$ we have 
\begin{eqnarray*} i^* \pi_* \GG_m(V)= \GG_m(BS_2 \times V)=\Gamma(BS_2 \times V)^{\times} = \Gamma(V)^{\times}= \GG_m(V)=i^* \GG_m(V) \\
i^* \pi_* \mu_n(V)= \mu_n(BS_2 \times V)=\mu_n(\Gamma(BS_2 \times V)) = \mu_n(\Gamma(V))= \mu_n(V)=i^* \mu_n(V)
\end{eqnarray*}
Since $\pi^{-1} (U) \ra U$ is an isomorphism, so the second vertical arrow is an isomorphism. Thus we see that the sheaves $\GG_m$ and $\pi_* \GG_m$ on $X^{(2)}$ have the same mapping cylinder of the left exact additive functor $i^* j_*:  \tilde{U}_{\acute{e}t} \ra \tilde{X}_{\acute{e}t}$ (cf \cite[page 136]{tamme}) where $\tilde{U}_{\acute{e}t}$ denotes the category of sheaves on the \'etale site of $U$. By the decomposition theorem \cite[Theorem 8.1.7]{tamme}, the \'etale sheaves $\pi_* \GG_m$ and $\GG_m$ are isomorphic. The same argument also shows that $\pi_* \mu_n$ is isomorphic to $\mu_n$.

\end{proof}

We will switch to the Big-\'etale site while working over stacks because pull-back does not commute with finite limits over the small and lisse-\'etale sites \cite{olsson} but it does for the Big-\'etale site. This should not cause a problem because for schemes the cohomology  over the big and small \'etale sites agree \cite{tamme}.
  
  We begin by proving the analogue for the quotient stack $[X^{\times 2}/S_2]$ of a well known sequence for schemes $Z$ such that $\Pic^0(Z)$ is divisible.
  
\begin{prop} For any $n \geq 1$ we have the following exact sequence
\begin{equation} \label{kummermunbrn}
0 \ra NS([X^{\times 2}/S_2]) \otimes \ZZ/n \ra H^2_{\acute{E}t}([X^{\times 2}/S_2],\mu_n) \ra Br'([X^{\times 2}/S_2])_n \ra 0
\end{equation}
\end{prop}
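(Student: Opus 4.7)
The plan is to imitate the usual derivation of the Kummer sequence $0\to NS(Z)\otimes\ZZ/n\to H^2_{\acute{e}t}(Z,\mu_n)\to Br'(Z)_n\to 0$ for a smooth projective scheme $Z$ whose $\Pic^0$ is divisible: one takes the long exact cohomology sequence of the Kummer sheaf sequence and then uses divisibility of $\Pic^0$ to replace $\Pic/n$ by $NS\otimes\ZZ/n$. The same ingredients should carry over essentially verbatim for the Deligne--Mumford stack $\cM := [X^{\times 2}/S_2]$, working on the big \'etale site as the paper has announced.

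First, since $n$ is invertible in $k$ (as in the rest of the section we assume $n = \ell^m$ with $\ell \neq p$), the Kummer sequence $1\to\mu_n\to\GG_m\xrightarrow{n}\GG_m\to 1$ is exact as a sequence of sheaves on the big \'etale site of $\cM$; this may be checked on a smooth atlas, where it reduces to the usual exactness for schemes. Applying $H^*_{\acute{E}t}(\cM,-)$ and extracting the piece between $H^1$ and $H^2$ would then yield
\[
0 \to \Pic(\cM)/n\,\Pic(\cM) \to H^2_{\acute{E}t}(\cM,\mu_n) \to H^2_{\acute{E}t}(\cM,\GG_m)[n] \to 0,
\]
using the identification $H^1_{\acute{E}t}(\cM,\GG_m) = \Pic(\cM)$ and the fact that $Br'(\cM)_n$ is by definition the $n$-torsion of $H^2_{\acute{E}t}(\cM,\GG_m)$.

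Next I would show $\Pic^0(\cM)$ is $n$-divisible. Under the standard identification $\Pic(\cM) = \Pic_{S_2}(X^{\times 2})$, the exact sequence (\ref{piccomparison}) already used in the proof of Proposition \ref{A1} exhibits $\Pic(\cM)$ as an extension of the finite group $\XX^*(S_2)\simeq\ZZ/2$ by $\Pic(X^{(2)})$. Since $\XX^*(S_2)$ is discrete, the identity component of $\Pic(\cM)$ coincides with $\Pic^0(X^{(2)}) = \Pic^0(X)$, where the last equality is part of Proposition \ref{A1}. This is an abelian variety over the algebraically closed field $k$, hence divisible. Applying the snake lemma to multiplication by $n$ on $0\to\Pic^0(\cM)\to\Pic(\cM)\to NS(\cM)\to 0$ and using $\Pic^0(\cM)/n = 0$ then yields $\Pic(\cM)/n = NS(\cM)\otimes\ZZ/n$. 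Substituting this into the previous short exact sequence gives the proposition.

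The only delicate point is verifying that the Kummer long exact sequence and the identification $H^1_{\acute{E}t}(\cM,\GG_m) = \Pic(\cM)$ behave on the big \'etale site of $\cM$ exactly as on schemes. For a smooth DM quotient stack by a finite group whose order is invertible in $k$ (recall $p\neq 2$ throughout), both facts are standard, and the rest of the argument is purely formal. Thus the content of the proposition lies not in any new idea but in the careful transposition of these familiar scheme-theoretic facts to the stacky setting.
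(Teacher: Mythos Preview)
Your proposal is correct and follows essentially the same route as the paper: both derive the sequence from the long exact cohomology of the Kummer sequence on $\cM=[X^{\times 2}/S_2]$, then use the split extension $0\to\Pic(X^{(2)})\to\Pic(\cM)\to\XX^*(S_2)\to 0$ together with Proposition~\ref{A1} to identify $\Pic^0(\cM)$ with $\Pic^0(X)$ and conclude $\Pic(\cM)/n = NS(\cM)\otimes\ZZ/n$ by divisibility. The only cosmetic difference is that the paper phrases the last step as ``tensoring with $\ZZ/n$'' while you invoke the snake lemma explicitly.
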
 
\begin{proof}In the proof of Proposition \ref{A1} we showed that the sequence (\ref{piccomparison}) $$1 \ra \Pic(X^{(2)}) \ra \Pic_{S_2}(X^{\times 2}) \ra \XX^*(S_2) \ra 1$$ is exact and split by mapping the non-trivial character $\XX^*(S_2)$ to the non-trivial linearization on the trivial bundle. Now a line bundle on the quotient stack $[X^{\times 2}/S_2]$ is equivalently a line bundle on $X^{\times 2}$ together with a $S_2$-linearization. Thus 
\begin{equation} \label{stackwithlin} \Pic_{S_2}(X^{\times 2})= \Pic([X^{\times 2}/S_2]).
\end{equation} So (\ref{piccomparison}) becomes
\begin{equation} \label{splitstackpic}
0 \ra \Pic(X^{(2)}) \ra \Pic([X^{\times 2}/S_2]) \ra \XX^*(S_2) \ra 1.
\end{equation}
This shows that 
\begin{equation}
\Pic^0([X^{\times 2}/S_2])=\Pic^0(X^{(2)}),
\end{equation}
in particular they are divisible. By definition of the N\'eron-Severi group we have the exact sequence $0 \ra \Pic^0([X^{\times 2}/S_2]) \ra \Pic([X^{\times 2}/S_2]) \ra NS([X^{\times 2}/S_2]) \ra 0$. Thus tensoring with $\ZZ/n$ we get
\begin{equation} \label{picnstensorn}
\Pic([X^{\times 2}/S_2]) \otimes \ZZ/n = NS([X^{\times 2}/S_2]) \otimes \ZZ/n.
\end{equation} By taking the long exact sequence associated to the Kummer sequence (\ref{kummerseq}) and analyzing the cohomology terms we observe that $H^1([X^{\times 2}/S_2],\GG_m)=\Pic([X^{\times 2}/S_2])$ and the induced map is multiplication by $n$ on $H^i([X^{\times 2}/S_2],\GG_m)$. This gives $$
\Pic([X^{\times 2}/S_2]) \stackrel{n}{\ra} \Pic([X^{\times 2}/S_2]) \ra H^2([X^{\times 2}/S_2],\mu_n) \ra H^2([X^{\times 2}/S_2],\GG_m) \stackrel{n}{\ra}$$ which implies $$0 \ra coker(n) \ra H^2([X^{\times 2}/S_2],\mu_n) \ra \ker(n) \ra 0.$$  The cokernel on $\Pic$ by (\ref{picnstensorn}), identifies with $NS([X^{\times 2}/S_2]) \otimes \ZZ/n$ and kernel of the second $n$ identifies with $Br'([X^{\times 2}/S_2])_n$. Thus we get (\ref{kummermunbrn}).
\end{proof}

\begin{thm} \label{stacktosym2} For $\ell \neq 2$ a prime, 
the following natural map is an isomorphism 
\begin{equation} \label{oddsym2stack} Br'(X^{(2)})_{\ell^*} \ra Br'([X^{\times 2}/S_2])_{\ell^*}.
\end{equation}
 For $\ell=2$ we  have the following exact sequence
\begin{equation}  \label{2*}
0 \ra Br'(X^{(2)})_{2^*} \ra  \ker(Br'([X^{\times 2}/S_2])_{2^*} \ra \mu_2) \ra H^1(X,\mu_2) 
\end{equation}
\end{thm}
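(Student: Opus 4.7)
The plan is to run the Leray spectral sequence for $\pi\colon [X^{\times 2}/S_2] \to X^{(2)}$ with $\mu_n$-coefficients (where $n = \ell^m$) and compare it against the Kummer short exact sequence \ref{kummermunbrn} together with its analogue for $X^{(2)}$. The analysis splits into odd versus even $\ell$ because the relevant pieces of $H^*(BS_2,-)$ are $2$-power torsion.

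First I would identify the higher direct images. Since $\pi$ is an isomorphism over $U$, the sheaves $R^q\pi_*\mu_n$ are supported on $X \hookrightarrow X^{(2)}$ for $q \geq 1$ by proper base change (applied to the proper Deligne--Mumford morphism $\pi$). On $X$, the isomorphism $\mathcal G \simeq X \times BS_2$ together with \ref{isoconstantsheaf} makes these constant sheaves with fibre $H^q(BS_2,\mu_n)$, and a quick group-cohomology computation with trivial $S_2$-action gives $H^1(BS_2,\mu_n) = \mu_n[2]$ and $H^2(BS_2,\mu_n) = \mu_n/2\mu_n$. Both vanish for odd $\ell$ and both equal $\mu_2$ when $\ell = 2$.

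For odd $\ell$ the Leray spectral sequence collapses, so $\pi^*\colon H^i(X^{(2)},\mu_n) \to H^i([X^{\times 2}/S_2],\mu_n)$ is an isomorphism in every degree. By \ref{splitstackpic} the $NS$-groups differ by a copy of $\mathbb Z/2$, which dies after tensoring with $\mathbb Z/\ell^m$; comparing the two Kummer sequences via the five-lemma then yields \ref{oddsym2stack} after passing to $\ell^*$-torsion.

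For $\ell = 2$ I would write out the $7$-term sequence
\[
0 \to H^1(X^{(2)},\mu_n) \to H^1([X^{\times 2}/S_2],\mu_n) \to \mu_2 \xrightarrow{d_2} H^2(X^{(2)},\mu_n) \to F^1 \to H^1(X,\mu_2) \to H^3(X^{(2)},\mu_n),
\]
with $F^1 := \ker(H^2([X^{\times 2}/S_2],\mu_n) \to \mu_2)$ the first Leray-filtration piece. A first observation is that $d_2 = 0$: using Kummer and $H^0(\mathbb G_m)=k^\times$ divisible, $H^1(-,\mu_n) = \Pic(-)[n]$, so the cokernel of $H^1(X^{(2)},\mu_n) \to H^1([X^{\times 2}/S_2],\mu_n)$ equals $\Pic([X^{\times 2}/S_2])[n]/\Pic(X^{(2)})[n] = \mathbb Z/2$ by \ref{splitstackpic}, forcing $d_2$ to vanish. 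The geometric crux is that the edge map $H^2([X^{\times 2}/S_2],\mu_n) \to \mu_2$ surjects already from the $\mathbb Z/2$ summand of $NS([X^{\times 2}/S_2])\otimes\mathbb Z/n$ under Kummer: this summand is generated by the trivial line bundle with the sign $S_2$-linearization, whose restriction to a fibre $BS_2$ is the generator of $\Pic(BS_2) = \mathbb Z/2$, and whose Kummer image therefore generates $H^2(BS_2,\mu_n) = \mu_2$. A snake-lemma chase on the Kummer sequence for the stack then produces
\[
0 \to NS(X^{(2)}) \otimes \mathbb Z/n \to F^1 \to Br'([X^{\times 2}/S_2])_n \to 0,
\]
and splicing this with the Kummer sequence for $X^{(2)}$ via the injection $H^2(X^{(2)},\mu_n) \hookrightarrow F^1$ gives a short exact sequence $0 \to Br'(X^{(2)})_n \to Br'([X^{\times 2}/S_2])_n \to L_n \to 0$ with $L_n \hookrightarrow H^1(X,\mu_2)$. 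Passing to the colimit in $m$ then yields \ref{2*}, the promised map $Br'([X^{\times 2}/S_2])_{2^*} \to \mu_2$ being the one induced from the Leray edge map modulo the above splittings.

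The principal obstacle will be the $\ell = 2$ case, and specifically pinning down the geometric interpretation of the extra $\mathbb Z/2$ summand of $NS([X^{\times 2}/S_2])$ as the sign linearization together with the verification that its Kummer image generates the $\mu_2 = R^2\pi_*\mu_n|_X$. Once this surjectivity on the $NS$-level is in hand, the remaining comparisons reduce to diagram chases; the vanishing of $d_2$ is then the mild bonus that makes the inclusion $Br'(X^{(2)})_{2^*} \hookrightarrow \ker(Br'([X^{\times 2}/S_2])_{2^*} \to \mu_2)$ visible.
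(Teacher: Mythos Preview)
Your approach is essentially the paper's: both run the Leray spectral sequence for $\pi\colon [X^{\times 2}/S_2]\to X^{(2)}$ with $\mu_n$-coefficients, identify the higher direct images via $\mathcal G\simeq X\times BS_2$ and the group cohomology of $S_2$, and then pass from $\mu_n$ to Brauer groups via the Kummer sequences and the $NS$-comparison \ref{splitstackpic}. For odd $\ell$ the two arguments are identical.

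For $\ell=2$ you go one step further than the paper. The paper does not decide whether the composite
\[
\mu_2 \;\stackrel{\text{sign}}{\hookrightarrow}\; NS([X^{\times 2}/S_2])\otimes\ZZ/n \;\hookrightarrow\; H^2([X^{\times 2}/S_2],\mu_n) \;\longrightarrow\; E_2^{0,2}=\mu_2
\]
is zero or an isomorphism; it simply treats both cases separately (see the dichotomy around \eqref{casebycase}) and in either case extracts the sequence \eqref{2*}. You instead argue that this composite is an isomorphism, by restricting the sign-linearized trivial bundle to a fibre $BS_2$ and noting that it gives the generator of $\Pic(BS_2)\simeq\ZZ/2$, whose Kummer class generates $H^2(BS_2,\mu_n)=\mu_2$. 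This computation is correct (Kummer is natural for pullback, and the edge map composed with evaluation at a point is restriction to the fibre), so your snake-lemma chase producing $0\to NS(X^{(2)})\otimes\ZZ/n\to F^1\to Br'([X^{\times 2}/S_2])_n\to 0$ goes through. The upshot is that you in fact show the map $Br'([X^{\times 2}/S_2])_{2^*}\to\mu_2$ appearing in \eqref{2*} is \emph{zero}, hence the kernel is all of $Br'([X^{\times 2}/S_2])_{2^*}$; this is slightly sharper than what the paper states but of course implies it. Your closing sentence about ``the promised map \ldots being the one induced from the Leray edge map'' is therefore a bit misleading: by your own argument that induced map vanishes, and you should say so explicitly.
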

\begin{proof}   We begin with a preparation which holds both for $\ell \neq 2$ and $\ell=2$.

Let $n=l^m$ below. Consider the Leray spectral sequence with values in $\mu_n$ for $\pi$: 
$$E^{p,q}_2=H^p(X^{(2)},R^q \pi_* \mu_n) \implies H^{p+q}([X^{\times 2}/S_2], \mu_n).$$ Taking the associated $7$-term sequence, we get
\begin{equation} \label{7termformun}
0 \ra E^{1,0}_2 \ra E^1 \ra E^{0,1}_2 \ra  E^{2,0}_2 \ra ker(E^2 \ra E^{0,2}_2) \ra E^{1,1}_2
\end{equation}

Since $\pi^{-1}(U) \ra U$ is an isomorphism, so by proper base change theorem it follows that for $p >0$ the sheaves $R^p \pi_* \mu_n$ are supported on the closed subscheme $i: X \hra X^{(2)}$. So by decomposition theorem the natural morphism of \'etale sheaves $$R^p \pi_* \mu_n \ra i_* i^* R^p \pi_* \mu_n$$ is an isomorphism. Further by the proper base-change theorem in \'etale cohomology we have the isomorphism of \'etale sheaves
\begin{equation}
i^* R^p \pi_* \mu_n \ra R^p \pi'_* j^* \mu_n. 
\end{equation}
Since $j^* \mu_n=\mu_n$, so we get the isomorphism 
\begin{equation} R^p \pi_* \mu_n \ra i_* R^p \pi'_* \mu_n.
\end{equation}
Now the sheaf of groups $R^1\pi'_*\mu_n$ has fibers isomorphic to the group $H^1(BS_2,\mu_n)$. Since the cohomology of the classifying space equals that of the group, so this equals $H^1(S_2,\mu_n)$ which equals the two torsion elements in $\mu_n$.  So 
\begin{eqnarray} \label{r1piprime}
R^1 \pi'_* \mu_n= \mu_2 \quad \text{for} \quad \ell=2 \\
 0 \quad \text{for} \quad \ell \neq 2.
\end{eqnarray}
 Now  $ E^{0,1}_2= H^0_{\acute{E}t}(X^{(2)},R^1 \pi_* \mu_n) =H^0_{\acute{E}t}(X,R^1 \pi'_* \mu_n) \stackrel{(\ref{isoconstantsheaf})}{=}H^0(X, \ul{H^1(BS_2,\mu_n)})= H^1 (BS_2, \mu_n)= H^1(S_2,\mu_n)$, which equals $\mu_2$ for $l=2$ and vanishes otherwise. Therefore 
\begin{eqnarray} \label{e012}
E^{0,1}_2 & =& \mu_2 \quad \ell=2 
\\ &=& 0 \quad \quad  \ell \neq 2
\end{eqnarray} Recall that $n=\ell^m$. Further by (\ref{r1piprime}) we get
\begin{eqnarray} \qquad \quad E^{1,1}_2 = H^1_{\acute{E}t}(X^{(2)},R^1 \pi_* \mu_n)=H^1_{\acute{E}t}(X,R^1 \pi'_* \mu_n) &=& H^1_{\acute{E}t} (X, \mu_2) \quad  \ell=2 \\
&=& 0 \qquad \quad \ell \neq 2.
\end{eqnarray}
Similarly, $E^{0,2}_2=H^0_{\acute{E}t}(X^{(2)},R^2 \pi_* \mu_n)=H^0_{\acute{E}t}(X,R^2 \pi'_* \mu_n)=H^2_{\acute{E}t} (BS_2, \mu_n)$. So this equals $H^2(S_2,\mu_n)=\mu_n/2\mu_n$. This last term equals the $2$-torsion points of $\mu_n$. So
\begin{eqnarray} \label{e012twocases}
E^{0,2}_2= \mu_2 \quad  \text{for} \quad \ell=2 \\  0 \quad \text{for} \quad \ell \neq 2.
\end{eqnarray}

 Putting these terms in  the sequence (\ref{7termformun}), for $\ell \neq 2$ we get the isomorphism
\begin{equation} \label{oddiso}
H^2(X^{(2)},\mu_n) \stackrel{\simeq}{\ra} H^2([X^{\times 2}/S_2],\mu_n) 
\end{equation} 
 For $\ell=2$, by taking $n$-torsion points the exact sequence (\ref{splitstackpic})  gives 
 \begin{equation}
0 \ra \Pic(X^{(2)})_n \ra \Pic([X^{\times 2}/S_2])_n \ra \XX^*(S_2) \ra 1.
\end{equation}
Since $\pi_* \mu_n=\mu_n$ by Proposition \ref{pushforwardbypi}, so 
in the sequence (\ref{7termformun}) we have $E^{1,0}_2=\Pic(X^{(2)})_n$ and $E^1=\Pic([X^{\times 2}/S_2])_n$. Since $E^{0,1}_2=\mu_2$ for $\ell=2$ by (\ref{e012}),  so for $\ell=2$ the first three terms of (\ref{7termformun}) form a short exact sequence. So (\ref{7termformun}) reduces to 
\begin{equation} \label{longseqwithchar}
0 \ra H^2_{\acute{E}t}(X^{(2)},\mu_n) \ra \ker(H^2_{\acute{E}t}([X^{\times 2}/S_2],\mu_n) \ra \mu_2) \ra H^1(X, \mu_2)
\end{equation}

Now we want to pass from $\mu_n$ to $\GG_m$ coefficients. Since (\ref{splitstackpic}) is split exact it follows that 
\begin{equation} 0 \ra NS(X^{(2)})   \ra NS([X^{\times 2}/S_2]) \ra \XX^*(S_2) \ra 0
\end{equation} is split exact. Tensoring with $\ZZ/n$ that 
\begin{eqnarray} \label{oddns} NS(X^{(2)}) \otimes \ZZ/n\ZZ \ra NS([X^{\times 2}/S_2]) \otimes \ZZ/n\ZZ \quad \ell \neq 2 
\end{eqnarray}
is an isomorphism and 
\begin{eqnarray}
\label{bottom} \quad \quad  \quad 0 \ra  NS(X^{(2)}) \otimes \ZZ/n\ZZ \ra NS([X^{\times 2}/S_2]) \otimes \ZZ/n\ZZ \ra \mu_2 \ra 0 \quad \ell=2
\end{eqnarray}
is   a split exact sequence.
 On the other hand, by the Kummer exact sequence for $X^{(2)}$ and by ( \ref{kummermunbrn}) for $[X^{\times 2}/S_2]$, we  have exact sequences
\begin{eqnarray*}
0 \ra NS(X^{(2)}) \otimes \ZZ/n\ZZ \ra H^2(X^{(2)},  \mu_n ) \ra Br'(X^{(2)})_n \ra 1 \\  0 \ra NS([X^{\times 2}/S_2]) \otimes \ZZ/n\ZZ \ra H^2([X^{\times 2}/S_2],\mu_n ) \ra Br'([X^{\times 2}/S_2])_n \ra 1
\end{eqnarray*}
Recall that $n=\ell^m$. Thus quotienting  (\ref{oddiso}) by (\ref{oddns}) for $\ell \neq 2$ we get the isomorphism
\begin{equation}
H^2(X^{(2)},\GG_m)_n \ra H^2([X^{\times 2}/S_2],\GG_m)_n.
\end{equation}
Since this holds for any $n=\ell^m$, so we get (\ref{oddsym2stack}).
This settles the  case $\ell \neq 2$. For $\ell=2$ we argue as follows. By (\ref{longseqwithchar}) and (\ref{bottom}) consider the image of $\mu_2$ under the composite
\begin{equation} \label{casebycase}
0 \ra \mu_2 \stackrel{(\ref{bottom})}{\lra} \frac{H^2_{\acute{E}t}([X^{\times 2}/S_2],\mu_n)}{NS(X^{(2)}) \otimes \ZZ/n\ZZ} \stackrel{(\ref{longseqwithchar})}{\lra} \mu_2
\end{equation}
Now either the composite is zero, or it is non-zero. If it is zero, then it factors through $H^2_{\acute{E}t}([X^{\times 2}/S_2],\GG_m)_{n} \ra \mu_2$. In the second case,  the $\mu_2$ inclusion is split by the second map in (\ref{casebycase}) given by (\ref{longseqwithchar}).  Thus (\ref{longseqwithchar}) factors as $$\frac{H^2([X^{\times 2}/S_2],\mu_n)}{NS(X^{(2)}) \otimes \ZZ/n\ZZ}=H^2_{\acute{E}t}([X^{\times 2}/S_2],\GG_m)_{n} \times \mu_2 \stackrel{p_2}{\ra} \mu_2.$$ In either case, from (\ref{longseqwithchar}) we get a well-defined  map,  
\begin{equation}
H^2_{\acute{E}t}([X^{\times 2}/S_2],\GG_m)_{n} \ra \mu_2
\end{equation}
(which is trivial in the second case). Hence in both cases,  we get the following exact sequence
\begin{equation}
 0  \ra H^2_{\acute{E}t}(X^{(2)},\GG_m)_{2^m} \ra \ker(H^2_{\acute{E}t}([X^{\times 2}/S_2],\GG_m)_{2^m} \ra \mu_2) \ra H^1(X, \mu_2)
\end{equation}
 Since this hold for any $m \geq 1$ so we get (\ref{2*}).

\end{proof}

 For any two abelian varieties $A_1,A_2$ over a field $k$ of arbitrary characteristic, let $Hom(A_1,A_2)$ denote the group of homomorphisms of $A_1$ into $A_2$ (\cite[\S 19]{mumfordablvar}). By \cite[Corollary 1, \S 19]{mumfordablvar},  for a certain $\rho \leq 4 dim(A_1) dim(A_2)$ we have  
\begin{equation} \label{freefg} Hom(A_1,A_2) \simeq \ZZ^{\rho}.
\end{equation} 
Thus in any characteristic $Hom(A_1,A_2)$ is a finitely generated free abelian group.
\begin{prop} \label{EFvanish} Let $n \in \NN$. Let $S_2$ act on $\Pic(X^{\times 2})$ by action $\sigma$. Let $\Pic(X^{\times 2})n$ denote the elements of $n$-torsion. Then 
\begin{equation} H^i(S_2,\Pic(X^{\times 2})_n)=0 \quad \text{for} \quad i \in \{1,2\}.
\end{equation}
\end{prop}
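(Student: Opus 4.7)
The plan is to identify $\Pic(X^{\times 2})_n$ with an induced $\ZZ[S_2]$-module and then invoke Shapiro's lemma. The starting point is the standard $S_2$-equivariant short exact sequence obtained from the seesaw principle,
\begin{equation*}
0 \to \Pic(X)^{\oplus 2} \to \Pic(X^{\times 2}) \to \Hom(Alb(X), \Pic^0(X)) \to 0,
\end{equation*}
where $S_2$ swaps the two summands $p_1^* \Pic(X)$ and $p_2^* \Pic(X)$ on the left, and acts by transposition on the correspondence term $\Hom(Alb(X), \Pic^0(X)) = \Hom(Alb(X), Alb(X)^{\vee})$. The equivariance is immediate from the functoriality of the pullbacks $p_i^*$.

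Next I would invoke the fact (\ref{freefg}) recorded just before the proposition: for any two abelian varieties $A_1, A_2$ the abelian group $\Hom(A_1, A_2)$ is finitely generated and free. In particular $\Hom(Alb(X), \Pic^0(X))$ has no $n$-torsion whatsoever. Applying the left-exact $n$-torsion functor to the above short exact sequence therefore kills the quotient term and produces an isomorphism of $\ZZ[S_2]$-modules
\begin{equation*}
\Pic(X^{\times 2})_n \;\cong\; \Pic(X)_n \oplus \Pic(X)_n,
\end{equation*}
with $S_2$ still acting by swap on the right-hand side.

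Finally, $\Pic(X)_n \oplus \Pic(X)_n$ with the swap action is the induced module $\Ind_{\{e\}}^{S_2}\bigl(\Pic(X)_n\bigr) = \ZZ[S_2] \otimes_{\ZZ} \Pic(X)_n$, so by Shapiro's lemma
\begin{equation*}
H^i\bigl(S_2, \Pic(X^{\times 2})_n\bigr) \;=\; H^i\bigl(\{e\}, \Pic(X)_n\bigr) \;=\; 0
\end{equation*}
for every $i \geq 1$, which in particular covers $i \in \{1, 2\}$. I do not expect a serious obstacle; the only point that needs some care is the $S_2$-equivariance of the seesaw decomposition, but this is a routine consequence of functoriality. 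The essential input is (\ref{freefg}): without the vanishing of $\Hom(Alb(X), \Pic^0(X))_n$, the transpose action on that summand could in principle contribute nontrivial $S_2$-cohomology and the argument would not conclude so directly.
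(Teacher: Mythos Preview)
Your proof is correct and shares its core with the paper's argument: both first reduce, via the torsion-freeness of $\Hom(Alb(X),\Pic^0(X))$ recorded in (\ref{freefg}), to the identification $\Pic(X^{\times 2})_n \cong \Pic(X)_n \oplus \Pic(X)_n$ with $S_2$ acting by swap. The divergence is only in the last step. The paper computes $H^1$ and $H^2$ by hand using the standard descriptions for a cyclic group of order two (namely $\{L:\sigma(L)=L^{-1}\}/\{\sigma(L)\otimes L^{-1}\}$ and $M^{S_2}/\{\sigma(L)\otimes L\}$), then checks directly that both quotients vanish for the swap module. You instead recognise the swap module as $\Ind_{\{e\}}^{S_2}\Pic(X)_n$ and invoke Shapiro's lemma. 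Your route is slightly more conceptual and yields vanishing for all $i\geq 1$ at once; the paper's is more elementary and self-contained. Either is perfectly adequate here.
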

\begin{proof} 
By \cite[Theorem 7.1]{maclane}, we have
\begin{eqnarray} \label{EF}  H^1(S_2,\Pic(X^{\times 2})_n)= \frac{\{L \in \Pic(X^{\times 2})_n | \sigma(L) \simeq L^{-1} \}}{\{ \sigma(L) \otimes L^{-1}|L \in \Pic(X^{\times 2})_n \}} \\ H^2(S_2,\Pic(X^{\times 2})_n)= \frac{\Pic(X^{\times 2})_n^{S_2}}{\{\sigma(L) \otimes L |L \in \Pic(X^{\times 2})_n \}} 
\end{eqnarray}

Recall that $\Pic(X^{\times 2})=\Pic(X)^{\times 2} \times Hom(Alb(X),\Pic^0(X))$ where $\Pic^0(X)$ is the Picard variety of $X$ parametrizing algebraically trivial line bundles on $X$ and $Alb(X)$ is the Albanese variety.


Now $\Pic(X^{\times 2})_n=\Pic(X)^{\times 2}_n$ since by (\ref{freefg}) the abelian group $Hom(Alb(X),\Pic^0(X))$ is torsion-free.

 Since $Alb(X)$ and $\Pic^0(X)$ are dual to each other, so for a homomorphism $\psi: Alb(X) \ra \Pic^0(X)$ of abelian schemes, let $\psi^{\vee}: Alb(X) \ra \Pic^0(X)$ denote the dual homomorphism. Therefore writing $L \in \Pic(X^{\times 2})$ as $(L_1,L_2,\psi) \in \Pic(X)^2 \times Hom(Alb(X),\Pic^0(X))$ we see that 
\begin{eqnarray}
\sigma(L_1,L_2,\psi)=(L_2,L_1,\psi^{\vee}) \\
(L_1,L_2,\psi)^{-1}=(L_1^{-1},L_2^{-1},-\psi).
\end{eqnarray}
So the condition $\sigma(L) \simeq L^{-1}$ means that $L_1=L_2^{-1}$ and $\psi^{\vee}=-  \psi$. Further if $L \in \Pic(X^{\times 2})_n$ then $\psi=0$.
Putting these relations in (\ref{EF}), it follows that $H^i(S_2,\Pic(X^{\times 2})_n)=0$ for $i \in \{1,2\}$.

\end{proof}


For any $m \geq 1$, let $n=2^m$. Consider the short exact sequence
\begin{equation}
0 \ra NS(X^{\times 2}) \otimes \ZZ/n \ra H^2(X^{\times 2},\mu_n) \ra Br'(X^{\times 2})_n \ra 0.
\end{equation}
Taking $S_2$ invariants, using the long exact sequence, we set 
\begin{equation} 
K_{2^m} := \ker(Br'(X^{\times 2})_{2^m}^{S_2} \ra H^1(S_2, NS(X^{\times 2}) \otimes \ZZ/2^m))
\end{equation}

\begin{thm} \label{allexcept2tor} 
The natural morphism \begin{equation} \label{oddisoinv}
Br'([X^{\times 2}/S_2])_{\ell^*} \ra Br(X^{\times 2})_{\ell^*}^{S_2}
\end{equation}
induces an isomorphism for $\ell$ odd. 
For $\ell=2$, we have the exact sequence
\begin{equation} \label{2mef}
0 \ra Br'([X^{\times 2}/S_2])_{2^m} \ra K_{2^m}  \ra \mu_2
\end{equation}
\end{thm}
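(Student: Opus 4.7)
\emph{Proof proposal.} The plan is to pass from the \'etale Galois double cover $X^{\times 2} \to [X^{\times 2}/S_2]$ through the associated Hochschild--Serre spectral sequence
\begin{equation*}
E_2^{p,q} = H^p(S_2, H^q_{\acute{E}t}(X^{\times 2}, \mu_n)) \implies H^{p+q}_{\acute{E}t}([X^{\times 2}/S_2], \mu_n), \quad n = \ell^m,
\end{equation*}
and then translate from $\mu_n$-coefficients to Brauer groups via the Kummer sequence, in parallel with the strategy of Theorem \ref{stacktosym2}. First I would identify the $E_2$-page in low degrees. Since $k$ is algebraically closed and $S_2$ acts trivially on $\mu_n$, $E_2^{p,0} = H^p(S_2, \mu_n)$ vanishes for $p > 0$ when $\ell$ is odd and equals $\mu_2$ for each $p \geq 1$ when $\ell = 2$. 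Because $k^\times$ is divisible, $H^1(X^{\times 2}, \mu_n) = \Pic(X^{\times 2})_n$, and Proposition \ref{EFvanish} gives $E_2^{p,1} = 0$ for $p=1,2$ (for any $\ell$).

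For $\ell$ odd, the entire column $E_2^{p,0}$ with $p > 0$ and the row $E_2^{p,1}$ for $p=1,2$ vanish, so the spectral sequence gives an isomorphism $H^2([X^{\times 2}/S_2], \mu_n) \to H^2(X^{\times 2}, \mu_n)^{S_2}$. Comparing the Kummer sequence (\ref{kummermunbrn}) with the $S_2$-invariants of the Kummer sequence on $X^{\times 2}$ (noting $H^1(S_2, -) = 0$ on $\ell$-primary groups for $\ell$ odd) and verifying the NS-comparison $NS([X^{\times 2}/S_2]) \otimes \ZZ/n \to (NS(X^{\times 2})/n)^{S_2}$---which holds because (\ref{splitstackpic}) descends modulo $\Pic^0$ to a split short exact sequence on NS, $\XX^*(S_2) \otimes \ZZ/n = 0$ for $n$ odd, and $2$ is invertible mod $n$---the five-lemma yields (\ref{oddisoinv}) after passing to the colimit in $m$.

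For $\ell = 2$ and $n = 2^m$, the positive-$p$ column $E_2^{p,0} = \mu_2$ survives, but $E_2^{1,1} = E_2^{2,1} = 0$ still forces $E_\infty^{1,1} = 0$. So the filtration on $H^2([X^{\times 2}/S_2], \mu_n)$ has only two nontrivial steps: $F^2 = E_\infty^{2,0}$, a quotient of $\mu_2$ by the image of $d_2: \Pic(X^{\times 2})_n^{S_2} \to \mu_2$; and $E_\infty^{0,2} = \ker(d_3)$, whose cokernel inside $H^2(X^{\times 2}, \mu_n)^{S_2}$ lies in $E_3^{3,0} \subset \mu_2$. I would then compare, via restriction, the Kummer short exact sequence on the stack with the $S_2$-invariants of the Kummer sequence on $X^{\times 2}$, truncated on the right at the kernel $K_{2^m}$ of the connecting map to $H^1(S_2, NS(X^{\times 2})/n)$. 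The left vertical NS-comparison has kernel $\XX^*(S_2) = \ZZ/2$ (from the split sequence tensored with $\ZZ/n$), and the middle vertical map on $H^2(-, \mu_n)$ has kernel and cokernel each a subgroup of $\mu_2$ by the spectral sequence analysis above. A snake-lemma chase then yields injectivity of $Br'([X^{\times 2}/S_2])_{2^m} \to K_{2^m}$ together with a cokernel contained in $\mu_2$, producing (\ref{2mef}).

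The main obstacle is the bookkeeping at the prime $2$: several copies of $\mu_2$ enter the picture (the character $\XX^*(S_2)$ in the NS kernel, the $E_\infty^{2,0}$-piece in the kernel of the middle vertical, and the $d_3$-obstruction in its cokernel), and one has to verify that they cancel correctly under the snake lemma so that a single $\mu_2$ remains in (\ref{2mef}). Matching the spectral sequence differentials $d_2$ and $d_3$ with the Kummer-theoretic connecting maps---in particular with the map $Br'(X^{\times 2})_{2^m}^{S_2} \to H^1(S_2, NS(X^{\times 2})/2^m)$ defining $K_{2^m}$---is where the argument will demand the most care.
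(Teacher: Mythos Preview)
Your approach is essentially the paper's: the Hochschild--Serre spectral sequence for $X^{\times 2}\to[X^{\times 2}/S_2]$ is precisely the Leray spectral sequence for $[X^{\times 2}/S_2]\to BS_2$ that the paper uses, and your identification of the $E_2$-terms, the odd-$\ell$ isomorphism, and the Kummer comparison all match.

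The one substantive gap is at $\ell=2$: you leave $E_\infty^{2,0}$ as ``a quotient of $\mu_2$ by the image of $d_2$'' and hope the snake lemma sorts out the various copies of $\mu_2$. The paper does not leave this open. It computes $d_2^{0,1}=0$ directly by interpreting $d_2^{0,1}\colon \Pic(X^{\times 2})_n^{S_2}\to H^2(S_2,\mu_n)$ as the map sending an invariant line bundle $L$ to the extension class of its Mumford theta group $1\to\mu_n\to Mum(L)\to S_2\to 1$; since every $L\in\Pic(X^{\times 2})_n^{S_2}$ has the form $(L_0,L_0,0)$ with trivial $S_2$-action, the canonical identity $L\to g^*L$ splits the extension and $d_2^{0,1}=0$. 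This pins down $E_\infty^{2,0}=\mu_2$ exactly, and the paper then checks that this $\mu_2$ coincides with the $\mu_2=\XX^*(S_2)$ sitting inside $NS([X^{\times 2}/S_2])\otimes\ZZ/n$ under the Kummer map. With that identification the quotient is clean and (\ref{2mef}) drops out; without it your snake-lemma bookkeeping cannot be completed, because you have no control over whether the two $\mu_2$'s match or over the possible discrepancy between $NS(X^{\times 2})^{S_2}\otimes\ZZ/n$ and $(NS(X^{\times 2})\otimes\ZZ/n)^{S_2}$.
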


\begin{proof} Let $n=\ell^m$ for $m \geq 1$. Consider the Leray spectral sequence for $\pi: [X^{\times 2}/S_2] \ra BS_2$ with values in $\mu_n$ in the big \'etale site of these stacks
\begin{equation}
H^p_{\acute{E}t}(BS_2,R^q \pi_* \mu_n) \implies H^{p+q}_{\acute{E}t}([X^{\times 2}/S_2],\mu_n).
\end{equation} 
By the proper base-change theorem in \'etale cohomology \cite[Expos\'e XII,XIII]{SGA4} \cite[(11.3.1)]{tamme}, the fibers of $R^n \pi_* \mu_n$ over $\Spec(k) \ra [\Spec(k)/S_2]=BS_2$ identify with $H^n(X^{\times 2}, \mu_n)$. So for $n=\{0,1,2 \}$ these are isomorphic to $\mu_n, \Pic(X^{\times 2})_n$ and $ H^2(X^{\times 2},\mu_n)$ respectively.  They have action induced by the $S_2$ action on $X^{\times 2}$. Since a sheaf on $BS_2$ is a sheaf on $\Spec(k)$ together with a $S_2$-action, so this describes the sheaves $R^n \pi_* \mu_n$ on $BS_2$ completely.

Thus $ E^{2,0}_2=H^2_{\acute{E}t}(BS_2,R^0 \pi_* \mu_n)=H^2(S_2,H^0(X^{\times 2},\mu_n))=H^2(S_2,\mu_n)$. So by \cite[Theorem 7.1]{maclane}, we have 
\begin{eqnarray}
E^{2,0}_2 = \mu_2 \quad \text{for} \quad \ell=2 \\  0 \quad \text{for} \quad \ell \neq 2.
\end{eqnarray}
Similarly 
\begin{equation} \label{e112=0}
E^{1,1}_2=H^1_{\acute{E}t}(BS_2,R^1 \pi_* \mu_n)=H^1(S_2,\Pic(X^{\times 2})_n)\stackrel{Prop \ref{EFvanish}}{=} 0.
\end{equation}
Finally
\begin{equation}
E^{0,2}_2=H^0_{\acute{E}t}(BS_2,R^2 \pi_* \mu_n)= H^0(S_2,H^2(X^{\times 2}, \mu_n))= H^2(X^{\times 2},\mu_n)^{S_2}.
\end{equation}
Also
\begin{equation}
E^{2,1}_2=H^2_{\acute{E}t}(BS_2,R^1 \pi_* \mu_n)=H^2(S_2,\Pic(X^{\times 2})_n)\stackrel{Prop \ref{EFvanish}}{=} 0.
\end{equation}
Thus 
\begin{equation} \label{e022e023} E^{0,2}_3=E^{0,2}_2
\end{equation} We want to analyse $d^{0,2}_3:E^{0,2}_3 \ra E^{3,0}_3$. Now $$E^{3,0}_2=H^3(BS_2,H^0(X^{\times 2}, \mu_n))=H^3(S_2,\mu_n).$$ So by \cite[Theorem 7.1]{maclane}, we get
\begin{eqnarray}
E^{3,0}_2= \mu_2 \quad  \quad \ell=2 \\= 0 \quad  \quad \ell \neq 2.
\end{eqnarray}
Thus for $\ell$ odd, we have $E^{0,2}_4=E^{0,2}_3$. Thus we have
\begin{equation} \label{isomun}
H^2_{\acute{E}t}([X^{\times 2}/S_2],\mu_n)= H^2_{\acute{E}t}(X^{\times 2},\mu_n)^{S_2} \quad \text{for} \quad \ell \neq 2.
\end{equation}
We will now compute the cohomological Brauer group of the stack $[X^{\times 2}/S_2]$. We begin with two preparations which hold both for $\ell \neq 2$ and $\ell=2$ cases. In the proof of Proposition \ref{A1}, we showed that the exact sequence (\ref{picwithlin}):
\begin{equation*}  0 \ra \mu_2 \ra \Pic_{S_2}(X^{\times 2}) \ra \Pic(X^{\times 2})^{S_2} \ra 0
\end{equation*}
is split exact. Thus algebraically trivial line bundles in $\Pic_{S_2}(X^{\times 2})$ and $\Pic(X^{\times 2})^{S_2}$ are isomorphic. Further by (\ref{stackwithlin}) we have $\Pic_{S_2}(X^{\times 2})=\Pic([X^{\times 2}/S_2])$. Thus we get the split exact sequence
\begin{equation}
0 \ra \mu_2 \ra NS([X^{\times 2}/S_2]) \ra NS(X^{\times 2})^{S_2} \ra 0.
\end{equation}
Upon tensoring with $\ZZ/n$ we get 
\begin{eqnarray} 
\label{nsstacks2invnot2} NS([X^{\times 2}/S_2]) \otimes \ZZ/n \simeq NS(X^{\times 2})^{S_2} \otimes \ZZ/n \quad \text{for} \quad \ell \neq 2 \\
\label{nsstacksinv2} NS([X^{\times 2}/S_2]) \otimes \ZZ/n \simeq NS(X^{\times 2})^{S_2} \otimes \ZZ/n \oplus \mu_2 \quad \text{for} \quad \ell=2
\end{eqnarray}

Now we come to the second preparation. By the Kummer exact sequence we get
\begin{eqnarray} \label{kummerseqgen}
0 \ra NS(X^{\times 2}) \otimes \ZZ/n \ra H^2(X^{\times 2}, \mu_n) \ra Br'(X^{\times 2})_n \ra 0 \\
 \label{kummerforstack} \qquad \qquad
0 \ra NS([X^{\times 2}/S_2]) \otimes \ZZ/n \ra H^2([X^{\times 2}/S_2],\mu_n) \ra Br'([X^{\times 2}/S_2])_n \ra 0.
\end{eqnarray}

Now we start computing the cohomological Brauer group. We want to take invariants under $S_2$ action. We first take the case $\ell \neq 2$.
 Since $$H^1(S_2,NS(X^{\times 2})  \otimes \ZZ/n)=0$$
because $NS(X^{\times 2}) \otimes \ZZ/n$ has no elements of two torsion, so we get
\begin{equation} \label{s2kummer}
0 \ra NS(X^{\times 2})^{S_2} \otimes \ZZ/n \ra H^2(X^{\times 2}, \mu_n)^{S_2} \ra Br' (X^{\times 2})_n^{S_2} \ra 0.
\end{equation}

 Quotienting (\ref{isomun}) 
by (\ref{nsstacks2invnot2}), using (\ref{s2kummer}) and (\ref{kummerforstack}) the induced map
\begin{equation}
Br'([X^{\times 2}/S_2])_n \ra Br'(X^{\times 2})_n^{S_2} \quad \text{for} \quad \ell \neq 2.
\end{equation}
becomes an isomorphism. This establishes (\ref{oddisoinv}).
Now let $\ell=2$. We have
\begin{equation} \label{d023}
d^{0,2}_3: H^2(X^{\times 2}, \mu_n)^{S_2}= E^{0,2}_2 \stackrel{(\ref{e022e023})}{=} E^{0,2}_3 \ra E^{3,0}_3 = \mu_2.
\end{equation}
We have 
\begin{equation} \label{e02infty} E^{0,2}_\infty= E^{0,2}_4=\ker(d^{0,2}_3).
\end{equation}
Now $E^{0,1}_2=H^0(BS_2,R^1 \pi_* \mu_n)=\Pic(X^{\times 2})_n^{S_2}=\Pic(X)_n$. Further the map 
\begin{equation}
d^{0,1}_2: \Pic(X^{\times 2})_n^{S_2} \ra H^2(BS_2,\mu_n)
\end{equation}
associates to a $S_2$-invariant line bundle the extension class of its theta group scheme \cite[\S 23]{mumfordablvar}. Namely, given a line bundle $L \in \Pic(X^{\times 2})_n^{S_2}$ consider the group scheme $$Mum(L)=\{ (g,\theta) | \theta: L \ra g^* L \quad \text{is an isomorphism of $\mu_n$ bundles} \}.$$ The map $d^{0,1}_2$ associates 
\begin{equation}
L \ms [1 \ra \mu_n \ra Mum(L) \ra S_2 \ra 0] \in H^2(BS_2,\mu_n).
\end{equation}
(cf \cite{me} for a similar $d^{0,1}_2$ computation result. Although the contexts are different in details but the formal context of $d^{0,1}_2$ computation is the same.)

 By the proof of Proposition \ref{EFvanish}, we have $$\Pic(X^{\times 2})_n^{S_2}=\{ (L,L,0) | L \in \Pic(X)_n \}= \Pic(X)_n.$$ So the $S_2$ action on these invariant line bundles is the trivial action. For $L \in \Pic(X)_n^{S_2}$ and $g \in S_2$ we have a canonical isomorphism $L \ra g^*L$ given by identity. So the extension class is also trivial for all line bundles. Thus we get $
d^{0,1}_2=0$. Thus 
\begin{equation} \label{e20infty} E^{2,0}_\infty= E^{2,0}_3=E^{2,0}_2=\mu_2.
\end{equation} Thus since $E^{1,1}_\infty=E^{1,1}_2=0$ by (\ref{e112=0}), so by (\ref{e20infty}) and (\ref{e02infty}) we get
\begin{equation} \label{mu2irritating}
1 \ra \mu_2 \ra H^2([X^{\times 2}/S_2],\mu_n) \ra ker(d^{0,2}_3: H^2(X^{\times 2}, \mu_n)^{S_2} \ra \mu_2) \ra 0.
\end{equation}
Now we want to pass from $\mu_n$ coefficients to the cohomological Brauer group.  Under the composition $H^2([X^{\times 2}/S_2],\mu_n) \ra ker \hra H^2(X^{\times 2}, \mu_n)^{S_2}$ the image of $NS([X^{\times 2}/S_2]) \otimes \ZZ/n$ is $NS(X^{\times 2}) \otimes \ZZ/n$. So by (\ref{nsstacksinv2}), it has kernel $\mu_2$. So the $\mu_2$ of (\ref{mu2irritating}) and (\ref{nsstacksinv2}) may be identified. Taking quotient of (\ref{mu2irritating}) by (\ref{nsstacksinv2}),  by (\ref{kummerforstack}) we get an isomorphism
\begin{equation}
Br'([X^{\times 2}/S_2])_n \ra ker( \frac{H^2(X^{\times 2},\mu_n)^{S_2}}{NS(X^{\times 2})^{S_2} \otimes \ZZ/n} \ra \mu_2)_n
\end{equation}
Now  we only want to express the second term differently.
Taking the long exact sequence associated to taking $S_2$-invariants in (\ref{kummerseqgen}) we get $$ NS(X^{\times 2})^{S_2} \otimes \ZZ/n \ra H^2(X^{\times 2}, \mu_n)^{S_2} \ra Br'(X^{\times 2})_n^{S_2} \ra H^1(S_2, NS(X^{\times 2}) \otimes \ZZ/n) $$ Therefore if we set
$K_{2^m}:=\ker(Br'(X^{\times 2})_n^{S_2} \ra H^1(S_2, NS(X^{\times 2}) \otimes \ZZ/n))$
we get the exact sequence
\begin{equation}
0 \ra Br'([X^{\times 2}/S_2])_n \ra  K_{2^m} \ra \mu_2
\end{equation}
This establishes (\ref{2mef}).
\end{proof}

\section{The case $k=\CC$}
\subsection{Generalities}
 For a complex analytic space $M$ the analytic Brauer group is denoted $Br_{an}(M)$ and the  cohomological Brauer group is denoted $Br_{an}'(M)$. We have $Br_{an}'(M)=H^2(M,\cO^*_M)_{tor}$,  where $\cO^*_M$ denotes the sheaf of holomorphic functions with values in $\CC \setminus \{0\}$. We now recall a Brauer sequence due to S.Schroer. It provides the  fundamental framework in terms of which the results of this section will be expressed. 
\begin{prop} \label{sch} \cite[Prop 1.1]{schroer} Let $M$ be any complex-analytic space.  Then we have the exact sequence
\begin{equation}
0 \ra H^2(M,\ZZ)/NS(M) \otimes \QQ/\ZZ \ra Br'_{an}(M) \ra H^3(M,\ZZ)_{tor} \ra 0.
\end{equation}
\end{prop}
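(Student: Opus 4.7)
The plan is to derive Schroer's sequence by running the long exact cohomology sequence of the exponential sheaf sequence
\[
0 \ra \ZZ \ra \cO_M \ra \cO_M^* \ra 0
\]
(where $\cO_M \ra \cO_M^*$ is $f \mapsto \exp(2 \pi i f)$) and then restricting to torsion. The relevant segment reads
\[
H^1(M,\cO_M^*) \stackrel{c_1}{\ra} H^2(M,\ZZ) \ra H^2(M,\cO_M) \ra H^2(M,\cO_M^*) \ra H^3(M,\ZZ) \ra H^3(M,\cO_M).
\]
By definition $NS(M)$ is the image of $c_1$, so there is an injection $T:=H^2(M,\ZZ)/NS(M) \hra H^2(M,\cO_M)$. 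Setting $Q := H^2(M,\cO_M)/T$ and $K := \ker(H^3(M,\ZZ) \ra H^3(M,\cO_M))$, the long exact sequence breaks into the short exact sequence
\[
0 \ra Q \ra H^2(M,\cO_M^*) \ra K \ra 0.
\]

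The key technical observation is that $Q$ is divisible, being a quotient of the $\CC$-vector space $H^2(M,\cO_M)$. Applying the snake lemma to multiplication by $n$ on the above short exact sequence and using $Q/n = 0$ yields a short exact sequence $0 \ra Q_n \ra H^2(M,\cO_M^*)_n \ra K_n \ra 0$ on $n$-torsion subgroups; passing to the colimit over $n$ then produces
\[
0 \ra Q_{tor} \ra Br'_{an}(M) \ra K_{tor} \ra 0.
\]

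It remains to identify the two outer terms intrinsically. Since $H^3(M,\cO_M)$ is a $\CC$-vector space it is torsion-free, so $H^3(M,\ZZ)_{tor} \subset K$ and in fact $H^3(M,\ZZ)_{tor} = K_{tor}$. For the left end, $T$ sits inside a $\CC$-vector space and is therefore torsion-free; a direct check shows that for any $\QQ$-vector space $V$ containing a torsion-free subgroup $T$ one has $(V/T)_{tor} = T_\QQ/T = T \otimes \QQ/\ZZ$. Applied to $V = H^2(M,\cO_M)$, this gives $Q_{tor} = (H^2(M,\ZZ)/NS(M)) \otimes \QQ/\ZZ$, and substituting yields the stated sequence.

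I do not anticipate a serious obstacle: the argument is essentially bookkeeping around the exponential sequence. The one step where care is required is the surjectivity onto $K_{tor}$, which is precisely where the divisibility of $Q$ enters; the rest consists of standard identifications of kernels and cokernels in the exponential long exact sequence.
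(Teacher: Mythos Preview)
Your argument is correct and is the standard derivation of this sequence from the exponential exact sequence; the paper itself does not supply a proof but merely cites \cite[Prop 1.1]{schroer}, so there is nothing in the paper to compare against beyond noting that your approach is exactly the one Schroer uses. The only delicate point---that divisibility of $Q$ forces surjectivity onto $K_{tor}$ and that $(V/T)_{tor}\cong T\otimes\QQ/\ZZ$ for a torsion-free subgroup $T$ of a $\QQ$-vector space $V$---you have handled correctly.
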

Recall that given any scheme $Y$ of finite type over $\CC$, we can associate to it a complex analytic space \cite[Appendix B, page 439]{hart}. If $Y$ is compact, then by \cite[Prop 1.3]{schroer} and \cite[Prop 1.4]{schroer} the natural morphism $Br'(Y) \ra Br_{an}'(Y)$ and $Br(Y) \ra Br_{an}(Y)$ are isomorphisms. In this paper, we will apply these results to the cases $Y$ equals $X^{[2]}$ and $X^{(2)}$ whose underlying complex analytic spaces are compact. Further we will refer to $H^3(M,\ZZ)_{tor}$ as the finite torsion subgroup part (or sometimes abusively as simply torsion part) and to $H^2(M,\ZZ)/NS(M) \otimes \QQ/\ZZ$ as the divisible subgroup part.
\subsection{Comparison of analytic Brauer groups}
Our main aim is to prove the following  theorem
\begin{thm}  \label{mtcomplex} Over complex numbers, the morphism $\omega_2:X^{[2]} \ra X^{(2)}$ induces an isomorphism on analytic cohomological Brauer groups.
\end{thm}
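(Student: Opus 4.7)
My plan is to apply the Schroer sequence of Proposition~\ref{sch} to both $X^{(2)}$ and $X^{[2]}$, obtaining a commutative diagram of short exact sequences, and then to show that $\omega_2^*$ induces isomorphisms on the divisible end term $H^2(-,\ZZ)/NS(-) \otimes \QQ/\ZZ$ and on the finite torsion end term $H^3(-,\ZZ)_{tor}$ separately. The five lemma then produces the isomorphism on $Br'_{an}$.

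To analyse these end terms I would run the Leray spectral sequence
$$
E_2^{p,q} = H^p\bigl(X^{(2)}, R^q\omega_{2,*}\ZZ\bigr) \Longrightarrow H^{p+q}(X^{[2]},\ZZ)
$$
in the analytic topology. Since $\omega_2$ is proper and its reduced fibres are $\PP^1$ over the diagonal $i: X \hra X^{(2)}$ and points elsewhere, proper base change gives $R^0\omega_{2,*}\ZZ = \ZZ$, $R^1\omega_{2,*}\ZZ = 0$ (because $H^1(\PP^1,\ZZ)=0$) and $R^2\omega_{2,*}\ZZ = i_*\ZZ$ as a constant sheaf on the diagonal (using local triviality of the $\PP^1$-bundle $\PP(\Omega_X)\to X$); higher $R^q$ vanish. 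Hence only the rows $q=0$ and $q=2$ of $E_2$ are nonzero, so $d_2 = 0$ and the only possibly nontrivial differentials are $d_3^{p,2}: H^p(X,\ZZ) \to H^{p+3}(X^{(2)},\ZZ)$.

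Next I would show $d_3^{0,2} = 0$. The key input is that $[\PP(\Omega_X)] \in H^2(X^{[2]},\ZZ)$ restricts on each $\PP^1$-fibre to $\cO_{\PP^1}(-1)$, which is a generator of $H^2(\PP^1,\ZZ)$ (standard normal-bundle computation for the exceptional divisor of the resolution $\omega_2$). Thus the edge map $H^2(X^{[2]},\ZZ) \to E_\infty^{0,2} \subset E_2^{0,2} = \ZZ$ is surjective, forcing $E_\infty^{0,2} = \ZZ$ and $d_3^{0,2}=0$. The spectral sequence then yields short exact sequences
$$
0 \lra H^n(X^{(2)},\ZZ) \xrightarrow{\;\omega_2^*\;} H^n(X^{[2]},\ZZ) \lra Q_n \lra 0,
$$
with $Q_2 = \ZZ$ and $Q_3 \subseteq H^1(X,\ZZ)$. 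For $n=2$ the sequence splits via $[\PP(\Omega_X)]$, giving $H^2(X^{[2]},\ZZ) = H^2(X^{(2)},\ZZ) \oplus \ZZ[\PP(\Omega_X)]$; combining with Proposition~\ref{A1}, which states $NS(X^{[2]}) = NS(X^{(2)}) \oplus \ZZ$ with the same generator, I conclude $\omega_2^*$ induces an isomorphism $H^2(X^{(2)},\ZZ)/NS(X^{(2)}) \xrightarrow{\sim} H^2(X^{[2]},\ZZ)/NS(X^{[2]})$, and hence on the divisible parts after $\otimes_{\ZZ}\QQ/\ZZ$. For $n=3$, since $H^1(X,\ZZ) = \Hom(H_1(X,\ZZ),\ZZ)$ is torsion-free, the subgroup $Q_3$ is torsion-free, so passing to torsion yields $H^3(X^{(2)},\ZZ)_{tor} \xrightarrow{\sim} H^3(X^{[2]},\ZZ)_{tor}$.

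The main obstacle is the justification that $d_3^{0,2}$ vanishes, which I handle using the exceptional class $[\PP(\Omega_X)]$; once this is in place, the rest is a straightforward comparison using Proposition~\ref{A1}, universal coefficients to kill torsion in $H^1(X,\ZZ)$, and the functoriality of Proposition~\ref{sch} combined with the five lemma.
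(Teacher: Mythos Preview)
Your proposal is correct and follows essentially the same route as the paper: the paper also applies Schr\"oer's sequence to both spaces, runs the Leray spectral sequence for $\omega_2$ with $\ZZ$-coefficients (Propositions~\ref{onemoreh2} and~\ref{H_3tor}), uses the exceptional class $[\PP(\Omega_X)]$ to force surjectivity onto $E_2^{0,2}=\ZZ$ and hence $d_3^{0,2}=0$, invokes Proposition~\ref{A1} for the $NS$-comparison, and uses torsion-freeness of $H^1(X,\ZZ)$ to handle the degree-$3$ quotient. Your write-up merely streamlines these steps into a single argument.
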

 Instead of the \'etale topology on schemes, we will use complex topology.

Let us recall some facts about proper base change theorem in topology from \cite[Chapter 17]{milneLEC}. A continous map is called proper if it is universally closed. Let $\pi: X \ra S$ be a continous map and $\cF$ a sheaf on $X$. For any $s \in S$, we set
\begin{equation}
(R^r \pi_* \cF)_s = \varinjlim H^r(\pi^{-1}(V),\cF),
\end{equation}
where the limit is taken over open neighbourhoods $V$ of $s \in S$. We quote
\begin{thm} \cite[Theorem 17.2,17.3]{milneLEC} Let $\pi: X \ra S$ be a proper map where $S$ is a locally compact topological space. For any sheaf $\cF$ on $X$ and $s \in S$, we have
\begin{equation}
(R^r \pi_* \cF)_s = H^r(X_s,\cF)
\end{equation}
where $X_s = \pi^{-1}(s)$. Consider the cartesian square
\begin{equation}
\xymatrix{
X' \ar[r]_{f'} \ar[d]^{\pi'} & X \ar[d]^{\pi} \\
T \ar[r]^f & S 
}
\end{equation}
Then the canonical base-change homomorphism $$f^* (R^r \pi_* \cF) \ra R^r \pi'_* (f^{'*} \cF)$$ is an isomorphism for any $r$.
\end{thm}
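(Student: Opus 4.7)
The plan is to establish the stalk formula $(R^r\pi_*\cF)_s = H^r(X_s,\cF|_{X_s})$ first, and then deduce the base-change isomorphism from it by a stalkwise comparison. The argument rests on two ingredients: a cofinality statement that comes from the properness of $\pi$, and a continuity property of sheaf cohomology along decreasing families of open neighbourhoods of a compact set. Throughout I would assume $X$ is Hausdorff (which together with properness makes each fibre $X_s$ compact).

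First I would prove the cofinality lemma: as $V$ ranges over open neighbourhoods of $s$ in $S$, the sets $\pi^{-1}(V)$ form a cofinal family in the collection of open neighbourhoods of $X_s$ in $X$. This is the classical closed-map lemma applied to proper maps. Given an open $U \supseteq X_s$, the set $\pi(X\setminus U)$ is closed in $S$ (because $\pi$ is closed) and does not contain $s$ (since $\pi^{-1}(s) = X_s \subseteq U$), so $V := S \setminus \pi(X\setminus U)$ is an open neighbourhood of $s$ with $\pi^{-1}(V) \subseteq U$. Hence the direct limit defining the stalk $(R^r\pi_*\cF)_s = \varinjlim_V H^r(\pi^{-1}(V),\cF)$ is cofinal with the direct limit $\varinjlim_{U \supseteq X_s} H^r(U,\cF)$.

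Second I would invoke the continuity principle that for a compact subset $K$ of a paracompact Hausdorff space $X$ and any sheaf $\cF$ on $X$,
\[
\varinjlim_{U \supseteq K} H^r(U,\cF) \;=\; H^r(K,\cF|_K).
\]
The standard proof goes by choosing a flabby (or soft) resolution $\cF \to \cI^\bullet$ on $X$, using that the restriction of a flabby sheaf to a locally closed subset is flabby and that for a flabby sheaf $\cG$ on such an $X$ one has $\varinjlim_{U \supseteq K}\Gamma(U,\cG) = \Gamma(K,\cG|_K)$, and finally that filtered colimits are exact so they commute with taking cohomology of the complex $\Gamma(-,\cI^\bullet)$. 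Combining this with the cofinality step gives $(R^r\pi_*\cF)_s = H^r(X_s,\cF|_{X_s})$, settling the first assertion.

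For the base-change statement I would work stalkwise at $t\in T$ with image $s = f(t)$. Properness is preserved under base change, so $\pi'$ is proper, and the fibre $X'_t$ is canonically homeomorphic to $X_s$ via $f'$, with $(f'^*\cF)|_{X'_t}$ matching $\cF|_{X_s}$ under this homeomorphism. Applying the stalk formula to both $\pi$ and $\pi'$ yields
\[
(f^* R^r \pi_* \cF)_t \;=\; H^r(X_s,\cF|_{X_s}), \qquad (R^r \pi'_* f'^*\cF)_t \;=\; H^r(X'_t,(f'^*\cF)|_{X'_t}),
\]
and a diagram chase, tracing the construction of the base-change map through the adjunction $\cF \to f'_*f'^*\cF$ and the functoriality of the cofinal limits above, shows that it induces the identity under these identifications. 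The main technical obstacle is the continuity statement for sheaf cohomology on the compact fibre: this is where paracompactness/Hausdorffness enter and where the proof genuinely uses the topology of $X$ rather than just formal properties of $\pi_*$; everything else (cofinality, the stalkwise comparison for base change) is then a formal consequence.
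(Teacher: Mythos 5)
The paper does not prove this statement: it is quoted verbatim from Milne's lecture notes (\cite[Theorems 17.2, 17.3]{milneLEC}) and used as a black box. Your argument --- cofinality of the family $\pi^{-1}(V)$ among neighbourhoods of the compact fibre via the closed-map lemma, the continuity of sheaf cohomology over a compact (taut) subset, and the stalkwise deduction of base change --- is correct and is essentially the standard proof given in the cited source, so there is nothing to contrast with here beyond noting that the separation/paracompactness hypotheses you flag are indeed where the real content lies.
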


Now we return to the diagram (\ref{diag2}).
\begin{prop} \label{onemoreh2} We have $H^0(X^{(2)},R^2 \omega_{2*} \ZZ)=H^0(X,R^2 \pi_* \ZZ) \simeq \ZZ$ and a short exact sequence 
\begin{equation} \label{eexactness} 0 \ra H^2(X^{(2)},\ZZ) \ra H^2(X^{[2]},\ZZ) \ra H^0(X^{(2)},R^2 \omega_{2*} \ZZ) \ra 0.
\end{equation}
\end{prop}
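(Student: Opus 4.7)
The plan is to compute $R^q\omega_{2*}\ZZ$ in the classical (complex) topology and then read off the assertion from the Leray spectral sequence, using throughout that $\omega_2$ is proper so that the topological proper base change theorem quoted just above the proposition applies.

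First I would verify that $\omega_{2*}\ZZ = \ZZ$ and that $R^q\omega_{2*}\ZZ$ for $q>0$ is supported on $i(X)\subset X^{(2)}$. The former is immediate from connectedness of the fibres (a reduced point, or $\PP^1$). For the latter, proper base change gives the vanishing of $R^q\omega_{2*}\ZZ$ on $U=X^{(2)}\setminus X$; then the mapping-cylinder/decomposition argument used in Proposition \ref{pushforwardbyomega2} yields $R^q\omega_{2*}\ZZ \simeq i_* R^q\pi_*\ZZ$, where $\pi:\PP(\Omega_X)\to X$ is the $\PP^1$-bundle of the cartesian square (\ref{diag2}). A second application of proper base change on $\pi$ identifies the stalks with $H^q(\PP^1,\ZZ)$, giving $R^1\pi_*\ZZ=0$ and $R^2\pi_*\ZZ=\ul\ZZ$ (constant sheaf on $X$, using the canonical orientation of each fibre). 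Consequently $H^0(X^{(2)},R^2\omega_{2*}\ZZ) = H^0(X,R^2\pi_*\ZZ) = \ZZ$, which is the first assertion.

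Next I would form the Leray spectral sequence
\begin{equation*}
E_2^{p,q} = H^p(X^{(2)}, R^q\omega_{2*}\ZZ) \implies H^{p+q}(X^{[2]},\ZZ).
\end{equation*}
The vanishing $R^1\omega_{2*}\ZZ=0$ kills $E_2^{0,1}$, $E_2^{1,1}$ and $E_2^{2,1}$, so the seven-term low-degree sequence (\ref{seventerm}) collapses to
\begin{equation*}
0 \to H^2(X^{(2)},\ZZ) \to \ker\bigl(H^2(X^{[2]},\ZZ) \to H^0(X^{(2)},R^2\omega_{2*}\ZZ)\bigr) \to 0,
\end{equation*}
which is precisely the left-exact half of the desired sequence. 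What remains is surjectivity of the edge map $H^2(X^{[2]},\ZZ) \to E_2^{0,2} = \ZZ$; equivalently, vanishing of the only potentially nonzero outgoing differential $d_3: E_3^{0,2}=\ZZ \to E_3^{3,0}=H^3(X^{(2)},\ZZ)$.

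The main obstacle is this surjectivity. My approach would be to produce a class in $H^2(X^{[2]},\ZZ)$ whose restriction to a $\PP^1$-fibre of $\omega_2$ generates $H^2(\PP^1,\ZZ)\cong\ZZ$; such a class maps to a generator of $E_2^{0,2}$ under the edge map and simultaneously forces $d_3=0$. The candidate is the first Chern class of the extra $\ZZ$-generator of $\Pic(X^{[2]})$ supplied by Fogarty's decomposition (\ref{picxd}) (cf.\ the remark following Proposition \ref{A1}). Realising $X^{[2]}$ as the $S_2$-quotient of $\Bl_\Delta(X\times X)$, the divisor class $[\PP(\Omega_X)]$ restricts to $-2$ on each $\PP^1$-fibre (because the quotient map is ramified along the exceptional divisor), so the honest Fogarty generator is the half-class $\delta$ with $2\delta=[\PP(\Omega_X)]$, whose restriction to each fibre is $\pm 1$. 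Pushing $c_1(\delta)\in H^2(X^{[2]},\ZZ)$ through the edge map then produces a generator of $\ZZ=H^0(X^{(2)},R^2\omega_{2*}\ZZ)$, and the sequence (\ref{eexactness}) is short exact.
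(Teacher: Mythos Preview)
Your argument follows the paper's proof almost verbatim: compute $R^q\omega_{2*}\ZZ$ via topological proper base change, feed this into the Leray spectral sequence, kill $E_2^{0,1}$ and $E_2^{1,1}$ using $R^1\omega_{2*}\ZZ=0$, identify $R^2\pi_*\ZZ$ with the constant sheaf $\ul\ZZ$, and then establish surjectivity of the edge map by exhibiting a cohomology class that hits a generator of the fibre $H^2(\PP^1,\ZZ)$.

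The one place you diverge is the surjectivity step, and here your version is actually sharper than the paper's. The paper simply asserts that the divisor class $[\PP(\Omega_X)]\in H^2(X^{[2]},\ZZ)$ ``maps to the generator''. You instead note (correctly) that under the realisation $X^{[2]}=\Bl_\Delta(X\times X)/S_2$ the double cover is ramified along the exceptional divisor, so $q^*[\PP(\Omega_X)]=2[E]$ and hence $[\PP(\Omega_X)]$ restricts to degree $-2$ on a fibre, not $\pm 1$; the genuine Fogarty generator is the half-class $\delta$ with $2\delta=[\PP(\Omega_X)]$, and it is $c_1(\delta)$ that surjects onto $\ZZ$. This is the standard description of the extra $\ZZ$ in $\Pic(X^{[d]})$ (e.g.\ via the determinant of the tautological rank-$d$ bundle), and it makes the surjectivity argument watertight. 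The paper's phrasing glosses over this factor of $2$; your treatment repairs it while keeping the same overall strategy.
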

\begin{proof}Now $\omega_{2*}\ZZ$ identifies with the sheaf associated to $\ZZ$. 
 We consider the commutative diagram (\ref{diag2}) by taking underlying complex analytic spaces. In the topological category the map $\omega_2$ is proper. Therefore the canonical base-change homomorphism
\begin{equation} \label{basechangehom}
 i^* R^q \omega_{2,*} \ZZ \ra R^q \pi_* j^* \ZZ
\end{equation}
is an isomorphism for any $q$ by \cite[Theorem 17.3]{milneLEC}. By \cite[Theorem 17.2]{milneLEC}, for any sheaf $\cF$ on $X^{[2]}$, we have $(R^q \omega_{2,*} \cF)_{s}=H^q(\omega_2^{-1}(s),\cF)$. Let $U:= X^{(2)} \setminus X$. Since $\omega_2^{-1} U \ra U$ is an isomorphism, so it follows that the sheaf $ R^q \omega_{2,*} \ZZ$ is supported on $i(X)$ for $q \geq 1$. So 
the morphism obtained from (\ref{basechangehom}) after adjunction 
\begin{equation} \label{romega2}
R^q \omega_{2*} \ZZ \ra i_* R^q \pi_{*} j^* \ZZ
\end{equation}
is an isomorphism for any $q \geq 1$.

 Let us consider the Leray-Serre spectral sequence for $\omega_2: X^{[2]} \ra X^{(2)}$
\begin{equation} \label{lerayforomega2} E^{p,q}_2=H^p(X^{(2)},R^q \omega_{2*} \ZZ) \implies E^{p+q}=H^*(X^{[2]},\ZZ)
\end{equation}  where we denote by $\ZZ$ the constant sheaf defined by the integers. Consider the associated $7$-term long-exact sequence (\cite[page 371, Cor 3.2]{merkur})

$$0 \ra E^{1,0}_2 \ra E^1_\infty \ra E^{0,1}_2 \ra E^{2,0}_2 \ra ker(E^2_\infty \ra E^{0,2}_2) \ra E^{1,1}_2$$

We have 
\begin{equation} \label{r1omega2z} R^1 \omega_{2*} \ZZ \stackrel{(\ref{romega2})}{=} i_* R^1 \pi_* j^* \ZZ=0,
\end{equation} and hence $E^{0,1}_2=E^{1,1}_2=0$. 
We get an isomorphism of groups
\begin{equation} \label{6term}
E^{2,0}_2 \ra ker(E^2_{\infty} \ra E^{0,2}_2).
\end{equation}
Since $E^2_\infty=H^2(X^{[2]},\ZZ)$, we deduce the following exact sequence
\begin{equation} \label{7term}
0 \ra H^2(X^{(2)},\omega_{2,*} \ZZ) \ra H^2(X^{[2]},\ZZ) \ra H^0(X^{(2)},R^2 \omega_{2,*} \ZZ) 
\end{equation}

Consider the sheaf $R^2 \pi_{*} \ZZ$. Its fiber over $x \in X$ is  $H^2(\PP^1(\Omega_{X,x}),\ZZ)$. It identifies canonically with $\ZZ$ since $\PP^1_\CC$ is orientable. Further  since $\pi$ is an $\PP^1$-fibration, so on the overlaps the gluing functions preserve the orientation class of the fiber. Thus 
\begin{equation} \label{R2piZ} R^2 \pi_{*} \ZZ = \ul{\ZZ}
\end{equation} where $\ul{\ZZ}$ is  the constant sheaf defined by integers $\ZZ$. Therefore 
\begin{equation} H^0(X,R^2 \pi_{*} \ZZ)=\ZZ.
\end{equation}

 Therefore by (\ref{romega2}), we have 
 \begin{equation} H^0(X^{(2)},R^2 \omega_{2*} \ZZ) = H^0(X,R^2 \pi_{*} \ZZ)= \ZZ.
 \end{equation}
  
By (\ref{7term}) we have the exact sequence
\begin{equation} \label{gamma}
0 \ra H^2(X^{(2)},\ZZ) \ra H^2(X^{[2]},\ZZ) \stackrel{\gamma}{\ra} H^0(X^{(2)},R^2 \omega_{2*} \ZZ) \simeq \ZZ
\end{equation}
Consider $\PP(\Omega_X) \hra X^{[2]}$ as a divisor in a smooth scheme. Since $\omega_{2}(\PP(\Omega_X))=X$ which is a codimension two subscheme of $X^{(2)}$, so the class $[\PP(\Omega_X)] \in H^2(X^{[2]},\ZZ)$ of $\PP(\Omega_X)$   does not come from $H^2(X^{(2)},\ZZ)$. Further under the composition $$H^2(X^{[2]},\ZZ) \ra H^0(X^{(2)},R^2 \omega_{2,*} \ZZ)=H^0(X,R^2 \pi_{*} \ZZ)=\ZZ$$
the  image of $[\PP(\Omega_X)]$ maps to the generator. 
This shows the surjectivity of $\gamma$. So (\ref{gamma}) is exact also on the right.
In other words, we have shown that (\ref{gamma}), or equivalently (\ref{eexactness}), is  an exact sequence. 
\end{proof}

\begin{prop} \label{H_3tor} The edge morphism $E^{3,0}_2 \ra E^3$ of the Leray spectral sequence for $\omega_2$ (cf (\ref{lerayforomega2}))  induces an isomorphism on torsion subgroups. In other words, the following edge map is an isomorphism
\begin{equation} H^3(X^{(2)}, \ZZ)_{tor}=H^3(X^{(2)},\omega_{2*}\ZZ)_{tor} \stackrel{edge}{\lra}  H^3(X^{[2]},\ZZ)_{tor}
\end{equation}
\end{prop}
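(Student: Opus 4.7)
The strategy is to run the Leray-Serre spectral sequence $E^{p,q}_2=H^p(X^{(2)},R^q\omega_{2*}\ZZ)\Rightarrow H^{p+q}(X^{[2]},\ZZ)$ along total degree $3$, show that the only possibly nonzero differential affecting $E^{3,0}$ vanishes, and then read off the torsion comparison from the resulting Leray filtration on $H^3(X^{[2]},\ZZ)$.

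First I identify the sheaves $R^q\omega_{2*}\ZZ$ for $q\in\{0,1,2,3\}$ by the same proper base change argument as in Proposition \ref{onemoreh2}. Namely $R^0\omega_{2*}\ZZ=\ZZ$, $R^1\omega_{2*}\ZZ=0$ by \eqref{r1omega2z}, $R^2\omega_{2*}\ZZ=i_*\ul{\ZZ}$ by \eqref{R2piZ}, and $R^q\omega_{2*}\ZZ=0$ for $q\geq 3$ since the fibers of $\omega_2$ are at most $\PP^1$ and hence have no cohomology in degrees $\geq 3$. Along $p+q=3$ this gives $(E^{0,3}_2,E^{1,2}_2,E^{2,1}_2,E^{3,0}_2)=(0,\,H^1(X,\ZZ),\,0,\,H^3(X^{(2)},\ZZ))$.

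Next I analyze the differentials touching $E^{3,0}$. All outgoing $d_r^{3,0}$ vanish trivially for $r\geq 2$ (target has negative second index). The incoming $d_2^{1,1}$ vanishes since $E^{1,1}_2=H^1(X^{(2)},R^1\omega_{2*}\ZZ)=0$, and for $r\geq 4$ the would-be source $E^{3-r,r-1}_r$ has negative first index. So the only possibly nonzero contribution is $d_3^{0,2}:\ZZ=E^{0,2}_3\to E^{3,0}_3=H^3(X^{(2)},\ZZ)$. To show $d_3^{0,2}=0$ I invoke Proposition \ref{onemoreh2}: the edge map $\gamma:H^2(X^{[2]},\ZZ)\twoheadrightarrow H^0(X^{(2)},R^2\omega_{2*}\ZZ)=\ZZ$ is surjective because $[\PP(\Omega_X)]$ hits the generator. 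Since this edge map factors through $E^{0,2}_\infty\hookrightarrow\ZZ$, it follows that $E^{0,2}_\infty=E^{0,2}_2=\ZZ$, which forces every outgoing $d_r^{0,2}$ to be zero; in particular $d_3^{0,2}=0$. Hence $E^{3,0}_\infty=E^{3,0}_2=H^3(X^{(2)},\ZZ)$, and the edge morphism $H^3(X^{(2)},\ZZ)\to H^3(X^{[2]},\ZZ)$ is injective with image the deepest filtration piece $F^3 H^3(X^{[2]},\ZZ)$.

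Finally, using $E^{0,3}_\infty=E^{2,1}_\infty=0$ the Leray filtration collapses to the short exact sequence
\[
0\to H^3(X^{(2)},\ZZ)\to H^3(X^{[2]},\ZZ)\to E^{1,2}_\infty\to 0,
\]
where $E^{1,2}_\infty\subset E^{1,2}_2=H^1(X,\ZZ)\cong\Hom(H_1(X,\ZZ),\ZZ)$, which is torsion-free. Consequently every torsion element of $H^3(X^{[2]},\ZZ)$ maps to zero in $E^{1,2}_\infty$ and thus lies in the image of the injective edge map from $H^3(X^{(2)},\ZZ)$; since the edge map is injective, its preimage is torsion as well. This yields the desired isomorphism $H^3(X^{(2)},\ZZ)_{tor}\xrightarrow{\sim} H^3(X^{[2]},\ZZ)_{tor}$. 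The only real obstacle is the vanishing of $d_3^{0,2}$, which is already encoded in Proposition \ref{onemoreh2}; the rest is a routine filtration chase combined with torsion-freeness of $H^1(X,\ZZ)$.
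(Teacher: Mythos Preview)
Your proof is correct and follows essentially the same route as the paper: you identify the higher direct images $R^q\omega_{2*}\ZZ$, use the vanishing $E^{1,1}_2=E^{2,1}_2=E^{0,3}_2=0$ coming from $R^1\omega_{2*}\ZZ=0$ and $R^{\geq 3}\omega_{2*}\ZZ=0$, invoke the surjectivity in Proposition~\ref{onemoreh2} to kill $d_3^{0,2}$, and then conclude from torsion-freeness of $E^{1,2}_\infty\subset H^1(X,\ZZ)$. The paper argues in the same order with the same inputs; your write-up is slightly more compact in collapsing the filtration directly into a short exact sequence, but the substance is identical.
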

\begin{proof}  
 We will first show that each one of $E_\infty^{0,3},E_\infty^{1,2}$, $E_\infty^{2,1}$ are torsion-free and then show that the torsion in  $E^{3,0}_2=H^3(X^{(2)},\omega_{2*}\ZZ)$ survives till $E_\infty^{3,0}$.

Now $E_2^{0,3}$ is  zero because the sheaf $R^q \omega_{2*} \ZZ=0$ for $q \geq 3$. 

Now $E^{1,2}_2=H^1(X^{(2)},R^2 \omega_{2*} \ZZ) \stackrel{(\ref{romega2})}{ =} H^1(X,R^2 \pi_* \ZZ) \stackrel{(\ref{R2piZ})}{=} H^1(X,\ZZ),$
which is always torsion-free by the universal coefficient theorem.

 Similarly $R^1 \omega_{2*} \ZZ=0$ by (\ref{r1omega2z}). So since $E^{-1,3}_2 \ra E^{1,2}_2 \ra E^{3,1}_2$, it follows that $$E^{1,2}_2 = E^{1,2}_3.$$ So $E^{1,2}_3$ is torsion-free. Now by $E^{-2,4}_3 \ra E^{1,2}_3 \ra E^{4,0}_3$ it follows that $$E^{1,2}_4 \hra E^{1,2}_3.$$ So $E^{1,2}_4$ is torsion-free. By $E^{-3,5}_4 \ra E^{1,2}_4 \ra E^{5,-1}_4$ it follows that $$E^{1,2}_\infty=E^{1,2}_4.$$ So $E^{1,2}_\infty$ is torsion-free.

Now $E^{2,1}_2=H^2(X,R^1 \omega_{2,*} \ZZ)=0$ since $R^1 \omega_{2*} \ZZ=0$ by (\ref{r1omega2z}).

Lastly we consider $E^{3,0}_2$. By $E^{1,1}_2 \ra E^{3,0}_2 \ra E^{5,-1}_2$, it follows that 
\begin{equation} \label{lastly} E^{3,0}_2=E^{3,0}_3.
\end{equation} Consider the differentials at level three
\begin{equation}  \label{d}
E^{-3,4}_3 \ra E^{0,2}_3 \stackrel{d}{\ra} E^{3,0}_3 \ra E^{6,-2}_3 
\end{equation} 
We claim that the differential $d: E^{0,2}_3 \ra E^{3,0}_3$ is zero. Now (\ref{eexactness}) can be restated as  the exactness of
\begin{equation} \label{eeexactness}
0 \ra E^{2,0}_2 \ra E^2_\infty \ra E^{0,2}_2 \ra 0.
\end{equation}
In general one has 
$
\xymatrix{
E^2_\infty \ar@{->>}[r] & E^{0,2}_\infty \ar@{^{(}->}[r] & \cdots E^{0,2}_4 \ar@{^{(}->}[r] & E^{0,2}_3 \ar@{^{(}->}[r] & E^{0,2}_2
}$.

Since both arrows above are edge morphisms, so by the surjectivity of the second edge morphism we have $$E^{0,2}_2=E^{0,2}_\infty.$$ 

Thus $E^{0,2}_3=E^{0,2}_4$. So $d$ in (\ref{d}) is zero.  So $E^{3,0}_4=E^{3,0}_3$. Thus we have 
$$E^{3,0}_\infty=E^{3,0}_4=E^{3,0}_3 \stackrel{(\ref{lastly})}{=}E^{3,0}_2=H^3(X^{(2)},\omega_{2*} \ZZ).$$ So it follows that the edge morphism $H^3(X^{(2)}, \omega_{2*} \ZZ) \hra H^3(X^{[2]},\ZZ)$
is injective. 
By the torsion-freeness of all other groups at infinity in the spectral sequence  we have $H^3(X^{(2)},\omega_{2,*} \ZZ)_{tor}=H^3(X^{[2]},\ZZ)_{tor}$. 

\end{proof}

Therefore by Propositions \ref{A1} and \ref{onemoreh2}, it follows that the  morphism $\omega_2$ induces an isomorphism 
\begin{equation} \label{onemore} H^2(X^{(2)},\ZZ)/NS(X^{(2)}) \ra H^2(X^{[2]},\ZZ)/NS(X^{[2]}).
\end{equation}

\begin{proof}[Proof of Theorem \ref{mtcomplex}] 
Combining (\ref{onemore})  with
Proposition \ref{H_3tor}, by the sequence due to Schroer (cf Proposition \ref{sch}) for $\omega_2:X^{[2]} \ra X^{(2)}$  we get our result.
\end{proof}

\begin{Cor} The group $H^2_{an}(X^{(2)},\GG_m)$ is torsion.  \end{Cor}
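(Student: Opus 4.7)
The plan is to derive the corollary from Theorem \ref{mtcomplex} together with a direct analysis of the analytic Leray spectral sequence for $\omega_2$ with coefficients in $\cO^*$.

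First, I would establish the analytic analogue of Proposition \ref{pushforwardbyomega2}: namely $\omega_{2,*}\cO^*_{X^{[2]}}\cong \cO^*_{X^{(2)}}$. The proof transcribes verbatim to the complex-analytic setting, using $\Gamma(\PP^1_\CC,\cO^*)=\CC^*$ in place of $\Gamma(\PP^1,\GG_m)$. Next, one computes the higher direct images: by proper base change in topology (as used in Proposition \ref{onemoreh2}) the sheaves $R^q\omega_{2,*}\cO^*$ are supported along $i:X\hookrightarrow X^{(2)}$ and identify with $i_* R^q\pi_*\cO^*_{\PP(\Omega_X)}$. The exponential sequence on a fiber $\PP^1$ yields $R^1\pi_*\cO^*\simeq \ZZ$ and $R^2\pi_*\cO^*=0$, so $R^1\omega_{2,*}\cO^*\simeq i_*\ZZ$ and $R^2\omega_{2,*}\cO^*=0$.

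Feeding this into the seven-term low-degree sequence of the Leray spectral sequence gives
\[
0\to \Pic(X^{(2)})\to \Pic(X^{[2]}) \to \ZZ \to H^2_{an}(X^{(2)},\cO^*) \to H^2_{an}(X^{[2]},\cO^*) \to H^1(X,\ZZ).
\]
Since the exceptional divisor class $[\cO_{X^{[2]}}(\PP(\Omega_X))]\in\Pic(X^{[2]})$ has nonzero degree on each $\PP^1$-fiber of $\omega_2$ (the fiber behaves as the $-2$-curve of an $A_1$ resolution), the map $\Pic(X^{[2]})\to \ZZ$ has finite-index image, forcing the kernel of $\omega_2^*:H^2_{an}(X^{(2)},\cO^*)\to H^2_{an}(X^{[2]},\cO^*)$ to be a finite cyclic group, hence torsion.

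It then remains to show that the image of $\omega_2^*$ lands inside the torsion subgroup $Br'_{an}(X^{[2]})\subseteq H^2_{an}(X^{[2]},\cO^*)$; combined with the finite kernel, this would yield the corollary. Theorem \ref{mtcomplex} already identifies $Br'_{an}(X^{(2)})\xrightarrow{\sim} Br'_{an}(X^{[2]})$, so the task is to verify there is no extra contribution outside the torsion. The natural way to do this is to compare the analytic exponential sequences on $X^{(2)}$ and $X^{[2]}$, using that $\omega_2$ is a rational resolution (so $H^p(X^{(2)},\cO)\cong H^p(X^{[2]},\cO)$) and that the cokernel $\ZZ\cdot[\PP(\Omega_X)]$ of $H^2(X^{(2)},\ZZ)\hookrightarrow H^2(X^{[2]},\ZZ)$ from Proposition \ref{onemoreh2} is a $(1,1)$-class, hence maps to $0$ in $H^2(-,\cO)$; this should confine any discrepancy to the $\ZZ$ factor already accounted for in the Leray sequence.

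The main obstacle I expect is precisely this last step: controlling the interaction between the Leray sequence (with $\cO^*$ coefficients) and the exponential filtration on $H^2_{an}(X^{[2]},\cO^*)$, so as to rule out non-torsion elements in the image of $\omega_2^*$. Everything else is formal, driven by the structure of $\omega_2$ as a rational resolution of an ordinary double locus along $X\subset X^{(2)}$.
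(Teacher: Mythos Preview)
Your Leray-spectral-sequence setup is exactly the paper's approach, and your low-degree exact sequence is the same one the paper writes down. Two remarks, one minor and one substantive.

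\medskip
\emph{Minor.} Proper base change in topology, as invoked in Proposition~\ref{onemoreh2}, is stated for (locally) constant sheaves such as $\ZZ$; it does not apply verbatim to $\cO^*$. Your identifications $R^1\omega_{2,*}\cO^*\simeq i_*\ZZ$ and $R^2\omega_{2,*}\cO^*=0$ are nonetheless correct, but the clean justification is the pushforward of the exponential sequence together with the rational-resolution vanishing $R^q\omega_{2,*}\cO=0$ for $q>0$, which yields $R^1\omega_{2,*}\cO^*\simeq R^2\omega_{2,*}\ZZ=i_*\ZZ$ and $R^2\omega_{2,*}\cO^*\simeq R^3\omega_{2,*}\ZZ=0$.

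\medskip
\emph{Substantive.} The ``main obstacle'' you single out is in fact no obstacle, and this is where your argument diverges from the paper's and becomes unnecessarily convoluted. You do not need to show that the \emph{image} of $\omega_2^*$ lands in the torsion subgroup of $H^2_{an}(X^{[2]},\cO^*)$: the \emph{entire} group $H^2_{an}(X^{[2]},\cO^*)$ is already torsion. Indeed $X^{[2]}$ is smooth projective, so by Grothendieck's theorem $H^2_{\acute{e}t}(X^{[2]},\GG_m)$ is torsion (it injects into the Brauer group of the function field), and by \cite[Prop.~1.3]{schroer} this group coincides with $H^2_{an}(X^{[2]},\cO^*)$. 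Your Leray computation already gives that the kernel of $\omega_2^*:H^2_{an}(X^{(2)},\cO^*)\to H^2_{an}(X^{[2]},\cO^*)$ is finite, hence torsion; since the target is torsion, the source is torsion. This is exactly how the paper finishes, in one sentence, without invoking Theorem~\ref{mtcomplex} or any comparison of exponential filtrations.
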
\begin{proof} For the complex topology, consider the Leray spectral sequence for the map $\omega_2: X^{[2]} \ra X^{(2)}$:\begin{equation} E^{p,q}_2=H^p(X^{(2)},R^q \omega_{2,*} \ul{\GG_m}) \implies H^{p+q}(X^{[2]}, \ul{\GG_m})\end{equation}where $\ul{\GG_m}$ denotes the sheaf defined by $\GG_m$. Consider the $7$-term long sequence\begin{equation}  0 \ra E^{1,0}_2 \ra E^1 \ra E^{0,1}_2 \ra E^{2,0}_2 \ra ker(E^2 \ra E^{0,2}_2) \ra E^{1,1}_2 \end{equation} Now $\omega_{2,*} \ul{\GG_m} = \ul{\GG_m}$. Now the part $0 \ra E^{1,0}_2 \ra E^1 \ra E^{0,1}_2$ identifies with \begin{equation}0 \ra H^1(X^{(2)},\GG_m) \ra H^1(X^{[2]},\GG_m) \ra H^0(X^{(2)}, R^1 \omega_{2,*} \GG_m)\end{equation} It is surjective because $\PP(\Omega_X) \hra X^{[2]}$ defines a divisor whose associated line bundle maps to the generator of $H^0(X^{(2)}, R^1 \omega_{2,*} \GG_m)$. Therefore $$H^2(X^{(2)},\GG_m)=E^{2,0}_2 \hra E^2=H^2(X^{[2]},\GG_m)$$

Since $X^{[2]}$ is smooth, so $H^2_{\acute{e}t}(X^{[2]},\GG_m)$ is torsion by a theorem of Grothendieck. But this equals $H^2_{an}(X^{[2]},\GG_m)$ by \cite[Prop 1.3,1.4]{schroer}. So $H^2_{an}(X^{(2)},\GG_m)$ is also torsion. 

\end{proof}

So by Theorem \ref{mtcomplex}, $H^2_{an}(X^{(2)},\GG_m)=Br'_{an}(X^{(2)})$.

\section{Computations}
 Recall that if $Y$ and $Z$ are smooth proper schemes over $k$, then (cf \cite[\S 6]{fogarty})
\begin{equation} \label{lbonprod}
\Pic(Y \times Z) \simeq \Pic(Y) \times \Pic(Z) \times \Hom(Alb(Y),\Pic^0(Z))
\end{equation}
where $Alb(Y)$ is the Albanese variety of $Y$. Classically $\Hom(Alb(Y),\Pic^0(Z))$ is called the group of {\it divisorial classes} between $Y$ and $Z$. Following \cite{bostcharles}, we will denote it as $DC(Y,Z)$ . Fixing a closed point $y \in Y$ and $z \in Z$ we may realise $DC(Y,Z)$ as a subgroup of $\Pic(Y \times Z)$ as follows:
\begin{equation} DC(Y,Z)= \{ L \in \Pic(Y \times Z) | L|_{y \times Z} \simeq \cO_{y \times Z} \quad L|_{Y \times z} \simeq \cO_{Y \times z} \}
\end{equation}

We now need to quote several results from  \cite{sga7} or \cite{bostcharles}. Although in \cite{bostcharles}, these are stated for schemes over $\ol{\QQ}$, but as remarked in \cite[\S 3.2]{bostcharles}, suitably formulated these hold over an arbitrary base. Since we work over algebraically closed fields, this should not cause a problem.  By \cite[(3.3) page 14]{bostcharles}, we have 
\begin{equation}
NS(Y \times Z) \simeq NS(Y) \times NS(Z) \times DC(Y,Z).
\end{equation}
 Let $S_n$ denote the symmetric group on $n$ letters. Taking $Y=Z=X$, we get a $S_2$ action on $DC(X,X)$. Classically one calls the following
\begin{equation} \label{symmetriccllasses}
C_s(X):=Hom(A(X),\Pic^0(X))^{S_2} = DC(X,X)^{S_2}
\end{equation} the group of {\it symmetric divisorial classes} (cf \cite{fogarty}).  

By \cite[Prop 3.5]{bostcharles}, fixing a closed point  $x \in X$ we get isomorphisms
\begin{equation}
(alb_{X,x},alb_{X,x})^*: DC(Alb(X),Alb(X)) \stackrel{\simeq}{\ra} DC(X,X)
\end{equation}
By \cite[(2.9.6.2)]{sga7} or \cite[Prop 3.6]{bostcharles} we have isomorphisms
\begin{equation}
DC(Alb(X),Alb(X)) \simeq Hom_k(Alb(X),Alb(X)^{\vee}).
\end{equation}
If we define $Hom_k^{sym}(A,A^{\vee})=\{\psi \in Hom_k(A,A^{\vee}) | \psi^{\vee}=\psi \}$, then by \cite[Expos\'e VIII.4 (4.14.1)]{sga7} or \cite[Prop 3.7]{bostcharles} we have isomorphism
\begin{equation}
NS(Alb(X)) \simeq Hom_k^{sym}(Alb(X),Alb(X)^{\vee}).
\end{equation}
Therefore by the preceeding four isomorphisms, we get
\begin{equation} \label{csxasnsalb}
NS(Alb(X))= C_s(X).
\end{equation}

\subsection{Computations for algebraically closed field $k$}
Let us cite the following result by Grothendieck.
\begin{prop} \label{grothendieck} \cite[Corollaire 3.4, page 82]{grothendieckbrauerii} Let $Z$ be a smooth geometrically integral variety over $k$. Let $\rho$ be the rank of $NS(Z)$. Let $b_2$ be the second $\ell$-adic Betti number of $Z$. Then the $\ell$-primary component $Br(Z)_{\ell^*} \subset Br(Z)$ is an extension of $H^3_{\acute{e}t}(Z,\ZZ(1))_{tor}$ by $(\QQ_{\ell}/\ZZ_{\ell})^{b_2- \rho}$. 
\end{prop}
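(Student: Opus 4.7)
The plan is to extract this from the Kummer sequence by passing to the limit, combined with a universal-coefficient computation for $\ell$-adic cohomology, and the injectivity of the cycle class map.

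First, I would start from the Kummer short exact sequence $1 \to \mu_{\ell^n} \to \GG_m \xrightarrow{\ell^n} \GG_m \to 1$ on $Z_{\acute{e}t}$. Breaking the long exact sequence into pieces using that $H^1(Z,\GG_m)=\Pic(Z)$ gives for every $n$:
\begin{equation*}
0 \ra \Pic(Z)/\ell^n \ra H^2_{\acute{e}t}(Z,\mu_{\ell^n}) \ra Br(Z)[\ell^n] \ra 0.
\end{equation*}
Since étale cohomology commutes with filtered direct limits on a noetherian scheme and $\QQ_\ell/\ZZ_\ell(1)=\varinjlim_n \mu_{\ell^n}$, passing to the limit yields
\begin{equation*}
0 \ra \Pic(Z)\otimes\QQ_\ell/\ZZ_\ell \ra H^2_{\acute{e}t}(Z,\QQ_\ell/\ZZ_\ell(1)) \ra Br(Z)_{\ell^*} \ra 0.
\end{equation*}
Because $\Pic^0(Z)$ is an $\ell$-divisible group, the first term equals $NS(Z)\otimes \QQ_\ell/\ZZ_\ell = (\QQ_\ell/\ZZ_\ell)^{\rho}$.

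Second, I would compute the middle group via the ``coefficient'' sequence $0 \to \ZZ_\ell(1) \to \QQ_\ell(1) \to \QQ_\ell/\ZZ_\ell(1) \to 0$, interpreted through Jannsen's continuous cohomology. The resulting long exact sequence produces
\begin{equation*}
0 \ra H^2_{\acute{e}t}(Z,\ZZ_\ell(1))\otimes_{\ZZ_\ell}\QQ_\ell/\ZZ_\ell \ra H^2_{\acute{e}t}(Z,\QQ_\ell/\ZZ_\ell(1)) \ra H^3_{\acute{e}t}(Z,\ZZ_\ell(1))_{tor} \ra 0,
\end{equation*}
using that for a finitely generated $\ZZ_\ell$-module $M$ one has $M\otimes \QQ_\ell/M = M\otimes_{\ZZ_\ell}\QQ_\ell/\ZZ_\ell$ and that tensoring with $\QQ_\ell/\ZZ_\ell$ picks up the torsion of the next cohomology group. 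Since $H^2_{\acute{e}t}(Z,\ZZ_\ell(1))$ has $\ZZ_\ell$-rank equal to $b_2$, its tensor with $\QQ_\ell/\ZZ_\ell$ is $(\QQ_\ell/\ZZ_\ell)^{b_2}$.

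Third, I would invoke the cycle class map $c_1 \colon NS(Z)\otimes\ZZ_\ell \hookrightarrow H^2_{\acute{e}t}(Z,\ZZ_\ell(1))$, whose injectivity is classical. This makes the inclusion $NS(Z)\otimes\QQ_\ell/\ZZ_\ell \hookrightarrow H^2_{\acute{e}t}(Z,\ZZ_\ell(1))\otimes\QQ_\ell/\ZZ_\ell$ compatible with the two short exact sequences displayed above. A snake-lemma diagram chase on
\begin{equation*}
\xymatrix{
0 \ar[r] & (\QQ_\ell/\ZZ_\ell)^{\rho} \ar[r]\ar@{^{(}->}[d] & H^2(Z,\QQ_\ell/\ZZ_\ell(1)) \ar[r]\ar@{=}[d] & Br(Z)_{\ell^*} \ar[r]\ar@{.>}[d] & 0 \\
0 \ar[r] & (\QQ_\ell/\ZZ_\ell)^{b_2} \ar[r] & H^2(Z,\QQ_\ell/\ZZ_\ell(1)) \ar[r] & H^3(Z,\ZZ_\ell(1))_{tor} \ar[r] & 0
}
\end{equation*}
identifies the cokernel of the first vertical map with $(\QQ_\ell/\ZZ_\ell)^{b_2-\rho}$, and a chase then gives the desired
\begin{equation*}
0 \ra (\QQ_\ell/\ZZ_\ell)^{b_2-\rho} \ra Br(Z)_{\ell^*} \ra H^3_{\acute{e}t}(Z,\ZZ_\ell(1))_{tor} \ra 0.
\end{equation*}

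The main obstacle is the technical one of making the $\QQ_\ell$-coefficient sequence rigorous, since $\QQ_\ell(1)$ is not an honest étale sheaf. One must either work with Jannsen's continuous étale cohomology (so that the short exact sequence of coefficient systems yields a genuine long exact sequence), or bypass this by taking $\varprojlim$ of the Kummer pieces followed by $\otimes \QQ_\ell/\ZZ_\ell$; both routes require some care for the passage to and from the inverse limit but are standard under smoothness and (for the needed finite generation) proper/quasi-projective hypotheses.
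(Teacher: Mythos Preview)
The paper does not give a proof of this proposition: it is quoted verbatim as \cite[Corollaire 3.4, page 82]{grothendieckbrauerii} and used as a black box in the computations of \S5. So there is no ``paper's own proof'' to compare against.

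That said, your outline is essentially Grothendieck's original argument and is correct. A couple of minor remarks on points you might want to make explicit if you write it out in full. First, to conclude that the cokernel of $(\QQ_\ell/\ZZ_\ell)^{\rho}\hookrightarrow(\QQ_\ell/\ZZ_\ell)^{b_2}$ is $(\QQ_\ell/\ZZ_\ell)^{b_2-\rho}$ you are implicitly using that $(\QQ_\ell/\ZZ_\ell)^{\rho}$ is an injective $\ZZ_\ell$-module, so the inclusion splits and the corank is additive; this is true but worth saying. Second, the injectivity of the left vertical map in your diagram (i.e.\ of $NS(Z)\otimes\QQ_\ell/\ZZ_\ell \to H^2(Z,\ZZ_\ell(1))\otimes\QQ_\ell/\ZZ_\ell$) is not quite automatic from injectivity of $NS(Z)\otimes\ZZ_\ell \hookrightarrow H^2(Z,\ZZ_\ell(1))$, since tensoring with $\QQ_\ell/\ZZ_\ell$ is only right exact; but it follows because the finite-level maps $NS(Z)/\ell^n \hookrightarrow H^2(Z,\mu_{\ell^n})$ from Kummer factor through $H^2(Z,\ZZ_\ell(1))/\ell^n$, and direct limits preserve injections. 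Your own caveat about the $\QQ_\ell$-coefficient sequence and finite generation is apt; in the paper's applications $Z$ is always projective, so these issues do not arise.
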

Since the Weil conjectures have been proven by Deligne, so for $\ell \neq p$, we may refer to the $\ell$-adic Betti numbers as simply the Betti numbers of $Z$.

\begin{thm} \label{ansd=2}  Let $X$ be a smooth projective surface over an algebraically closed field $k$. Let $\ell \neq \{p,2\}$. Let  \begin{equation} \label{a}
a:= b_2+b_1^2- rank NS(X)- rank NS(Alb(X)) 
\end{equation} 
where $b_i$ are the $i$-th Betti numbers.
Let $l \neq \{2,p\}$ be a prime.
We have 
\begin{eqnarray*} Br(X^{[2]})_{\ell^*}=  (\QQ_{\ell}/\ZZ_{\ell})^{\oplus a} \oplus  H^3(X, \ZZ_{\ell})_{\ell^*} \oplus [H^1(X,\ZZ_{\ell}) \otimes H^2(X, \ZZ_{\ell})]_{\ell^*} \\ \oplus  Tor_1^{\ZZ_{\ell}}(H^1(X,\ZZ_{\ell}),H^3(X,\ZZ_{\ell}))_{\ell^*} \oplus Tor_1^{\ZZ_{\ell}}(H^2(X,\ZZ_{\ell}),H^2(X,\ZZ_{\ell}))_{\ell^*} 
\end{eqnarray*} 

Let $k=\CC$ and suppose $H_1(X)_{tor}$ has no two torsion element, then 
\begin{equation} \label{no2torc} Br(X^{[2]})=(\QQ/\ZZ)^a \oplus H_1(X)_{tor}^{\oplus (b_1+1)} \oplus Tor_1^{\ZZ}(H^2(X,\ZZ),H^2(X,\ZZ)) \end{equation}

\end{thm}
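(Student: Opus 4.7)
The plan is to reduce the computation to $Br'(X^{\times 2})^{S_2}_{\ell^*}$ in the \'etale case, and to $Br'(X^{(2)})$ in the complex case, and then apply Grothendieck's or Schroer's structural result together with the K\"unneth formula on $X^{\times 2}$.

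For the \'etale part, composing the three isomorphisms already established---Theorem \ref{anychar} for $Br(X^{[2]})_{\ell^*} \simeq Br'(X^{(2)})_{\ell^*}$, and Theorems \ref{stacktosym2} and \ref{allexcept2tor} (valid for $\ell$ odd) for $Br'(X^{(2)})_{\ell^*} \simeq Br'([X^{\times 2}/S_2])_{\ell^*} \simeq Br(X^{\times 2})^{S_2}_{\ell^*}$---reduces us to computing $Br(X^{\times 2})^{S_2}_{\ell^*}$. Proposition \ref{grothendieck} applied to $Y=X^{\times 2}$ produces a short exact sequence whose left term is divisible, hence injective in abelian groups, so the sequence splits:
\begin{equation*}
Br(X^{\times 2})_{\ell^*} \simeq (\QQ_\ell/\ZZ_\ell)^{b_2(X^{\times 2})-\rho(X^{\times 2})} \oplus H^3_{\acute{e}t}(X^{\times 2},\ZZ_\ell(1))_{tor}.
\end{equation*}
Since $\ell$ is odd and $|S_2|$ is invertible on $\ell$-primary modules, the functor $(-)^{S_2}$ is exact and the splitting descends to $S_2$-invariants.

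It remains to identify each invariant summand by the K\"unneth formula
\begin{equation*}
H^n(X^{\times 2},\ZZ_\ell) = \bigoplus_{i+j=n} H^i(X,\ZZ_\ell) \otimes H^j(X,\ZZ_\ell) \;\oplus\; \bigoplus_{i+j=n+1} Tor(H^i(X,\ZZ_\ell), H^j(X,\ZZ_\ell)),
\end{equation*}
where $S_2$ swaps tensor (and Tor) factors with the Koszul sign $(-1)^{|i||j|}$. In degree $3$, taking invariants and passing to torsion produces exactly one copy each of $H^3(X)_{tor}$, $[H^1(X)\otimes H^2(X)]_{tor}$, $Tor(H^1,H^3)_{tor}$, and $Tor(H^2,H^2)_{tor}$, matching the finite-torsion summands in the statement. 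For the divisible rank, we combine the K\"unneth computation in degree $2$ with Proposition \ref{A1} and equation (\ref{csxasnsalb})---which give $NS(X^{\times 2})^{S_2} \simeq NS(X) \oplus NS(Alb(X))$ via $DC(X,X)^{S_2} = C_s(X) = NS(Alb(X))$---to read off the claimed value of $a$.

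For the complex case, Theorem \ref{mtcomplex} replaces $Br(X^{[2]})$ by $Br'(X^{(2)})$, and Schroer's sequence (Proposition \ref{sch}) for $X^{(2)}$ splits by divisibility. The integral cohomology of $X^{(2)}$ is recovered from $H^*(X^{\times 2},\ZZ)^{S_2}$ via K\"unneth. The hypothesis that $H_1(X,\ZZ)_{tor}$ has no $2$-torsion---which by UCT and Poincar\'e duality forces every integral cohomology group of $X$ to be $2$-torsion free---kills the $H^i(S_2,\cdot)$ correction terms arising when $S_2$-invariants are taken integrally. Identifying $H^3(X,\ZZ)_{tor} \simeq H^2(X,\ZZ)_{tor} \simeq H_1(X,\ZZ)_{tor}$, using $Tor(H^1,H^3)=0$ since $H^1(X,\ZZ)$ is torsion-free, and collecting the $H_1$-torsion contributions from $H^3(X)_{tor}$ and $H^1(X)\otimes H^2(X)_{tor}\simeq H_1(X)_{tor}^{\oplus b_1}$, one obtains $H_1(X)_{tor}^{\oplus(b_1+1)} \oplus Tor(H^2(X),H^2(X))$, which is (\ref{no2torc}).

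The principal difficulty is the careful sign bookkeeping in the $S_2$-action on the K\"unneth decomposition---especially on the Tor summands---to ensure the invariants isolate exactly one copy of each listed group rather than none or two. A secondary technical issue, specific to the complex case, is guaranteeing that $S_2$-invariant-taking commutes with K\"unneth at the integral level despite the fixed locus (the diagonal $X \hookrightarrow X^{\times 2}$) of the action; this is precisely what the no-$2$-torsion assumption secures.
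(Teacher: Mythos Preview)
Your $\ell$-adic argument is essentially the paper's: both reduce to $Br(X^{\times 2})^{S_2}_{\ell^*}$ via Theorems \ref{anychar}, \ref{stacktosym2}, \ref{allexcept2tor}, then apply Grothendieck's structure result together with the K\"unneth decomposition and the identifications $NS(X^{\times 2})^{S_2}\simeq NS(X)\times C_s(X)$ and $C_s(X)\simeq NS(Alb(X))$.

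Your complex argument, however, has a genuine gap. You apply Schroer's sequence directly to $X^{(2)}$, which requires $H^3(X^{(2)},\ZZ)_{tor}$, and then assert that the integral cohomology of $X^{(2)}$ is recovered from $H^*(X^{\times 2},\ZZ)^{S_2}$, the no-$2$-torsion hypothesis ``killing the $H^i(S_2,\cdot)$ correction terms.'' But those group-cohomology corrections live in the Hochschild--Serre spectral sequence for the \emph{stack} quotient $[X^{\times 2}/S_2]$, not in any spectral sequence computing the cohomology of the \emph{coarse} quotient $X^{(2)}$. The comparison $H^*(X^{(2)},\ZZ)\to H^*(X^{\times 2},\ZZ)^{S_2}$ is governed by transfer, and even when $H^*(X^{\times 2},\ZZ)$ is $2$-torsion free the Tate cohomology $\widehat{H}^0(S_2, H^*(X^{\times 2},\ZZ))$ need not vanish (take the trivial module $\ZZ$). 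So nothing you have written rules out $2$-torsion in $H^3(X^{(2)},\ZZ)$ that is invisible in $H^3(X^{\times 2},\ZZ)^{S_2}$.

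The paper sidesteps this entirely: it never computes $H^*(X^{(2)},\ZZ)$. Instead it stays at the level of Brauer groups, first computing $Br(X^{\times 2})^{S_2}$ (where K\"unneth is legitimate), and then, under the no-$2$-torsion hypothesis, uses the $\ell=2$ exact sequences of Theorems \ref{stacktosym2} and \ref{allexcept2tor} to show that the maps to $\mu_2$ and $H^1(X,\mu_2)$ appearing there are forced to vanish---because the source has no non-divisible $2$-torsion---so that $Br'(X^{(2)})\simeq Br'([X^{\times 2}/S_2])\simeq Br(X^{\times 2})^{S_2}$ holds for all primes simultaneously. To repair your argument you should route the $2$-primary part through the stack in the same way rather than through $H^3(X^{(2)},\ZZ)$.
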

\begin{rem} For Catanese and Godeaux surfaces we have $\pi_1(X)=\ZZ/5$. Therefore $\pi_1^{\acute{e}t}(X)=\ZZ/5$. Thus by the relation $H^1_{\acute{e}t}(X,\ZZ_5)=\varprojlim Hom(\pi_1^{\acute{e}t}(X),\ZZ/5^m)$ (cf \cite{milneLEC}) we see that $H^1_{\acute{e}t}(X,\ZZ_5)=\ZZ/5$ is torsion, unlike $H^1(X,\ZZ)$ which is always torsion-free. On the other hand, by Poincar\'e duality (cf \cite{milneLEC}) $H^3_{\acute{e}t}(X,\ZZ_5)=\ZZ/5$ also. So we have $Tor_1^{\ZZ_5}(H^1(X,\ZZ_5),H^3(X,\ZZ_5))=\ZZ/5$. Thus this extra term in the $\ell$-adic case is not superflous.
\end{rem}
\begin{proof} Since $X$ is projective, so $X^{[2]}$ is projective. So by Gabber's theorem $Br(X^{[2]})=Br'(X^{[2]})$. By Theorem \ref{anychar} (or by Theorem \ref{mtcomplex} for $\CC$), $Br'(X^{(2)}) \ra Br'(X^{[2]})$ is an isomorphism. By Theorem \ref{stacktosym2}, $ H^2_{\acute{E}t}(X^{(2)},\GG_m)_{l^*} \ra H^2_{\acute{E}t}([X^{\times 2}/S_2],\GG_m)_{l^*}$ is an isomorphism. By Theorem \ref{allexcept2tor}, we have $Br'([X^{\times 2}/S_2])_{l^*} \ra Br(X^{\times 2})_{l^*}^{S_2}$ is an isomorphism.

Recall that by the Kunneth formula in \'etale cohomology we have
\begin{equation*}H^n(X \times Y,\ZZ_{\ell}) = \sum_{i+j =n} H^i(X,\ZZ_{\ell}) \otimes H^j(Y,\ZZ_{\ell}) \oplus \sum_{p + q=n+1} Tor_1^{\ZZ_{\ell}}(H^p(X,\ZZ_{\ell}),H^q(Y,\ZZ_{\ell}))
\end{equation*}

Let us record some of its consequences.
For $b \in [1,2]$, let $X_b$ denote the $b$-th factor in $X^{\times 2}$. By straightforward induction, we deduce
\begin{eqnarray*} H^1(X^{\times 2},  \QQ_{\ell}) & =&  \oplus_{1 \leq b \leq 2} H^1(X_b,\QQ_{\ell}), \\H^2(X^{\times 2}, \QQ_{\ell}) & = & \oplus_{1 \leq b \leq 2} H^2(X_b,\QQ_{\ell})  \oplus_{1 \leq b < c \leq 2} H^1(X_b,\QQ_{\ell}) \otimes H^1(X_c,\QQ_{\ell})
\end{eqnarray*}

The group $S_2$ permutes the indices $\{b|1 \leq b \leq 2 \}$ and $\{(b,c)|1 \leq b < c \leq 2 \}$. So taking $S_2$ invariants we get that the $\ZZ_{\ell}$ rank of $H^2(X^{\times 2},\ZZ_{\ell})^{S_2}$ is $b_2+b_1^2$.

Now the $\ZZ$ rank of $NS(X^{\times 2})^{S_2}$ is equal to that of $NS(X^{(2)})$.
By \cite[Fogarty (6)' page 678]{fogarty} we have 
\begin{equation} \label{rknsxd}
rank NS(X^{(2)})= rank NS(X) + rank C_s(X)
\end{equation}
where $C_s(X)$ is the group of symmetric divisor classes (cf \ref{symmetriccllasses}). Now rank of $NS(C_s(X))$ equals that of $NS(Alb(X))$ by (\ref{csxasnsalb}). So we get
\begin{equation}
rank NS(X^{\times 2})^{S_2} = rank NS(X) + rank NS(Alb(X)).
\end{equation}

Thus the integer $a$ (cf (\ref{a})) is determined. Now we derive $H^3(X^{[2]},\ZZ_{\ell})_{tor}$
by the Kunneth formula. Taking the torsion subgroup, we get $H^3(X^{\times 2}, \ZZ_{\ell})_{tor}=$
\begin{eqnarray*}  \sum_{i+j=3,0 \leq i,j \leq 3} [H^i(X,\ZZ_{\ell}) \otimes H^j(X, \ZZ_{\ell})]_{tor}   \oplus \sum_{p+q =4, 0 \leq p,q \leq 4} Tor_1(H^p(X,\ZZ_{\ell}),H^q(X,\ZZ_{\ell})) 
\end{eqnarray*}
Now the group $S_2$ permutes the indices $\{(i,j) | 0 \leq i, j \leq 3 \}$ and $\{(p,q)|0 \leq p,q \leq 4 \}$ between themselves.
Using the facts that $H^0(X,\ZZ_{\ell})=\ZZ_{\ell}$ and taking $S_2$ invariants, we may simplify this formula as follows. Combining the previous statements with Grothendieck's corollary (cf Proposition \ref{grothendieck}) we get our result which computes $Br(X^{[2]})_{\ell^*}$ for the case $\ell \neq 2,p$.


To deduce the second statement, we only need to determine  the Brauer group modulo its divisible subgroup. In other words, we want to calculate $H^3(X^{(2)},\ZZ)_{tor}$.  By the Kunneth formula we have$H^3(X^{\times 2}, \ZZ)=$
\begin{eqnarray*}  \sum_{i+j=3,0 \leq i,j \leq 3} [H^i(X,\ZZ) \otimes H^j(X, \ZZ)]   \oplus \sum_{p+q =4, 0 \leq p,q \leq 4} Tor_1(H^p(X,\ZZ),H^q(X,\ZZ)) 
\end{eqnarray*}

Now $H^0(X,\ZZ)=\ZZ$ and $H^1(X,\ZZ)=\ZZ^{b_1}$ are torsion-free and $H_1(X,\ZZ)_{tor} \simeq H^2(X,\ZZ)_{tor}$ by universal coefficient theorem and $H_1(X,\ZZ)_{tor} \simeq H^3(X,\ZZ)_{tor}$ by Poincar\'e duality. Thus we have $H^3(X^{ \times 2},\ZZ)^{S_2}_{tor}=$ $$[H^0(X,\ZZ) \otimes H^3(X,\ZZ) \oplus H^1(X,\ZZ) \otimes H^2(X,\ZZ)]_{tor} \oplus Tor_1^{\ZZ}(H^2(X,\ZZ),H^2(X,\ZZ)).$$
Thus we have
\begin{equation}
H^3(X^{\times 2},\ZZ)^{S_2}_{tor}=H_1(X,\ZZ)_{tor}^{\oplus (b_1 +1)} \oplus Tor_1^{\ZZ}(H^2(X,\ZZ),H^2(X,\ZZ)).
\end{equation} 
Thus if $H_1(X)_{tor}$ has no two torsion element, then $Br(X^{\times 2})^{S_2}$ does not have a non-divisible two torsion elements. Thus by Theorem \ref{allexcept2tor} it follows that $Br'([X^{\times 2}/S_2])$ is isomorphic to $Br(X^{\times 2})^{S_2}$. In particular, it does not have a non-divisible two torsion  element.

Combining the cases $l \neq 2$ and $l=2$ of Theorem \ref{stacktosym2} we obtain   the following single exact sequence
\begin{equation*}
0 \ra H^2_{\acute{E}t}(X^{(2)},\GG_m) \ra \ker(H^2_{\acute{E}t}([X^{\times 2}/S_2]),\GG_m) \ra \mu_2) \ra H^1(X, \mu_2)
\end{equation*}
Now divisible elements of $H^2_{\acute{E}t}([X^{\times 2}/S_2],\GG_m)$ must map to zero in $\mu_2$ and so must the remaining since they are not of two torsion. Therefore $H^2_{\acute{E}t}([X^{\times 2}/S_2],\GG_m)$ identifies with the kernel term. Similarly the map  $\ker \ra H^1(X,\mu_2)$ must be zero. Therefore we get the isomorphism 
\begin{equation}
 H^2_{\acute{E}t}(X^{(2)},\GG_m) \ra H^2_{\acute{E}t}([X^{\times 2}/S_2]),\GG_m) 
\end{equation}
Therefore upto divisible group part $H^2_{\acute{e}t}(X^{(2)},\GG_m)$ identifies with $H_1(X)_{tor}^{\oplus (b_1+1)} \oplus Tor_1^{\ZZ}(H^2(X,\ZZ),H^2(X,\ZZ))$.
\end{proof}


\begin{rem} \label{cctotaro} By \cite[Theorem 2.2]{totaro}, if the integral cohomology of $X$ has no $2$-torsion, then the integral cohomology of $X^{[2]}$ has no $2$-torsion either. In particular $H^3(X^{[2]},\ZZ)$ would have no two torsion. On the other hand,  by (\ref{no2torc}) of Theorem \ref{ansd=2} the integral cohomology of $X^{(2)}$ also does not have any element of two torsion. So our result is in agreement with a consequence of Totaro's result.
\end{rem}
\begin{rem} \label{computation} We could not follow the sketch of the description of the special polynomial algebra on \cite[page 257]{milgram} and therefore are unable to solve the second homology any further.
\end{rem} 

\begin{rem} \label{ccsteenrod} Over the complex numbers, we see that the Brauer group equals $H^3(X^{(2)},\ZZ)_{tor}$ modulo its divisible subgroup. Now $H^3(X^{(2)},\ZZ)_{tor}=H_2(X^{(2)},\ZZ)_{tor}$ by the universal coefficient theorem.
Let us choose a base point $x_0 \in X$. By a result of N.Steenrod \cite[(22.3),(22.5),(22.6)]{steenrod} (cf also \cite[page 2]{milgram} and \cite[\S 9, last para]{dold} for the following form) we have
\begin{equation}
H_2(X^{(2)},\ZZ)= H_2(X,x_0,\ZZ) \oplus H_2(X^{(2)},X,\ZZ).
\end{equation}
Since $H_2(X,x_0,\ZZ)=H_2(X,\ZZ)$, so we see that the $H^3(X^{(2)},\ZZ)_{tor}$ must contain a copy of $H_2(X,\ZZ)_{tor}=H^3(X,\ZZ)_{tor}$ as a direct summand by topological methods. Now $H^3(X,\ZZ)_{tor}$ is the Brauer group of the surface $X$ modulo its divisible subgroup. On the other hand, in Theorem \ref{ansd=2} for $\ell \neq 2$ we showed that the Brauer group contains $H^3(X,\ZZ_{\ell})_{\ell^*}$ using algebraic methods of \S \ref{cohbrgpsym2}. Thus our algebraic methods of \S \ref{cohbrgpsym2} agree with Steenrod's result for $\ell \neq 2$.
\end{rem}

\begin{Cor} Let $k=\CC$. If $H_1(X,\ZZ)$ is torsion-free, then $Br(X^{[2]})=(\QQ/\ZZ)^{a}$.
\end{Cor}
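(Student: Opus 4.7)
The plan is to deduce this directly from the explicit formula \eqref{no2torc} in Theorem \ref{ansd=2} by verifying that all summands except the divisible one vanish under the hypothesis $H_1(X,\ZZ)_{tor}=0$.

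First, I would note that the hypothesis of the second part of Theorem \ref{ansd=2} is automatically satisfied: since $H_1(X,\ZZ)$ is torsion-free, in particular $H_1(X,\ZZ)_{tor}=0$ contains no element of order two. Hence the formula
\begin{equation*}
Br(X^{[2]})=(\QQ/\ZZ)^{\oplus a} \oplus H_1(X,\ZZ)_{tor}^{\oplus (b_1+1)} \oplus Tor_1^{\ZZ}(H^2(X,\ZZ),H^2(X,\ZZ))
\end{equation*}
applies.

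Next, I would kill the two non-divisible summands one at a time. The middle summand $H_1(X,\ZZ)_{tor}^{\oplus(b_1+1)}$ is zero by hypothesis. For the $Tor$ term, I would invoke the universal coefficient theorem: since the integral cohomology of $X$ in degree $n$ fits into the split short exact sequence $0\to \Ext^1(H_{n-1}(X,\ZZ),\ZZ)\to H^n(X,\ZZ)\to \Hom(H_n(X,\ZZ),\ZZ)\to 0$, the torsion subgroup of $H^2(X,\ZZ)$ is isomorphic to $H_1(X,\ZZ)_{tor}$, which vanishes. Therefore $H^2(X,\ZZ)$ is a finitely generated torsion-free abelian group, in particular free, and so $Tor_1^{\ZZ}(H^2(X,\ZZ),H^2(X,\ZZ))=0$.

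Combining these two vanishings with the formula yields $Br(X^{[2]})=(\QQ/\ZZ)^{\oplus a}$, which is the desired conclusion. There is really no obstacle here beyond bookkeeping: the entire content was already packaged in Theorem \ref{ansd=2}, and this corollary is the clean specialization to the case where the only contribution comes from the divisible part.
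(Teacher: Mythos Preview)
Your proof is correct and is essentially the paper's own argument: the paper also derives the corollary from \eqref{no2torc} via the universal coefficient theorem (and Poincar\'e duality, though as you observe only the former is really needed once the formula is in hand). There is nothing to add.
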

\begin{proof} Using universal coefficient theorem and Poincar\'e duality, this follows from (\ref{no2torc}).
\end{proof}
\begin{rem} After Theorem \ref{anychar} (or Theorem \ref{mtcomplex} for $k=\CC$), we have $Br'(X^{(2)}) \ra Br'(X^{[2]})$ is an isomorphism. By the universal coefficient theorem, $H^3(X^{(2)},\ZZ)_{tor}=H_2(X^{(2)},\ZZ)_{tor}$. Now $H_2(X^{(2)},\ZZ)_{tor}=0$ by \cite[Theorem 1.2]{totaro}. So our algebraic methods of \S \ref{cohbrgpsym2} agree with B.Totaro's topological result for surfaces.
\end{rem}

\begin{Cor} Let $k=\CC$. Writing $Br(X^{[2]})=(\QQ/\ZZ)^a \oplus T$, we have
\begin{center} 
\label{table}
\begin{tabular} { |c|c|c| }
\hline 
Surface & $a$ & Torsion part \\
\hline
$K3$  & $22 -NS(X)$ & $0$ \\
\hline
Abelian surface & $22-2NS(X)$ & 0 \\
\hline
$\PP^2_\CC$ & $0$ & $0$ \\
\hline
$\FF_n$ & $0$ & $0$ \\
\hline
del Pezzo & $0$ & $0$ \\
\hline
Barlow  \cite{barlow} & $0$ & $0$ \\
\hline
\cite[Lee, Park]{leepark} & $0$ & $0$ \\
\hline
\cite[Park, Park, Shin]{pps} & $0$ & $0$ \\
\hline
Godeaux & $0$ & $\ZZ/5\ZZ^{\oplus 2}$ \\
\hline
\cite[Catanese]{catanese1981} & 0 & $\ZZ/5\ZZ^{\oplus 2}$ \\
\hline
\end{tabular}
\end{center}
\end{Cor}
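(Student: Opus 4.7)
\smallskip
\noindent\textbf{Proof proposal.} The strategy is a direct case analysis applying Theorem \ref{ansd=2} to each surface in the table, so the main work is identifying the invariants $b_1,b_2,\rank NS(X),\rank NS(\Alb(X)),H_1(X)_{tor},H^2(X,\ZZ)_{tor}$ in each case. First I would check that the hypothesis of (\ref{no2torc}) is satisfied, namely that $H_1(X)_{tor}$ has no $2$-torsion. For the simply connected surfaces ($\PP^2_\CC$, Hirzebruch $\FF_n$, del Pezzo, $K3$, Barlow \cite{barlow}, Lee--Park \cite{leepark}, Park--Park--Shin \cite{pps}) this is automatic since $H_1(X,\ZZ)=0$. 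For abelian surfaces $H_1(X,\ZZ)=\ZZ^4$ is torsion-free. For Godeaux and Catanese surfaces, $\pi_1(X)=\ZZ/5\ZZ$, and the Hurewicz map gives $H_1(X,\ZZ)=\ZZ/5\ZZ$, which has no $2$-torsion.

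Next, with the formula $a=b_2+b_1^2-\rank NS(X)-\rank NS(\Alb(X))$ from (\ref{a}), I would compute case by case. For $K3$ surfaces: $b_1=0$, $b_2=22$, $\Alb(X)=0$, giving $a=22-\rank NS(X)$. For abelian surfaces: $b_1=4$, $b_2=6$, $\Alb(X)=X$ so $\rank NS(\Alb(X))=\rank NS(X)$, giving $a=6+16-2\rank NS(X)=22-2\rank NS(X)$. For $\PP^2_\CC$, $\FF_n$ and del Pezzo surfaces: $b_1=0$, $\Alb(X)=0$, and since $p_g=0$ we have $b_2=\rank NS(X)$ (every class of type $(1,1)$ is algebraic, and there is nothing outside type $(1,1)$), so $a=0$. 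For the surfaces of general type with $p_g=q=0$ (Barlow, Lee--Park, Park--Park--Shin, Godeaux, Catanese): $b_1=2q=0$, and again $p_g=0$ forces $b_2=h^{1,1}=\rank NS(X)$, so $a=0$.

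For the torsion part $T=H_1(X)_{tor}^{\oplus(b_1+1)}\oplus\Tor_1^{\ZZ}(H^2(X,\ZZ),H^2(X,\ZZ))$ from (\ref{no2torc}), the simply connected surfaces all have torsion-free $H_*$, so both summands vanish and $T=0$. For abelian surfaces the cohomology is torsion-free as well, so $T=0$. For $K3$ surfaces $H^2(X,\ZZ)$ is torsion-free and $H_1(X,\ZZ)=0$, so $T=0$. The interesting case is Godeaux/Catanese: $H_1(X)_{tor}=\ZZ/5\ZZ$ and $b_1+1=1$ give one copy of $\ZZ/5\ZZ$. By the universal coefficient theorem, $H^2(X,\ZZ)_{tor}\simeq H_1(X,\ZZ)_{tor}=\ZZ/5\ZZ$, and $\Tor_1^{\ZZ}(\ZZ/5\ZZ,\ZZ/5\ZZ)=\ZZ/5\ZZ$ contributes the second summand, giving $T=(\ZZ/5\ZZ)^{\oplus 2}$.

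The main obstacle is really bookkeeping rather than a deep step: one needs accurate references for $\pi_1$, $p_g$ and $q$ of the exotic surfaces (Barlow, Lee--Park, Park--Park--Shin) to justify that they are simply connected with $p_g=q=0$, and to cite the standard fact that for a smooth projective surface with $p_g=0$ one has $b_2=h^{1,1}=\rank NS(X)$ (via the Lefschetz $(1,1)$-theorem together with the vanishing of the transcendental part forced by $H^{2,0}=0$). Once these inputs are in place, the table entries follow by direct substitution into Theorem \ref{ansd=2}.
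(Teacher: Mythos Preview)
Your proposal is correct and follows essentially the same approach as the paper: a direct case-by-case application of Theorem \ref{ansd=2}, computing the invariants $b_1,b_2,\rank NS(X),\rank NS(\Alb(X))$ and $H_1(X)_{tor}$ for each surface. The only cosmetic difference is that you organize the argument invariant-by-invariant whereas the paper proceeds surface-by-surface, but the substance (including the Godeaux/Catanese computation via $H^2(X,\ZZ)_{tor}\simeq H_1(X)_{tor}=\ZZ/5$ and $\Tor_1^{\ZZ}(\ZZ/5,\ZZ/5)=\ZZ/5$) is identical.
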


\begin{proof}
For a $K3$ surface $X$, the Albanese variety is trivial. So $C_s(X)=NS(Alb(X))$ has rank zero. So $a$  solves to $22-NS(X)$.

Now $\PP^2_\CC$ and $\FF_n$ are simply-connected and have trivial Albanese variety. Further for $\PP^2$ we have $b_2=1$, $b_1=0$, $NS(X)=1$ while for $\FF_n$ we have $b_2=2$, $b_1=0$ and $NS(X)=2$. 

For an abelian surface, since $H^n(X,\ZZ)= \wedge^n H^1(X,\ZZ)$, so the torsion part of the Brauer group is trivial. Now $b_1=4$, so $b_2= {4 \choose 2}=6$. Further the Albanese of $X$ is itself. So we get $b=6+16-NS(X)-NS(X)=22-2NS(X)$.

 Since a del Pezzo surface is birational to $\PP^2$, so it is simply-connected. Hence the torsion part of the Brauer group is trivial and $H_1(X)=0$. Thus $b_1=0$. Further the Albanese variety is trivial. Finally $b_2=NS(X)$ since $H^i(X,\cO_X)=0$ for $i>0$. Hence we get $b_2+b_1^2- rank NS(X)-rank C_s(X)=0$.


Barlow \cite{barlow}, \cite[Lee,Park]{leepark} and \cite[Park,Park,Shin]{pps} have constructed examples of simply-connected surfaces of general type with $p_g=0$. It is easy to see that for such surfaces $X$, we have $Br(X^{[2]})=0$.

A Godeaux surface is the quotient of the Fermat surface by the cyclic group of order $5$. For a Godeaux surface or Catanese surface, we have $q=p_g=0$. Thus $Alb(X)$ is trivial and $b_2-\rank NS(X)=0$. Further their fundamental group is $\ZZ/5\ZZ$. Thus $b_1=0$. So the divisible subgroup of the  Brauer group is trivial.  Therefore we get $H^3(X,\ZZ)_{tor}$ which by Poincar\'e duality equals $H_1(X)_{tor}=\ZZ/5\ZZ$. Also $ Tor_1^{\ZZ}(H^2(X,\ZZ),H^2(X,\ZZ))=\ZZ/5$.
\end{proof}


\section{Case $d \geq 3$} \label{dgeq3} In this section, $char(k) \neq 2$. Further, by a series of reductions we show how the
case $d=2$ generalizes to $d \geq 3$. Our first reduction is from $X^{[d]}$ to a certain open subset $X^{[d]}_*$.

Recall (\ref{hilbtochow}) the Hilbert to Chow morphism  \begin{equation} \label{hilbertchow} \omega_d: X^{[d]} \ra X^{(d)}\end{equation}defined by $Z \ms \sum_{p \in Supp(Z)} l(\cO_{Z,p}) [p]$ where $p$ is a closed point of $X$. The scheme $X^{(d)}$ admits a stratification by locally closed subschemes parametrized by unordered partitions $<n_1,\cdots,n_r>$ where $r \geq 1$ and $\sum n_i =d$. Since $dim(X)=2$, so the dimension of such locus of points is $2r$ and over them the dimension of the fiber of $\omega_n$ is $d-r$ by \cite{iarrobino}. Let $W_1 \subset X^{(d)}$ denote the open subset consisting of all distinct points i.e $<1,\cdots,1>$ repeated $d$-times and $W_2 \hra X^{(d)}$ be the locally closed subscheme consisting of exactly two identical points i.e $<2,1,\cdots,1>$ where there are $d-2$ ones. Therefore $dim W_1= dim X^{(d)}$ is $2d$ and $dim W_2$ is $2d-2$ and the remaining stratas have strictly smaller dimensions. 

Set \begin{eqnarray} \label{technical} 
X^{(d)}_* & := & W_1 \cup W_2 \\
X^{[d]}_* & := & \omega_d^{-1}(X^{(d)}_*) \\
V_1       & := & \omega_d^{-1}(W_1) \\
\label{v2} V_2 & := & \omega_d^{-1}(W_2).
\end{eqnarray}  
 
 We remark firstly that $X^{[d]}$ is smooth (cf \cite{fogarty}).  The dimension of the inverse image under $\omega_d$ of any strata given by partition $<n_1,\cdots,n_r>$ equals $2r+d-r$. So its codimension equals
 \begin{equation} 2d -(2r+d-r)=d-r.
 \end{equation} Further for the inverse image of  any strata other than $W_1$ and $W_2$, we have $d-r \geq 2$. Thus the complement of $X^{[d]}_*$ in $X^{[d]}$ has codimension two. It is well-known that the natural restriction map for
 \begin{equation} 
 Br(X^{[d]}) \ra Br(X^{[d]}_*)
 \end{equation}
  is an isomorphism.

Consider the natural morphism\begin{equation} \omega_d: X^{[d]}_* \ra X^{(d)}_*. \end{equation} We now quote three facts that will be useful in analysing the morphism $\omega_d$. Let \begin{equation} \label{singlocusgen} X^{(d)}_s\end{equation} denote the singular locus of $X^{(d)}$. We have \begin{equation} \label{singloc} X^{(d)}_* \cap X^{(d)}_s=W_2.\end{equation}By \cite[Lemma 4.4]{fogarty}  the blowup of $X^{(d)}_*$ along its singular locus is $X^{[d]}_*$ i.e\begin{equation} \label{blowup}X^{[d]}_* = Bl_{W_2} X^{(d)}_*.\end{equation} When $char(k) \neq 2$, for any $q \in W_2$ the schematic fiber of $\omega_n^{-1}(q)$ is the reduced scheme $\PP^1_{k}$ by \cite[ \S 3.3 and Prop 3.3.3]{ps} (cf also \cite[page 668 Lemma 4.3]{fogarty}).

Before we proceed with the actual proof, we wish to reinterpret the scheme $V_2$ as the projectivization of a bundle on $W_2$. 

\begin{prop} The scheme $V_2 \ra W_2$ is the projectivization of a rank two vector bundle on $W_2$.
\end{prop}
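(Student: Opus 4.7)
The plan is to identify $V_2$ with $\PP(\pi_1^* \Omega_X)$, where $\pi_1 : W_2 \to X$ is the first projection in a natural identification of $W_2$ with an open subscheme of $X \times X^{(d-2)}$. Explicitly, $W_2 \simeq \{(p,D) \in X \times X^{(d-2)} : D \text{ reduced}, p \notin \supp(D)\}$: it is the image under $X^d \to X^{(d)}$ of the strict partial diagonal $\{(p,p,q_1,\ldots,q_{d-2}) : \text{all entries distinct}\}$, whose $S_d$-stabilizer is $S_2 \times S_{d-2}$ with $S_2$ acting trivially. Since $\Omega_X$ has rank two, so does $\pi_1^* \Omega_X$, so $\PP(\pi_1^*\Omega_X) \to W_2$ is the desired projectivization of a rank-two bundle.

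I would then construct a morphism $\Phi : \PP(\pi_1^* \Omega_X) \to V_2$ over $W_2$ using the functor of points of the Hilbert scheme. A $T$-point of $\PP(\pi_1^* \Omega_X)$ consists of a morphism $f : T \to W_2$ together with a rank-one quotient of $(\pi_1 \circ f)^* \Omega_X$. By the $d=2$ case, diagram (\ref{diag2}) and the embedding $\PP(\Omega_X) \hookrightarrow X^{[2]}$ turn this quotient into a flat length-two closed subscheme $\cZ_1 \subset T \times X$ supported on the graph of $g := \pi_1 \circ f$. Composing $f$ with the second projection $W_2 \to X^{(d-2)}$ lands inside the open locus of reduced divisors, over which $X^{[d-2]} \to X^{(d-2)}$ is an isomorphism; pulling back the tautological reduced divisor supplies a flat length-$(d-2)$ closed subscheme $\cZ_2 \subset T \times X$. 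The defining condition of $W_2$ forces $\cZ_1$ and $\cZ_2$ to have disjoint support, so $\cZ_1 \sqcup \cZ_2$ is a flat length-$d$ family on $T$, yielding a $T$-point of $X^{[d]}$ whose Hilbert--Chow image lies in $W_2$; this defines $\Phi$ with image in $V_2$.

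To see that $\Phi$ is an isomorphism, I would argue fiberwise. The morphism $V_2 \to W_2$ is flat with $\PP^1$ fibers: $V_2$ is a Cartier divisor in the smooth scheme $X^{[d]}_*$, being the exceptional divisor of $X^{[d]}_* = \Bl_{W_2} X^{(d)}_*$, hence Cohen--Macaulay of dimension $2d-1$; its fibers over $W_2$ are reduced $\PP^1$'s by the cited results of Fogarty and Paul--Sebastian; and $W_2$ is smooth of dimension $2d-2$; so miracle flatness applies. The source $\PP(\pi_1^* \Omega_X) \to W_2$ is manifestly a $\PP^1$-bundle. On the geometric fiber over $(p,D) \in W_2$, $\Phi$ restricts to the isomorphism $\PP(\Omega_{X,p}) \xrightarrow{\sim} \omega_2^{-1}(2p)$ of (\ref{diag2}) composed with the bijection $Z_p \mapsto Z_p \sqcup D$; in particular, $\Phi$ is an isomorphism on every fiber. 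The fiberwise isomorphism criterion for proper flat families then yields that $\Phi$ is an isomorphism.

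The main technical point I anticipate is the scheme-theoretic construction of $\Phi$, especially the verification that $\cZ_1$ and $\cZ_2$ have disjoint supports as subschemes of $T \times X$ in families. An alternative approach that bypasses the universal property is to argue \'etale-locally on $W_2$: around a point $2p + q_1 + \cdots + q_{d-2}$, \'etale-local product decompositions of $X^{(d)}_*$ and $X^{[d]}_*$ factor off an $X^{(2)}$ (resp. $X^{[2]}$) piece around $2p$, reducing the claim to the $d=2$ assertion $V_2 \simeq \PP(\Omega_X)$ of diagram (\ref{diag2}); this avoids the functorial manipulation at the cost of writing down the decomposition carefully with its $S_2 \times S_{d-2}$ stabilizer data.
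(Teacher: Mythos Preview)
Your proposal is correct and follows essentially the same idea as the paper: both identify $V_2$ with the pullback $W_2 \times_X \PP(\Omega_X)$ along the natural projection $W_2 \to X$ to the doubled point, reducing to the $d=2$ case via the decomposition of a length-$d$ scheme in $V_2$ into its length-$2$ piece at the doubled point and its reduced length-$(d-2)$ tail.

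The execution differs in direction and in rigor. The paper defines the map the other way, $\theta: V_2 \to X^{[2]}$, by sending the ideal $(m_1,\dots,m_{d-2},I')$ to $\cO_X/I'$, draws the resulting commutative diagram with the $d=2$ square (\ref{diag2}), and declares all squares cartesian; this is done on closed points and the cartesian property is asserted rather than proved. You instead build $\Phi: \PP(\pi_1^*\Omega_X) \to V_2$ via the Hilbert functor and then verify it is an isomorphism using miracle flatness on $V_2 \to W_2$ together with a fiberwise check. Your route is more careful, and the flatness argument you give (Cartier divisor in a smooth scheme, hence Cohen--Macaulay, over a smooth base with equidimensional reduced fibers) is a clean way to justify what the paper leaves implicit. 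The \'etale-local alternative you sketch at the end is also sound and is in spirit closest to what the paper's diagram (\ref{parmudiag}) is encoding.
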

\begin{proof} Consider the locally closed subscheme $L$ of $X^{\times d}$ where exactly two coordinates are equal. The scheme $L$ has components parametrized by unordered pairs $\{(i,j)| 1 \leq i \neq j \leq n\}$ i.e $(i,j)  =(j,i)$. The component $X^{\times d}_{ij}$ of $L$ has equal $i$th and $j$th coordinates. By the definition of $L$, the intersection of any two components is empty. There is a natural  action of the symmetric group $S_d$ on $d$ letters on $L$ that respects the inclusion $L \hra X^{\times d}$. It acts on the set of components transitively. Further the following diagram is commutative
\begin{equation}
\xymatrix{
L \ar@{^{(}->}[r] \ar[d] & X^{\times d} \ar[d] \\
W_2 \ar@{^{(}->}[r] & X^{(d)}
}
\end{equation}
where the vertical morphisms are quotients under $S_d$-action. Here the component $X^{\times d}_{ij}$ maps isomorphically onto $W_2$.

Let $p_{ij}: X^{\times d}_{ij} \ra X$ denote the natural projection map onto the $i$-th (or the $j$-th) factor. These glue to give a  natural map $\pi': L \ra X$ which is $S_d$ invariant. Thus we get a projection map 
\begin{equation}
\pi: W_2 \ra X.
\end{equation}
Consider the rank two bundle $W_2 \times_X \Omega_X$ on $W_2$. Recall $V_2$ from (\ref{v2}). We claim that we have a natural isomorphism of $\PP^1$ bundles on $W_2$
\begin{equation}
V_2=\PP(W_2 \times_X \Omega_X).
\end{equation} 
To see this, let us first define a natural morphism
\begin{equation} \theta: V_2 \ra X^{[2]},
\end{equation} as follows: a point $v \in V_2 \subset X^{[d]}$ corresponds to an ideal subsheaf $I$ of $\cO_X$. By the definition of $V_2$, this can be written as the ideal genearted by ideals
\begin{equation} I=(m_1, \cdots, m_{d-2}, I')
\end{equation} where the $m_i$ are maximal ideals corresponing to the distinct points of multiplicity one and $I'$ is an ideal subsheaf of $\cO_X$ of colength two. Define $\theta$ as the morphism that sends $v$ to the point 
\begin{equation} \cO_X/I' \in X^{[2]}.
\end{equation}
So we get a factorization
\begin{equation} \label{parmudiag}
\xymatrix{
X^{[2]} \ar[d] & \PP(\Omega_X) \ar[d] \ar@{_{(}->}[l] & V_2 \ar[d] \ar@{.>}[l] \ar[r] \ar@/_1pc/[ll] & X^{[d]}_* \ar[d] \\
X^{(2)} & X \ar@{_{(}->}[l] & W_2 \ar[l] \ar[r] & X^{(d)}_*
}
\end{equation}
  This shows the claim and the proposition.
\end{proof}
We remark that all squares in (\ref{parmudiag}) are cartesian.
The following proposition reduces the case $d \geq 3$ to $d=2$.

\begin{thm} \label{geq3} Let $char(k) \neq 2$. The map $\omega_d: X^{[d]}_* \ra X^{(d)}_*$ induces an isomorphism on cohomological Brauer groups.
\end{thm}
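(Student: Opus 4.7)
The plan is to run the argument of Theorem \ref{anychar} verbatim, with $\omega_d : X^{[d]}_* \to X^{(d)}_*$ replacing $\omega_2 : X^{[2]} \to X^{(2)}$, using that the fiber geometry of $\omega_d$ over $X^{(d)}_*$ is exactly the same as that of $\omega_2$: above the open stratum $W_1$ the map is an isomorphism, and above the singular locus $W_2$ the map is the $\mathbb{P}^1$-bundle $V_2=\mathbb{P}(W_2\times_X\Omega_X)\to W_2$ (cf.\ (\ref{blowup}) and (\ref{parmudiag})). Let $i:W_2\hookrightarrow X^{(d)}_*$ and $j:W_1\hookrightarrow X^{(d)}_*$ denote the inclusions, and let $\pi:V_2\to W_2$ denote the $\mathbb{P}^1$-bundle.

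First, I would establish the direct image identities $\omega_{d,*}\GG_m\simeq \GG_m$ and $\omega_{d,*}\mu_n\simeq\mu_n$ on the \'etale site of $X^{(d)}_*$, by the same mapping-cylinder/decomposition argument as in Proposition \ref{pushforwardbyomega2}: the restriction along $j$ is an isomorphism because $\omega_d^{-1}(W_1)\to W_1$ is one, and along $i$ we have $i^*\omega_{d,*}\GG_m(V)=\Gamma(\mathbb{P}(V\times_X\Omega_X))^\times=\Gamma(V)^\times$ and similarly for $\mu_n$. Next, by proper base change and the decomposition theorem, the sheaves $R^q\omega_{d,*}\mu_n$ for $q>0$ are supported on $W_2$ and are isomorphic to $i_*R^q\pi_*\mu_n$. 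Since each fiber of $\pi$ is $\mathbb{P}^1$, the Kummer sequence on $\mathbb{P}^1$ gives $R^1\pi_*\mu_n=0$ and $R^2\pi_*\mu_n=\underline{\ZZ/n}$ canonically (the generator on each fiber being the class of $\cO_{\mathbb{P}^1}(1)$, compatible over overlaps because $\pi$ is a $\mathbb{P}^1$-bundle). Thus $R^1\omega_{d,*}\mu_n=0$ and $H^0(X^{(d)}_*,R^2\omega_{d,*}\mu_n)=H^0(W_2,\underline{\ZZ/n})=\ZZ/n$ (using that $W_2$ is connected).

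I would then feed these inputs into the Leray spectral sequence for $\omega_d$ with coefficients in $\mu_n$, exactly as in (\ref{seventerm})--(\ref{uptomun}): the seven-term exact sequence collapses because $E^{0,1}_2=E^{1,1}_2=0$, yielding
\begin{equation*}
0\to H^2(X^{(d)}_*,\mu_n)\to H^2(X^{[d]}_*,\mu_n)\to \ZZ/n
\end{equation*}
and the surjectivity of the last arrow has to be proved by exhibiting a class in $H^2(X^{[d]}_*,\mu_n)$ restricting to the canonical generator of each fiber. Here the natural candidate is the class of the exceptional divisor of the blowup $X^{[d]}_*=\Bl_{W_2}X^{(d)}_*$, namely $V_2\hookrightarrow X^{[d]}_*$, which on every fiber $\pi^{-1}(q)\simeq\mathbb{P}^1$ restricts to $\cO_{\mathbb{P}^1}(-1)$ (up to sign), hence to the generator of $\Pic(\mathbb{P}^1)/n$. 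Running the same intermediate diagram chase involving the Kummer sequence and the sheaf $R^1\omega_{d,*}\GG_m$ (where multiplication by $n$ on the stalk $\ZZ$ explains why the kernel of $\delta$ has no $n$-torsion), I get the short exact sequence
\begin{equation*}
0\to H^2(X^{(d)}_*,\mu_n)\to H^2(X^{[d]}_*,\mu_n)\to \ZZ/n\to 0.
\end{equation*}

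To pass from $\mu_n$ to $\GG_m$ via the Kummer sequence (\ref{formun}), I need the analogue of (\ref{nsmodn}), namely a splitting $\Pic(X^{[d]}_*)=\Pic(X^{(d)}_*)\oplus\ZZ$ with the $\ZZ$-factor spanned by $[V_2]$. Since $X^{[d]}_*\subset X^{[d]}$ has complement of codimension $\geq 2$ in a smooth scheme, restriction gives $\Pic(X^{[d]})\stackrel{\sim}{\to}\Pic(X^{[d]}_*)$; the analogous statement $\Pic(X^{(d)})\stackrel{\sim}{\to}\Pic(X^{(d)}_*)$ follows on the normal scheme $X^{(d)}$ from the fact that $X^{(d)}\setminus X^{(d)}_*$ is the union of the strata of partition type $\langle n_1,\dots,n_r\rangle$ with $r\leq d-2$, which have codimension $\geq 4$ in $X^{(d)}$. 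Combined with Proposition \ref{A1}(\ref{3}), this yields the desired splitting, where the new $\ZZ$ factor corresponds precisely to $[V_2]$. The same diagram chase as at the end of Theorem \ref{anychar} then shows that $Br'(X^{(d)}_*)_{\ell^m}\to Br'(X^{[d]}_*)_{\ell^m}$ is an isomorphism for every $m\geq 1$, whence the claim.

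The main obstacle I anticipate is this last Picard-group comparison: while $\Pic(X^{[d]})=\Pic(X^{[d]}_*)$ is immediate from smoothness and codimension, the identification $\Pic(X^{(d)})=\Pic(X^{(d)}_*)$ requires a careful codimension count on the singular symmetric product together with the normality of $X^{(d)}$ to conclude that Weil/Cartier divisors extend across the deeper strata; once this is in place, all the pieces assemble exactly as in the $d=2$ proof.
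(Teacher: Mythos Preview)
Your proposal is correct and follows essentially the same approach as the paper: the paper's own proof simply asserts that the diagram (\ref{parmudiag}) plays the role of (\ref{diag2}), that Proposition~\ref{A1} holds for all $d\geq 2$ with the $\ZZ$-factor coming from the exceptional divisor $V_2$, and that the proofs of Proposition~\ref{pushforwardbyomega2} and Theorem~\ref{anychar} go through verbatim because the fibers are points or $\PP^1$. You have unpacked this more carefully, and in particular you correctly isolate the one step the paper leaves implicit---the passage $\Pic(X^{(d)})\simeq\Pic(X^{(d)}_*)$ via normality and codimension---which is exactly the right thing to flag.
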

\begin{proof} The diagram (\ref{parmudiag}) plays the role of 
  the diagram (\ref{diag2}) i.e $V_2=\PP(W_2 \times_X \Omega_X)$, $X^{[d]}_*$, $X^{(d)}_*$ and $W_2$ play the roles of $\PP(\Omega_X)$, $X^{[2]}$, $X^{(2)}$ and $X$ respectively. Further $S_d$ plays the role of $S_2$. Now  Proposition \ref{A1}  holds for any $d \geq 2$.  For any $d$ the copy of $\ZZ$ in (\ref{picxd}) comes from the divisor corresponding to $\omega_d^{-1}(X^{(d)}_*)$, which is $V_2$ in our  notation (cf \cite[Theorem 6.2]{fogarty}).  Alongwith  this last fact, the only properties we used in 
the proofs of Propositions  \ref{pushforwardbyomega2}, \ref{onemoreh2} and \ref{H_3tor} were that the fibers are points or the projective line. So this shows that
 the proofs of Theorem \ref{anychar} and \ref{mtcomplex} generalize. 
\end{proof}

\section{Cohomological Brauer group of punctual Quot scheme $Q(r,d)$} As in the introduction, let $Q(r,d)$ denote the punctual Quot scheme parametrizing torsion quotients of $\cO_X^{\oplus r}$ of degree $d$ supported on zero dimensional subschemes. We have the diagonal action of $\GG_m^r$ on $\cO_X^{\oplus r}$. Since $Q(r,d)$ is projective,   we have the Bialynicki-Birula decomposition of $Q(r,d)$ with respect to the diagonal embedding $\GG_m \ra \GG_m^r$ for the above action. Although $Q(r,d)$ may not be smooth, the fixed point locus and the stratas may be described as in \cite[Thm 1]{bifet} as follows. We begin with the description of fixed points. The connected components of fixed points  are parametrized  by partitions $\ul{P}=d_1 + \cdots + d_r$  of $d$ into $r$-many non-negative integers $d_i$. Let us set
\begin{equation}  \label{fixedpoints}
X^{\ul{P}} := \prod_{i=1, \cdots r} X^{[d_i]},
\end{equation}
as a fibered product of Hilbert schemes with the convention that $X^{[0]}=\Spec(k)$.
The strata $S^{\ul{P}}$  corresponding to the partition $\ul{P}$ is a vector bundle over $X^{\ul{P}}$ 
\begin{equation} \rho: S^{\ul{P}} \ra X^{\ul{P}}
\end{equation}  of rank $d_{\ul{P}}= \sum_{i <j} d_{ij}( \ul{I})$ where $\ul{I}=(I_1,\cdots,I_r)$ is any closed point of $X^{\ul{P}}$ and
\begin{equation}
d_{ij}(\ul{I})=dim_k Hom_{\cO_X}(I_i,\cO_X/I_j).
\end{equation} 
Let $\GG_m=\Spec k[t,t^{-1}]$ and $s \in S^{\ul{P}}$. Under a $\GG_m$-action given by $\ul{P}$ the morphism $\rho$ on $S^{\ul{P}}$ may be described as 
\begin{equation}
\rho(s)=lim_{t \ra 0} t. s.
\end{equation}
 
An analogue of the following proposition is proved in \cite{bifet} for the case when $X$ is a curve. We provide the additional arguments for the case $X$ is a surface because we need to ascertain the unique strata of largest dimension corresponds to the partition $\ul{P}:=(0,\cdots,0,d)$ of $d$ into $r$ non-negative parts.

 \begin{prop} Let $\ul{P}=(d_1,\cdots,d_r)$ be a partition where $d_1 \leq d_2 \leq \cdots \leq d_r$. Then the dimension of $S^{\ul{P}}$ is $\sum_{j=1}^r (j-1) d_j$. 
 \end{prop}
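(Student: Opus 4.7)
The plan is to compute $d_{\ul P}$ directly via a generic-point calculation. Since $S^{\ul P}$ is by construction a vector bundle over $X^{\ul P} = \prod_i X^{[d_i]}$ whose rank at $\ul I = (I_1,\ldots,I_r)$ equals $d_{\ul P}(\ul I) = \sum_{i<j} \dim_k \Hom_{\cO_X}(I_i, \cO_X/I_j)$, it suffices to evaluate this sum at a conveniently chosen closed point and identify the answer with $\sum_j (j-1) d_j$.

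First, I would restrict attention to the dense open $U \subseteq X^{\ul P}$ on which the supports of the subschemes cut out by $I_1, \ldots, I_r$ are pairwise disjoint. Openness is clear; density follows because for each pair $i \neq j$ the failure locus maps under Hilbert--Chow into a proper closed diagonal-type subvariety of $X^{(d_i)} \times X^{(d_j)}$. Since $d_{\ul P}(\ul I)$ is by hypothesis the rank of a vector bundle, it is locally constant, so its value on $U$ is its value everywhere.

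Second, for any $\ul I \in U$ and any $i \neq j$, apply $\Hom_{\cO_X}(-, \cO_X/I_j)$ to the short exact sequence $0 \to I_i \to \cO_X \to \cO_X/I_i \to 0$ to obtain
\begin{equation*}
0 \to \Hom(\cO_X/I_i, \cO_X/I_j) \to \Hom(\cO_X, \cO_X/I_j) \to \Hom(I_i, \cO_X/I_j) \to \Ext^1(\cO_X/I_i, \cO_X/I_j).
\end{equation*}
Because $\cO_X/I_i$ and $\cO_X/I_j$ are finite-length torsion sheaves with disjoint supports, the sheaves $\mathcal{H}om$ and $\mathcal{E}xt^1$ vanish stalkwise (at any $x\in X$ at most one of the two stalks is nonzero), and the local-to-global $\Ext$ spectral sequence then kills the outer global terms. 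Therefore
\begin{equation*}
d_{ij}(\ul I) = \dim_k \Hom(\cO_X, \cO_X/I_j) = \dim_k H^0(X, \cO_X/I_j) = d_j.
\end{equation*}
Notice the answer depends only on $j$, not on $i$.

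Third, summing: for each fixed $j$ there are exactly $j-1$ indices $i < j$, each contributing $d_{ij}(\ul I) = d_j$, so
\begin{equation*}
d_{\ul P} = \sum_{1 \leq i < j \leq r} d_{ij}(\ul I) = \sum_{j=1}^r (j-1)\, d_j,
\end{equation*}
which is the formula in the statement. The ordering hypothesis $d_1 \leq \cdots \leq d_r$ plays no role in this count; it is merely a normalization used elsewhere (e.g.\ to single out the top stratum $(0,\ldots,0,d)$).

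The delicate point is the vanishing of $\Ext^1(\cO_X/I_i, \cO_X/I_j)$. The stalk picture--finite-length modules over $\cO_{X,x}$ with at most one nonzero factor at any $x$--gives $\mathcal{E}xt^1 = 0$ transparently; upgrading to vanishing of global $\Ext^1$ uses either the local-to-global $\Ext$ spectral sequence or the direct observation that a zero-dimensional coherent sheaf decomposes as the direct sum of its skyscraper components at the support, so that $\Hom$ and $\Ext^\bullet$ between two such sheaves split over common supporting points and are zero when supports are disjoint. Everything else is either standard (Hilbert--Chow genericity, constancy of vector-bundle rank) or a routine combinatorial count.
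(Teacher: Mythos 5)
Your proposal is correct and follows essentially the same route as the paper: evaluate the rank at a point where the supports of the $I_j$ are pairwise disjoint, apply $\Hom_{\cO_X}(-,\cO_X/I_j)$ to $0 \to I_i \to \cO_X \to \cO_X/I_i \to 0$, and use vanishing of $\Hom$ and $\Ext^1$ between finite-length sheaves with disjoint supports to get $d_{ij}(\ul{I}) = d_j$. Your writeup is in fact slightly more careful than the paper's on two points the paper leaves implicit, namely why a single generic evaluation suffices (local constancy of the rank of a vector bundle) and why the global $\Ext^1$ vanishes (local-to-global spectral sequence or the skyscraper decomposition).
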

 \begin{proof} Since we may choose any point $\ul{I}$ in $X^{\ul{P}}$, let us choose a point where each $I_i$ corresponds to $d_i$ many distinct points and further the supports of $I_i$ are also all distinct. Let the subscheme $I_i$ correspond to the sheaf of ideals $J_i \subset \cO_X$. So we have the exact sequence
 \begin{equation}
 0 \ra J_i \ra \cO_X \ra \cO_{I_i} \ra 0.
 \end{equation}
 Now applying $Hom_{\cO_X}(?,\cO_{I_j})$ we get
 \begin{equation*}
 0 \ra Hom(\cO_{I_i},\cO_{I_j}) \ra Hom(\cO_X,\cO_{I_j}) \ra Hom(J_i,\cO_{I_j}) \ra Ext^1_{\cO_X}(\cO_{I_i},\cO_{I_j}) \ra
 \end{equation*}
 Now $Hom(\cO_{I_i},\cO_{I_j})=0$ and $Ext^1_{\cO_X}(\cO_{I_i},\cO_{I_j})=0$ because the supports of $I_i$ and $I_j$ are distinct. Thus $Hom(\cO_X,\cO_{I_j}) \ra Hom(J_i,\cO_{I_j})$ is an isomorphism. This shows that $d_{ij}(\ul{I})=dim Hom(J_i,\cO_{I_j})=d_j$ since $\cO_{I_j}$ has length $d_j$. Now the result follows from $d_{\ul{P}}= \sum_{i <j} d_{ij}( \ul{I})$.
  
 \end{proof}
 Considering dimensions by (\ref{fixedpoints}) there is a  unique strata of largest dimension. It is thus open and given by 
 \begin{equation}
 X^{\ul{D}}=X^{[0]} \times X^{[0]} \times \cdots \times X^{[d]}= X^{[d]}.
 \end{equation}
 
 The following proposition is inspired from \cite[Remark 6.3]{bidhhu}.
 
 \begin{thm} The morphism $\phi: Q(r,d) \ra X^{[d]}$ (cf \ref{quothilb}) induces an isomorphism on cohomological Brauer groups.
 \end{thm}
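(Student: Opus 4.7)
The plan is to exploit the Bialynicki-Birula decomposition set up in the preceding paragraphs, together with the observation that $\phi$ is $\GG_m$-invariant. For the $\GG_m$-action on $\cO_X^{\oplus r}$ via the chosen one-parameter subgroup with weights $(a_1, \ldots, a_r)$, if $q: \cO_X^{\oplus r} \ra S$ is a quotient and $t \in \GG_m$, then $\ker(t \cdot q)$ differs from $\ker(q)$ by rescaling the $i$-th coordinate by $t^{a_i}$. Its $r$-th exterior power, viewed as an ideal of $\cO_X$, is then multiplied by the unit $t^{\sum a_i}$, and so is unchanged. Hence $\phi$ is $\GG_m$-invariant and factors through the Bialynicki-Birula retraction $\rho: Q(r,d) \ra \bigsqcup_{\ul{P}} X^{\ul{P}}$, followed by the ``ideal-product'' map $X^{\ul{P}} = \prod_i X^{[d_i]} \ra X^{[d]}$, $(I_1, \ldots, I_r) \ms I_1 \cdots I_r$.

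On the unique open stratum $S^{\ul{D}}$ corresponding to $\ul{D}=(0,\ldots,0,d)$, we have $X^{\ul{D}} = X^{[d]}$ and the ideal-product map is the identity, so $\phi|_{S^{\ul{D}}}$ coincides with the vector-bundle projection $\rho: S^{\ul{D}} \ra X^{[d]}$ of rank $(r-1)d$. For any $n = \ell^m$, the vector-bundle structure yields $\rho_* \mu_n = \mu_n$ and $R^q \rho_* \mu_n = 0$ for $q \geq 1$, so the Leray spectral sequence, combined with $\Pic(S^{\ul{D}}) = \Pic(X^{[d]})$ and the Kummer sequence, gives an isomorphism $\phi^*: Br'(X^{[d]}) \sr{\sim}{\ra} Br'(S^{\ul{D}})$.

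It then remains to show that the open inclusion $j: S^{\ul{D}} \hra Q(r,d)$ induces an isomorphism $j^*: Br'(Q(r,d)) \sr{\sim}{\ra} Br'(S^{\ul{D}})$. The complement $Z := Q(r,d) \setminus S^{\ul{D}}$ is a union of the BB strata $S^{\ul{P}}$ with $\ul{P} \neq \ul{D}$, each an affine bundle over a smooth product $X^{\ul{P}} = \prod_i X^{[d_i]}$. Stratifying $Z$ by decreasing dimension and applying localization long exact sequences for \'etale cohomology with $\mu_n$-coefficients, strata of codimension $\geq 2$ contribute nothing to $H^2(Q(r,d),\mu_n)$ for degree reasons, while codimension-one strata (for instance $\ul{P} = (0,\ldots,0,1,d-1)$, where $\sum_j (r-j) d_j = 1$) contribute only divisor classes, which lie in $\Pic(Q(r,d)) \otimes \ZZ/n$ and hence vanish in $Br'(Q(r,d))$ by the Kummer sequence. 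The principal obstacle here is that $Q(r,d)$ may be singular, so the absolute purity theorem is not immediately available; following the method of \cite{bidhhu}, I would instead exploit the $\GG_m$-equivariance of $\phi$ and the iterated affine-bundle structure of the BB filtration to identify the boundary contributions to $H^2(Q(r,d), \mu_n)$ directly with classes of Picard type coming from the irreducible components of $Z$. Combining this with the open-stratum isomorphism then yields the desired isomorphism $Br'(X^{[d]}) \sr{\sim}{\ra} Br'(Q(r,d))$ induced by $\phi$.
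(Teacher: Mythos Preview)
Your first two paragraphs are fine: you correctly identify that $\phi$ is $\GG_m$-invariant, that $\phi|_{S^{\ul{D}}}$ coincides with the affine-bundle projection $\rho:S^{\ul{D}}\ra X^{[d]}$, and hence that $\phi^*:Br'(X^{[d]})\ra Br'(S^{\ul{D}})$ is an isomorphism. This matches the paper.

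The gap is in your third paragraph. You try to prove that the open inclusion $i:S^{\ul{D}}\hra Q(r,d)$ induces an isomorphism on $Br'$ by stratifying the complement and running localization sequences in $\mu_n$-cohomology. As you yourself note, $Q(r,d)$ is in general singular, so purity is unavailable, and your sketch of ``identifying the boundary contributions with classes of Picard type'' is not an argument: you would need to control $H^2_Z(Q(r,d),\mu_n)$ for closed strata $Z$ inside a singular ambient scheme, and nothing you have written does that. This part of the proposal is not a proof.

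The paper avoids this entirely by exploiting an observation that you have essentially already made but do not use: the zero section $s:X^{[d]}\ra S^{\ul{D}}$ composed with $i$ gives a \emph{section of $\phi$}, i.e.\ $\phi\circ i\circ s=\mathrm{id}_{X^{[d]}}$. Hence on Brauer groups $(i\circ s)^*\circ\phi^*=\mathrm{id}$, so $\phi^*$ is split injective and $s^*\circ i^*$ is surjective. Since $s^*$ is an isomorphism (affine bundle), $i^*$ is surjective. On the other hand $i^*$ is injective simply because $i$ is the inclusion of a nonempty open subset (Milne, \emph{\'Etale Cohomology}, IV, Cor.~2.6); no purity or stratification analysis is needed. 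Thus $i^*$ is an isomorphism, and then $\phi^*=(i^*)^{-1}(s^*)^{-1}$ is an isomorphism. Replace your third paragraph with this section argument and the proof is complete.
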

  \begin{proof}
 We have a Zariski local fibration $\rho: S^{\ul{P}} \ra X^{\ul{P}}=X^{[d]}$ with fibers isomorphic to affine spaces.  So it induces isomorphism on cohomological Brauer groups. 
 Let $s: X^{\ul{D}} \ra S^{\ul{D}}$ denote the zero section. It induces an isomorphism on cohomological Brauer groups because $\rho$ does. It sends a quotient $q: \cO_X \ra S$ to the quotient \begin{equation} (0, \cdots,0,q): \cO_X^{r-1} \oplus \cO_{X} \ra S 
 \end{equation}
in $S^{\ul{P}}$. Set $\ul{D}=(0,\cdots,0,d)$. Consider the diagram
 \begin{equation} \label{referee}
 \xymatrix{
 S^{\ul{D}} \ar@{^{(}->}[r]^{i}  & Q(r,d) \ar[ld]^{\phi} \\
X^{[d]}= X^{\ul{D}} \ar[u]^s&
 } 
 \end{equation}
 Thus $i \circ s$ is a section of  $\phi$. Hence $(i \circ s)^* \circ \phi^*=id_{Br'(X^{[d]})}$ in the diagram below
 \begin{equation}
 \xymatrix{
 Br'(S^{\ul{D}})  \ar[d]^{s^*} & Br'(Q(r,d)) \ar@{_{(}->}[l]_{i^*}  \\
 Br'(X^{[d]})  \ar[ur]_{\phi^*} &
 }
 \end{equation}
 Thus $s^* \circ i^*$ is surjective. So $i^*$ is surjective since $s^*$ is an isomorphism. But it is also
 injective because $i$ is the inclusion of a non-empty open set (cf \cite[IV, Corollary 2.6]{milne}). Thus $i^*$ is an isomorphism.  Now $\phi^*$ equals $ (i^*)^{-1}(s^*)^{-1}$,so it is also an isomorphism.
 
 \end{proof}

\bibliographystyle{plain}
\bibliography{brauerquot}

\def\cprime{$'$} \def\cprime{$'$}
\begin{thebibliography}{10}

\bibitem{sga7}
{\em Groupes de monodromie en g\'{e}om\'{e}trie alg\'{e}brique. {I}}.
\newblock Lecture Notes in Mathematics, Vol. 288. Springer-Verlag, Berlin-New
  York, 1972.
\newblock S\'{e}minaire de G\'{e}om\'{e}trie Alg\'{e}brique du Bois-Marie
  1967--1969 (SGA 7 I), Dirig\'{e} par A. Grothendieck. Avec la collaboration
  de M. Raynaud et D. S. Rim.

\bibitem{SGA4}
Jean-Louis~Verdier Artin, Michael; Alexandre~Grothendieck.
\newblock S\'eminaire de {G}\'eom\'etrie {A}lg\'ebrique du {B}ois {M}arie -
  1963-64 - {T}h\'eorie des topos et cohomologie \'etale des sch\'emas - ({SGA
  4}) - vol. 3.
\newblock {\em Lecture notes in mathematics (in French) 305. Berlin; New York:
  Springer Verlag.}

\bibitem{artinmumford}
M.~Artin and D.~Mumford.
\newblock Some elementary examples of unirational varieties which are not
  rational.
\newblock {\em Proc. London Math. Soc. (3)}, 25:75--95, 1972.

\bibitem{barlow}
Rebecca Barlow.
\newblock A simply connected surface of general type with {$p_g=0$}.
\newblock {\em Invent. Math.}, 79(2):293--301, 1985.

\bibitem{bifet}
Emili Bifet.
\newblock Sur les points fixes du sch\'{e}ma {${\rm Quot}_{{\cO}^r_X/X/k}$}
  sous l'action du tore {${\bf G}^r_{m,k}$}.
\newblock {\em C. R. Acad. Sci. Paris S\'{e}r. I Math.}, 309(9):609--612, 1989.

\bibitem{bidhhu}
Indranil Biswas, Ajneet Dhillon, and Jacques Hurtubise.
\newblock Brauer groups of {Q}uot schemes.
\newblock {\em Michigan Math. J.}, 64(3):493--508, 2015.

\bibitem{biswasholla}
Indranil Biswas and Yogish~I. Holla.
\newblock Brauer group of moduli of principal bundles over a curve.
\newblock {\em J. Reine Angew. Math.}, 677:225--249, 2013.

\bibitem{bostcharles}
Jean-Beno\^{\i}t Bost and Fran\c{c}ois Charles.
\newblock Some remarks concerning the {G}rothendieck period conjecture.
\newblock {\em J. Reine Angew. Math.}, 714:175--208, 2016.

\bibitem{carrell}
James~B. Carrell and Andrew~John Sommese.
\newblock Some topological aspects of {${\bf C}^{\ast} $} actions on compact
  {K}aehler manifolds.
\newblock {\em Comment. Math. Helv.}, 54(4):567--582, 1979.

\bibitem{catanese1981}
F.~Catanese.
\newblock Babbage's conjecture, contact of surfaces, symmetric determinantal
  varieties and applications.
\newblock {\em Inventiones mathematicae}, 63(3):433--465, Oct 1981.

\bibitem{dj}
A.~J de~Jong.
\newblock A result of gabber.
\newblock {\em http://www.math.columbia.edu/∼dejong/papers/2-gabber.pdf.}

\bibitem{dold}
Albrecht Dold.
\newblock Homology of symmetric products and other functors of complexes.
\newblock {\em Ann. of Math. (2)}, 68:54--80, 1958.

\bibitem{fogarty}
J.~Fogarty.
\newblock Algebraic families on an algebraic surface. {II}. {T}he {P}icard
  scheme of the punctual {H}ilbert scheme.
\newblock {\em Amer. J. Math.}, 95:660--687, 1973.

\bibitem{fultonbook}
William Fulton.
\newblock {\em Intersection theory}, volume~2 of {\em Ergebnisse der Mathematik
  und ihrer Grenzgebiete. 3. Folge. A Series of Modern Surveys in Mathematics
  [Results in Mathematics and Related Areas. 3rd Series. A Series of Modern
  Surveys in Mathematics]}.
\newblock Springer-Verlag, Berlin, second edition, 1998.

\bibitem{grothendieckbrauerii}
Alexander Grothendieck.
\newblock Le groupe de {B}rauer. {II}. {T}h\'{e}orie cohomologique [
  {MR}0244270 (39 \#5586b)].
\newblock In {\em S\'{e}minaire {B}ourbaki, {V}ol. 9}, pages Exp. No. 297,
  287--307. Soc. Math. France, Paris, 1995.

\bibitem{hart}
Robin Hartshorne.
\newblock {\em Algebraic geometry}.
\newblock Springer-Verlag, New York-Heidelberg, 1977.
\newblock Graduate Texts in Mathematics, No. 52.

\bibitem{iarrobino}
A.~Iarrobino.
\newblock Punctual {H}ilbert schemes.
\newblock {\em Bull. Amer. Math. Soc.}, 78:819--823, 1972.

\bibitem{knop}
Friedrich Knop, Hanspeter Kraft, and Thierry Vust.
\newblock {\em The Picard Group of a G-Variety}, pages 77--87.
\newblock Birkh{\"a}user Basel, Basel, 1989.

\bibitem{lmb}
G\'{e}rard Laumon and Laurent Moret-Bailly.
\newblock {\em Champs alg\'{e}briques}, volume~39 of {\em Ergebnisse der
  Mathematik und ihrer Grenzgebiete. 3. Folge. A Series of Modern Surveys in
  Mathematics [Results in Mathematics and Related Areas. 3rd Series. A Series
  of Modern Surveys in Mathematics]}.
\newblock Springer-Verlag, Berlin, 2000.

\bibitem{leepark}
Yongnam Lee and Jongil Park.
\newblock A simply connected surface of general type with {$p_g=0$} and
  {$K^2=2$}.
\newblock {\em Invent. Math.}, 170(3):483--505, 2007.

\bibitem{maclane}
Saunders MacLane.
\newblock {\em Homology}.
\newblock Springer-Verlag, Berlin-New York, first edition, 1967.
\newblock Die Grundlehren der mathematischen Wissenschaften, Band 114.

\bibitem{merkur}
A.~S. Merkur{\cprime}ev.
\newblock A seven-term sequence in the {G}alois theory of schemes.
\newblock {\em Mat. Sb. (N.S.)}, 109(151)(3):395--409, 479, 1979.

\bibitem{milgram}
R.~James Milgram.
\newblock The homology of symmetric products.
\newblock {\em Trans. Amer. Math. Soc.}, 138:251--265, 1969.

\bibitem{milne}
James~S. Milne.
\newblock {\em \'Etale cohomology}, volume~33 of {\em Princeton Mathematical
  Series}.
\newblock Princeton University Press, Princeton, N.J., 1980.

\bibitem{milneLEC}
James~S. Milne.
\newblock Lectures on etale cohomology (v2.21), 2013.
\newblock Available at www.jmilne.org/math/.

\bibitem{mumfordablvar}
David Mumford.
\newblock {\em Abelian varieties}, volume~5 of {\em Tata Institute of
  Fundamental Research Studies in Mathematics}.
\newblock Published for the Tata Institute of Fundamental Research, Bombay; by
  Hindustan Book Agency, New Delhi, 2008.
\newblock With appendices by C. P. Ramanujam and Yuri Manin, Corrected reprint
  of the second (1974) edition.

\bibitem{olsson}
Martin Olsson.
\newblock Sheaves on {A}rtin stacks.
\newblock {\em J. Reine Angew. Math.}, 603:55--112, 2007.

\bibitem{me}
Yashonidhi Pandey.
\newblock Principal {$N$}-bundles for {$N$} an extension of a finite group by
  an abelian group.
\newblock {\em Comm. Algebra}, 39(4):1168--1180, 2011.

\bibitem{pps}
Heesang Park, Jongil Park, and Dongsoo Shin.
\newblock A simply connected surface of general type with {$p_g=0$} and
  {$K^2=3$}.
\newblock {\em Geom. Topol.}, 13(2):743--767, 2009.

\bibitem{ps}
Arjun Paul and Ronnie Sebastian.
\newblock Fundamental group schemes of hilbert scheme of $n$ points on a smooth
  projective surface.
\newblock {\em arXiv:1907.04290v1}.

\bibitem{schroer}
Stefan Schr\"{o}er.
\newblock Topological methods for complex-analytic {B}rauer groups.
\newblock {\em Topology}, 44(5):875--894, 2005.

\bibitem{steenrod}
Norman~E. Steenrod.
\newblock Cohomology operations, and obstructions to extending continuous
  functions.
\newblock {\em Advances in Math.}, 8:371--416, 1972.

\bibitem{tamme}
G\"unter Tamme.
\newblock {\em Introduction to \'etale cohomology}.
\newblock Universitext. Springer-Verlag, Berlin, 1994.
\newblock Translated from the German by Manfred Kolster.

\bibitem{totaro}
Burt Totaro.
\newblock The integral cohomology of the {H}ilbert scheme of two points.
\newblock {\em Forum Math. Sigma}, 4:e8, 20, 2016.

\end{thebibliography}
 \end{document}